\title{PDE constrained shape optimisation with first-order and Newton-type methods in the $W^{1,\infty}$ topology}
\author{Klaus Deckelnick\textsuperscript{a},
Philip J. Herbert\textsuperscript{b}\thanks{Contact P.J.~Herbert. Email: p.herbert@sussex.ac.uk},
and
Michael Hinze\textsuperscript{c}
\affil{
\textsuperscript{a}Otto-von-Guericke-University Magdeburg, Institut f\"ur Analysis und Numerik, Universit\"atsplatz 2, 39106 Magdeburg;
\textsuperscript{b}Department of Mathematics, University of Sussex, Brighton, BN1 9RF, United Kingdom;
\textsuperscript{c}Mathematical Institute, University of Koblenz, Universitätsstr. 1, D-56070 Koblenz
}
}
\date{\today}
\DeclareMathOperator{\iden}{I}
\newcommand{\A}{\mathcal{A}}
\newcommand{\detDashDash}{\mathbb{D}}
\newcommand{\ADashDash}{\mathbb{A}}
\newtheorem{theorem}{Theorem}
\newtheorem{algorithm}[theorem]{Algorithm}
\newcommand{\dee}{{\rm d}}
\newcommand{\R}{\mathbb{R}}
\DeclareMathOperator{\Div}{div}
\DeclareMathOperator*{\argmin}{arg\,min}
\DeclareMathOperator{\Tr}{Tr}
\DeclareMathOperator{\id}{id}
\begin{document}
\maketitle
\begin{abstract}
    We present a general shape optimisation framework based on the method of mappings in the $W^{1,\infty}$ topology.
    We propose steepest descent and Newton-like minimisation algorithms for the numerical solution of the respective shape optimisation problems.
    Our work is built upon previous work of the authors in \hyperlink{cite.0@DecHerHin21}{Deckelnick, Herbert, and Hinze, ESAIM: COCV 28 (2022)}, where a $W^{1,\infty}$ framework for star-shaped domains is proposed.
    To illustrate our approach we present a selection of PDE constrained shape optimisation problems and compare our findings to results from so far classical Hilbert space methods and recent $p$-approximations.
\end{abstract}
\begin{keywords}
{PDE constrained shape optimisation, Lipschitz functions, $W^{1,\infty}$-descent}
\end{keywords}


\section{Introduction}
This work considers the use of vector-valued Lipschitz continuous functions in the task of optimising shapes.
The use of Lipschitz functions for shape optimisations is not necessarily new, having appeared in \cite{DecHerHin21} in a limited setting of  star-shaped domains.
While it may be possible to formulate many problems in a star-shaped setting, it lacks the natural formulation which is useful to practitioners, therefore making uptake in the community less likely.
Recent work which aimed to approximate this Lipschitz approach used a $p$-Laplacian.
Of course one is interested in the limit $p \to \infty$, however it can prove troublesome to utilise significantly large $p$ in computation.
Our approach will not require the use of degenerate elliptic operators, but remain appropriate for the implementation by practitioners and will appear in upcoming work.
This article will not concern itself with an extensive analysis of the convergence of the proposed algorithms, but will focus on the ease of implementation and examples.
A particular novelty which we consider is the use of second order data in this Lipschitz setting for which we demonstrate an effective algorithm for its implementation in this shape context.
An analysis for a model problem with a first order method is presented within \cite{DecHerHin23-NA}.
The second order method we present, or some variety of it would be very interesting to analyse.

We are interested in the numerical solutions of a number of shape optimisation problems
\begin{equation}
    \min \mathcal{J}(\Omega),\, \Omega \in \mathcal{S},
\end{equation}
where $\mathcal{S}$ is a collection of admissible domains.
This collection and the functional $\mathcal{J}$ will vary depending on the application.
To find, at least local, minima of this problem, we will consider a descent method.
By this, we mean that, given $\Omega\in \mathcal{S}$, we seek $V^* \colon \R^n \to \R^n$ such that $\mathcal{J}'(\Omega)[V^*]<0$ and set $\Omega_{\rm new} = ({\rm id} + \alpha V^*)(\Omega)$ for some suitably chosen $\alpha>0$.
To ensure that the map $ \id + \alpha V^*$ is a homeomorphism, it is sufficient to restrict to $\alpha$ to be small enough that $\alpha |DV^*|<1$ a.e., where, { $|\cdot|$ is pointwise the spectral (operator) norm.
While it is sufficient to take any sub-multiplicative norm, the spectral norm is convenient as it relates the Lipschitz and $W^{1,\infty}$ semi-norms.}

In the literature, it is common to seek $V^*$ in a Hilbert space $H$ which represents the negative gradient i.e.
\begin{equation}\label{eq:RieszGradient}
    (V^*,\eta)_H
    = (-\nabla_H \mathcal{J}(\Omega),\eta)_H
    := -\mathcal{J}'(\Omega)[\eta]
\end{equation}
for all $\eta \in H$, or equivalently, one might seek
\begin{equation}\label{eq:MinInHilbertSpace}
    V^* \in \argmin \left\{ \frac{1}{2}\|V\|_H^2 + \mathcal{J}'(\Omega)[V] : V \in H \right\}.
\end{equation}
A crucial issue in this context is the regularity of the solution $V^*$ to problem \eqref{eq:MinInHilbertSpace}, which strongly depends on the regularity of the current domain $\Omega$ as well as the choice of $H$.
It for example is not clear whether $\id+\alpha V^*$ defines a Lipschitz transformation for many frequent choices of $H$. In order to avoid these issues it was
suggested in \cite{DecHerHin21} to work directly in the space $W^{1,\infty}(\Omega,\mathbb R^d)$ and to consider the following problem
\begin{equation}\label{eq:generalMinimisationForDirection}
    V^* \in \argmin \left\{  \mathcal{J}'(\Omega)[V] :  V \in W^{1,\infty}(\Omega,\mathbb R^d), | D V | \leq 1 \mbox{ a.e. in } \Omega \right\}.
\end{equation}
In \cite{DecHerHin21} this idea was analysed and implemented for shape optimisation problems involving  star-shaped domains.
It is the purpose of this
paper to extend this approach to more general domains including the use of Newton--type methods.

\subsection*{Literature}
There continues to be rapid development in the mathematical and numerical analysis of shape optimisation.
The seminal works of {\sc Delfour and Zol\'esio} \cite{DelZol11}, {\sc Sokolowski and Zol\'esio}, and the recent overview by {\sc Allaire, Dapogny, and Jouve} \cite{ADJ21} and the comprehensive bibliographies within provide an extensive overview of the topic of shape optimisation.
The analysis, both mathematical and numerical, of shape optimisation problems has an extensive history, see e.g.~\cite{BFLS97,GM94,MS76,S80}.
While computational power has increased in recent years, it has encouraged further development of shape optimisation \cite{SSW15,SSW16,SW17}, particularly fluid dynamical applications \cite{BenCarGui15,FLUU17,GHHK15,GHKL18,RolCutPey16,HUU20,HSU21,KMHR19,SISG13}.
Many articles have considered different choices of inner products on Hilbert spaces.
A variety of choices are presented in \cite{HPS15}.
One particularly interesting example is \cite{IglSturWec18} which uses a penalty to weakly enforce the Cauchy-Riemann equations however it only appears applicable in two dimensions.
Another category of interesting choices are reproducing kernel Hilbert spaces \cite{EigStu18}, which for certain kernels, one may provide an explicit shape gradient.
While in a Hilbertian setting, the work \cite{OnySie21} considers non-smooth terms to ensure that a mesh does not become degenerate.
Some methods very much target having a particularly good mesh, a particular example is the so-called pre-shape calculus \cite{LufSch21-A,LufSch21-B}.

The utilisation of Banach spaces for shape optimisation is gathering attention.
To the best of our knowledge this was introduced in \cite{DecHerHin21} and considered $W^{1,\infty}$ perturbations for a star-shaped setting.
The direction of steepest descent in a star-shaped setting has been linked to optimal transport \cite{Her23}.
The star-shaped setting is frequently exploited \cite{EHS07,BouChSa20} to allow for a deeper analysis at the expense of generality.
A $p$-approximation to the infinity problem \eqref{eq:generalMinimisationForDirection}
is utilised in \cite{MulKulSie21} to optimise a fluid dynamic problem using a $p$-Laplace relaxation.
Such a fluid problem is frequently discussed in shape optimisation as it is known \cite{Pin74} that, for Stokes flow, the optimal shape should have a tip.
In \cite{MulKulSie21}, experiments demonstrate that the $p$-method will form a tip as opposed to in more classical Hilbertian methods.
The article \cite{MulPinRun22} develops upon \cite{MulKulSie21} to consider the computational scalability of a method closely related to a $p$-Laplace relaxation of \eqref{eq:generalMinimisationForDirection}.

Higher order methods are also of interest and will be considered in this work; second order methods have been considered in \cite{SchSch22}, utilising a so-called linear version of the second shape derivative.

\subsection*{Outline}
We begin in Section \ref{sec:Preliminaries} by outlining some necessary definitions and results for shape optimisation, mentioning the Lagrange approach from optimisation to write down first and second derivatives and providing examples which we will consider.
We then move onto a discussion about the discretisation of the infinity method in Section \ref{sec:Discretisation}.
Section \ref{sec:Applications} then provides numerical experiments of the previously described numerical experiments using the novel $W^{1,\infty}$ method we discuss.

\section{Shape derivatives and Lagrangian calculus}\label{sec:Preliminaries}

\subsection{Preliminaries}
In what follows we denote by $D \subset \mathbb R^d$ a convex hold-all domain. We
consider the shape optimisation problem
\begin{equation}
    \min \mathcal{J}(\Omega),\, \Omega \in \mathcal{S},
\end{equation}
where $\mathcal{S}$ is a collection of admissible domains such that $\Omega \Subset D$ for all
$\Omega \in \mathcal S$.
Here, we use the symbol $\Subset$ to denote compactly contained.
It is not difficult to see that $\id + V$ is a bi-Lipschitz transformation from
$D$ to $D$ provided that $V \in W_0^{1,\infty}(D,\mathbb R^d)$ with $\Vert DV \Vert_{L^\infty} <1$.
Assuming that $(\id + V)(\Omega) \in \mathcal S$ for such $V$ we say that $\mathcal J$ is shape
differentiable at $\Omega$ if  (cf.~\cite[Definition 4.1]{ADJ21}) 
$ V \mapsto \mathcal J\bigl( (\id+V)(\Omega) \bigr)$ is Fr\'{e}chet--differentiable at $V=0$ as a mapping
from $W_0^{1,\infty}(D,\mathbb R^d)$ into $\mathbb R$. An update step in a descent algorithm based on the Fr\'{e}chet derivative of
$\mathcal J$ will then seek a direction $V \in W_0^{1,\infty}(D,\mathbb R^d)$ such that
$\mathcal J'(\Omega)[V] <0$. In order to determine the direction of steepest descent  we are led to the problem of finding
$V^* \in W_0^{1,\infty}(D,\mathbb R^d)$ with
\begin{equation} \label{eq:descent1}
  V^* \in \argmin \left\{  \mathcal{J}'(\Omega)[V] :  V \in W_0^{1,\infty}(D,\mathbb R^d), | D V | \leq 1
    \mbox{ a.e. in } D \right\}.
\end{equation}
Let us note that we are including the hold-all domain within this minimisation problem for the determination of a direction of steepest descent, along with a Dirichlet boundary condition on the boundary of the hold-all domain.
Note that the fact that $D$ is convex ensures that $V$ is a Lipschitz continuous function with Lipschitz constant $1$.
Using the direction \eqref{eq:descent1} within a descent algorithm hence requires the solution of a highly nontrivial constrained
minimisation problem which can be {approximated} at the discrete level with the help of an alternating direction method of multipliers (ADMM).

The above approach will lead to a first order method.
{If $\mathcal{J}$ is twice shape differentiable,} it is worthwhile considering
a Newton--type approach as well. This can be achieved by replacing the minimisation problem \eqref{eq:descent1} by
\begin{equation}\label{eq:NewtonProblem}
    \min \left\{\frac{t}{2} \mathcal{J}''(\Omega)[V,V] + \mathcal{J}'(\Omega)[V] : V \in W_0^{1,\infty}(D;\R^d),\, |DV| \leq 1 \mbox{ a.e. in } D \right\},
\end{equation}
where $t>0$ may be interpreted as a damping factor.
Here, the evaluation of $\mathcal J''(\Omega)$ is by no means straightforward and we will use the Lagrangian calculus described in the next subsection
to carry out the calculations for the class of problems that we are interested in.
Our motivation for the formulation of \eqref{eq:NewtonProblem} is the following approximation: for $t \in (-1,1)$ and $V \in W_0^{1,\infty}(D;\R^d)$ with $|DV| \leq 1$ a.e. in $D$ it holds that
\begin{equation}\label{eq:NewtonApproximation}
    \mathcal{J} ( (\id + tV )(\Omega)) = \mathcal{J} (\Omega) + t \mathcal{J}'(\Omega)[V] + \frac{t^2}{2} \mathcal{J}''(\Omega)[V,V] + \mbox{higher order terms}.
\end{equation}
Throwing away these higher order terms and minimising over the admissible $V$, we recover \eqref{eq:NewtonProblem}.

Under appropriate conditions, one may show the existence of solutions of \eqref{eq:descent1} and \eqref{eq:NewtonProblem}, the approach being similar to results presented in \cite{PaWeFa18} and \cite{DecHerHin21}.
In Theorem \ref{thm:ContinuousMinimisers} we verify the existence of solutions with some reasonable assumptions.
In a discrete setting, the assumptions given for the continuous case are trivially satisfied.

\subsection{Lagrangian framework for PDE--constrained optimisation} \label{pdeconstrained}
For the ease of exposition, let us consider a shape functional of the form
\begin{equation} \label{defmathcalj}
\mathcal J(\Omega)= \int_{\Omega} j(\cdot,y_{\Omega}) \, \dee x,
\end{equation}
where $j\colon D \times \mathbb R \rightarrow \mathbb R$ is assumed to be sufficiently smooth and $y_{\Omega}$ denotes
the solution of a PDE posed in $\Omega$.
Let us note that one may also consider the gradient of the solution in the functional $\mathcal J$ which follows very similarly, but adds a layer of complexity to the already large formulae.
We shall adapt the Lagrangian framework developed in Sections 1.6.4 and 1.6.5 of
\cite{HinPinUlb08} in order to compute $\mathcal J'(\hat \Omega)$ and $\mathcal J''(\hat \Omega)$ at a {\it fixed} domain
$\hat \Omega \Subset D$. The main aspect of the Lagriangian method is to, in effect, decouple the state, $y_\Omega$, from, in the setting we consider, the shape, $\Omega$.
Denoting by $B$ a small open neighbourhood of $0$ in $W_0^{1,\infty}(D,\mathbb R^d)$ we associate with $V \in B$ the
perturbed domain $\Omega_V:= (\id+V)(\hat \Omega)$. By transforming to $\hat \Omega$ we find that, for the choice made in \eqref{defmathcalj}, $\mathcal J(\Omega_V)=
J(V,y_{\Omega_V} \circ (\id+V))$, where
\begin{equation}\label{eq:CostFunctionalJ}
    J(V,y) := \int_{\hat \Omega} j(\id + V, y) \det( \iden + DV) \, \dee \hat{x},
  \end{equation}
and we note that $|\det( \iden + D V)| = \det( \iden + D V)$ if $|DV|$ is  sufficiently small.
The derivatives of this choice of $J$ may be found in Appendix \ref{app:JDerivatives}.
In order to incorporate the PDE constraint we let $y=y_{\Omega_V} \circ (\id+V)$ and suppose that $y_{\Omega_V}$ solves the given
PDE problem on $\Omega_V$ if and only if $e(V,y)=0$ for some mapping
$e\colon B \times X \rightarrow Z$. Here,  $X$, $Z$ are suitable function spaces on $\hat \Omega$ and we assume in what follows
that  $e_y(0,\hat y)\colon X \rightarrow Z$ is invertible, where $\hat y=y_{\hat \Omega}$. After choosing $B$ smaller if necessary to apply an Implicit Function Theorem,
there exists for every $V \in B$ a unique $y=y(V) \in X$ such that $e(V,y(V))=0$, so that we may write
\begin{displaymath}
  \mathcal J(\Omega_V)=J(V,y(V))
\end{displaymath}
where, in the context of optimal control, the map $V \mapsto \mathcal{J}(\Omega_V)$ takes the role of a reduced cost functional.
In order to calculate the derivatives of $\mathcal{J}$ it is convenient to introduce the
Lagrange functional $L \colon X \times B \times Z^* \to \R$
\begin{equation}
    L(y,V,p) = {J(V,y)} + \langle p, e(V,y) \rangle,
\end{equation}
so that
\begin{displaymath}
    \mathcal J(\Omega_V) = L(y(V),V,p) \quad \mbox{ for any } p \in Z^*.
\end{displaymath}
If we denote by $p(V)$ the solution of $L_y(y(V),V,p(V))=0$, one immediately obtains that 
\begin{equation}\label{eq:FirstShapeDerivativeLagrangian}
    \mathcal{J}'(\hat \Omega)[V]
    = L_V(\hat y,0,\hat p)[V],
\end{equation}
where $\hat p=p(0)$. In a similar way one finds for the second derivative
\begin{eqnarray}
  \mathcal{J}''(\hat \Omega)[V,W]
  & = & 
        L_{yy}(\hat y,0, \hat p)[y'(0)[V], y'(0)[W]] + L_{yV}(\hat y,0, \hat p)[V,y'(0)[W]] \nonumber \\
  &  & +  L_{Vy}(\hat y,0,\hat p)[y'(0)[V],W]  + L_{VV}(\hat y,0,\hat p)[V,W], \nonumber
\end{eqnarray}
where $y'(0)[V]$ is the derivative of $W \mapsto y(W)$ at $W=0$ in direction $V \in W^{1,\infty}_0(D,\mathbb R^d)$,
which satisfies
\begin{equation} \label{yV}
   \langle p, e_y(0,\hat y) [y'(0)[V]] \rangle = -\langle p, e_V(0,\hat y)[V] \rangle, \quad \mbox{ for all } p \in Z^*.
\end{equation}
{For the implementation of the Newton-like method in \eqref{eq:NewtonProblem}, it is necessary to evaluate $\mathcal{J}''(\hat \Omega)[V,W]$ many times.}
In order to carry out the corresponding calculations as efficiently as possible we would like to avoid the frequent evaluation of
$y'(0)[W]$. To do, let us write
\begin{displaymath}
    \mathcal{J}''(\hat \Omega)[V,W] = \langle h_1, y'(0)[W] \rangle + \langle h_2, W \rangle,
\end{displaymath}
where
\begin{eqnarray*}
  \langle h_1,y\rangle & = & L_{yy}(\hat y,0,\hat p)[y'(0)[V], y] + L_{yV}(\hat y,0,\hat p)[V,y], \\
  \langle h_2,W \rangle & = & L_{Vy}(\hat y,0,\hat p)[y'(0)[V],W]  + L_{VV}(\hat y,0,\hat p)[V,W].
\end{eqnarray*}
We then first define $g \in Z^*$ as the solution of
\begin{equation} \label{eqadj}
  \langle g, e_y(0,\hat y) [y] \rangle = \langle h_1,y \rangle \quad \forall y \in X
\end{equation}
and then set
\begin{displaymath}
  \langle h_3,W \rangle = - \langle g,e_V(0,\hat y) [W] \rangle, \quad W \in W_0^{1,\infty}(D,\mathbb R^d).
\end{displaymath}
This gives
\begin{eqnarray*}
  \mathcal{J}''(\hat \Omega)[V,W]
  & = &  \langle h_1, y'(0)[W] \rangle + \langle h_2, W \rangle  =  \langle g, e_y(0,\hat y)[y'(0)[W]] \rangle + \langle h_2,W \rangle \\
  & = &  - \langle g,e_V(0,\hat y)[W] \rangle   + \langle h_2,W \rangle  =  \langle h_2+h_3,W \rangle.
\end{eqnarray*}
The evaluation of $\mathcal{J}''(\hat{\Omega})[V,\cdot]$ hence essentially requires the solutions of \eqref{yV} and of the adjoint problem \eqref{eqadj}.

\subsection{Existence of descent-like directions}
Now it is demonstrated how to construct the first and second derivatives, we demonstrate the well-posedness of the problem given in \eqref{eq:NewtonProblem}.
The result is very similar to those which appear in \cite{PaWeFa18} and \cite{DecHerHin21}
\begin{theorem}\label{thm:ContinuousMinimisers}
    Given $t\geq 0$, suppose that the maps
    \begin{align}
        V \mapsto& \mathcal J'(\Omega)[V] \mbox{ and}
        \\
        V\mapsto& t \mathcal J ''(\Omega)[V,V]
    \end{align}
    are weak-$*$ lower-semi-continuous, then there exists
    \begin{equation}
        V^* \in \argmin \left\{ \frac{t}{2}\mathcal{J}''(\hat \Omega)[V,V] +  \mathcal{J}'(\hat \Omega)[V] :  V \in W^{1,\infty}_0(D;\R^d),\, | D V | \leq 1 \right\}.
    \end{equation}
\end{theorem}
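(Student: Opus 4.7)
The plan is to prove this by the standard direct method of the calculus of variations. Define the admissible set
\[
\mathcal{A} := \{V \in W^{1,\infty}_0(D;\R^d) : |DV|\leq 1 \text{ a.e.\ in } D\}
\]
and the objective $F(V) := \frac{t}{2}\mathcal{J}''(\hat\Omega)[V,V] + \mathcal{J}'(\hat\Omega)[V]$. First I would observe that $\mathcal{A}$ is nonempty (it contains $V=0$) and bounded in $W^{1,\infty}_0(D;\R^d)$: the constraint forces $\|DV\|_{L^\infty}\le 1$, and since $D$ is bounded and $V=0$ on $\partial D$, Poincar\'e's inequality yields a uniform bound on $\|V\|_{L^\infty}$ as well. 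Because $\mathcal{J}'(\hat\Omega)$ is a bounded linear form and $\mathcal{J}''(\hat\Omega)$ a bounded bilinear form on $W^{1,\infty}_0(D;\R^d)$, $F$ is bounded on the bounded set $\mathcal{A}$, and in particular $\inf_{\mathcal{A}} F > -\infty$.

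Next I would take a minimising sequence $(V_n)\subset\mathcal{A}$. Since $(V_n)$ is bounded in $W^{1,\infty}_0(D;\R^d)$ and $L^\infty$ is the dual of the separable space $L^1$, the sequential Banach--Alaoglu theorem lets me extract a subsequence (not relabelled) with $V_n \rightharpoonup^* V^*$ and $DV_n \rightharpoonup^* DV^*$ in $L^\infty$ for some $V^*$. The key closedness step is to verify $V^*\in\mathcal{A}$. The pointwise constraint $|DV^*|\leq 1$ a.e.\ is inherited because the closed unit ball of $L^\infty(D;\R^{d\times d})$ (in the pointwise spectral norm) is weak-$*$ closed: it is the intersection, over nonnegative $\varphi\in L^1(D)$ with $\int\varphi\leq 1$ and unit vectors $\xi,\eta\in\R^d$, of the weak-$*$ closed half-spaces $\{A:\int \varphi\, \xi^\top A\eta \leq 1\}$. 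The boundary condition $V^*=0$ on $\partial D$ passes to the limit because $(V_n)$ is equi-Lipschitz and vanishes on $\partial D$, so by Arzel\`a--Ascoli a subsequence converges uniformly on $\overline D$, forcing $V^*\in C_0(\overline D)$.

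Finally, applying the assumed weak-$*$ lower semi-continuity of $V\mapsto \mathcal{J}'(\hat\Omega)[V]$ and $V\mapsto t\mathcal{J}''(\hat\Omega)[V,V]$ along the extracted subsequence gives
\[
F(V^*) \;\leq\; \liminf_{n\to\infty} F(V_n) \;=\; \inf_{\mathcal{A}} F,
\]
and since $V^*\in\mathcal{A}$ we conclude $V^*$ is a minimiser. The main obstacle I anticipate is the weak-$*$ closedness of $\mathcal{A}$: on the one hand the pointwise a.e.\ spectral-norm bound on $DV$ has to be read as a genuine weak-$*$ closed convex condition rather than just an a.e.\ inequality, and on the other hand the homogeneous Dirichlet condition on $\partial D$ is not directly encoded in a weak-$*$ limit and must be recovered via an Arzel\`a--Ascoli argument using that the sequence is uniformly Lipschitz. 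Everything else (lower bound on $F$, existence of a weak-$*$ limit, semi-continuity) is essentially routine once those two facts are in place.
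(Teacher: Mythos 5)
Your proposal is correct and follows essentially the same route as the paper's own proof: the direct method with an infimising sequence, weak-$*$ compactness of the constraint set in $W^{1,\infty}_0(D;\R^d)$, and the assumed weak-$*$ lower semi-continuity of the two maps. You simply supply more detail than the paper does (finiteness of the infimum, weak-$*$ closedness of the pointwise gradient constraint, and recovery of the boundary condition), all of which is sound.
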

\begin{proof}
    Consider a sequence $\{V_n\}_{n=1}^\infty \subset \{ V \in W^{1,\infty}_0(D;\R^d) : |DV|\leq 1\}$ such that
    \begin{multline}
        \frac{t}{2}\mathcal{J}''(\hat \Omega)[V_n,V_n] + \mathcal{J}'(\hat \Omega)[V_n] \searrow
        \\
        \searrow \inf \left\{ \frac{t}{2}\mathcal{J}''(\hat \Omega)[V,V] +  \mathcal{J}'(\hat \Omega)[V] : V \in W^{1,\infty}_0(D;\R^d) ,\, |DV|\leq 1\right\}\mbox{ as }n \nearrow \infty.  
    \end{multline}
    It holds that $V_n$ is bounded, hence there is a weak-$*$ convergent subsequence, say $\{V_{n_k}\}_{k=1}^\infty$, and limit $V^*\in W^{1,\infty}_0 (D;\R^d)$ such that $|DV_h^*| \leq 1$ a.e. in $D$ and $V_{n_k}\overset{*}{\rightharpoonup} V^*$ in $W^{1,\infty}_0(D;\R^d)$.
    Furthermore, by the assumed weak-$*$ lower-semi-continuity of $t\mathcal{J}''(\hat \Omega)[V,V]$ and $\mathcal{J}'(\hat \Omega)[V]$, we have that $V^*$ is a minimiser.
\end{proof}
Let us briefly discuss the assumptions above.
The condition that $V \mapsto \mathcal{J}'(\Omega)[V]$ is weak-$*$ lower-semi-continuous is typically verified in practice by observing that there is $g_0 \in L^1(\Omega;\R^d)$ and $g_1 \in L^1(\Omega;\R^{d \times d})$ such that
\begin{equation}
    \mathcal J'(\Omega)[V] = \int_\Omega \left( g_0 \cdot V + g_1 : DV \right) \dee x,
\end{equation}
which provides weak-$*$ continuity.
The second condition is more tricky.
As we have seen in Section \ref{pdeconstrained} that the construction of the second derivative is non-trivial, as such the condition is less easy to explicitly check.
However it is the case that, if the mapping $V\mapsto\mathcal{J}''(\Omega)[V,V]$ non-negative, then it holds that it is weak-$*$ lower semi-continuous.
This certainly depends on the shape optimisation problem at hand.
However it is perhaps not too unreasonable to assume that, near a minimiser, the second derivative of the energy is non-negative. 

\section{Example shape optimisation problems}
Here, we now discuss a few example shape optimisation problems.
\subsection{Poisson problem}
As a first PDE constraint we here consider the Poisson problem.
We set $X= H^1_0(\hat{\Omega})$ and $Z = X^*$.
Since we are in a reflexive setting, we use the canonical injection and identify $Z^*$ with $X$.
By $y_{\Omega}$ we denote
the solution of
\begin{equation} \label{poisson}
- \Delta y_{\Omega} =F  \mbox{ in }  {\Omega}, \quad y_{\Omega} = 0 \mbox{ on } \partial {\Omega}
\end{equation}
for a given $F \in L^{2}(D)$.
In particular, we find that $y_{\Omega_V} \circ (\id +V)$ is a solution of $e(V,y)=0$ where $e\colon B \times H^1_0(\hat \Omega) \rightarrow H^{-1}(\hat \Omega)$ is given by
\begin{equation} \label{defe1}
\langle e(V,y), p \rangle = \int_{\hat \Omega} \left( A(V) \nabla y \cdot \nabla p -  F \circ (\id +V) p \right)   \det(\iden + DV)
 \, \dee \hat x, \quad p \in H^1_0(\hat \Omega),
\end{equation}
and  $A(V):= (\iden + DV)^{-1} (\iden + DV)^{-T}$.
{Derivatives of the map $e$ may be found in Appendix \ref{app:Poisson:Derivatives}.}
With the  Lagrange functional 
\begin{displaymath}
L(V,y,p) = \int_{\hat \Omega} j(\id + V,y) \det(\iden + DV) \,  \dee \hat x + \langle e(V,y), p \rangle 
\end{displaymath}
we deduce from \eqref{eq:FirstShapeDerivativeLagrangian} the well--known formula
\begin{eqnarray}
\mathcal J'(\hat \Omega)[V] & = & L_V(\hat y,0,\hat p)[V] \label{firstderivpoi}
\\
& = &  \int_{\hat \Omega} \left( j(\cdot,\hat y) \Div V + j_x(\cdot,\hat y)\cdot V   + \A[V]\nabla \hat y \cdot \nabla \hat p - \hat p \Div(F V) \right) d \hat x, \nonumber
\end{eqnarray}
where
\begin{equation}
    \A[V] := \iden \Div V - DV - DV^T,
\end{equation}
$\hat{y} \in H^1_0(\hat{\Omega})$ satisfies $e(0,\hat{y})=0$, and the adjoint $\hat p \in H^1_0(\hat \Omega)$ satisfies $L_y(\hat y,0,\hat p)=0$, i.e.
\begin{equation}
    \int_{\hat \Omega} \nabla\hat  p \cdot \nabla \eta \, \dee \hat x = -\int_{\hat \Omega} j_y(\cdot,\hat y) \eta \, \dee \hat x \quad \mbox{ for all } \eta \in H^1_0(\hat \Omega).
\end{equation}

\subsection{Bi-Laplace-type equation}
Let us next consider the minimsation of $\mathcal J$ as in \eqref{defmathcalj} subject to the linear PDE of fourth order
\begin{equation} \label{bilaplace}
 \Delta^2 y_\Omega =F  \mbox{ in }  \Omega, \quad y_\Omega = \Delta y_{\Omega}=0  \mbox{ on } \partial \Omega.
\end{equation}
If the boundary of $\Omega$ is sufficiently regular the above problem can be split into two second order Poisson problems by introducing $-\Delta y_\Omega$ as an additional variable.
Let us note that this splitting is analytically useful to ensure that the shape derivative exists in the sense of \cite[Definition 4.1]{ADJ21}, due to the fourth order nature of the problem.
Let us comment that this need not be necessary since the shape differentiability, particularly boundedness in Lipschitz functions, with a fourth order constraint was demonstrated in \cite{EllHer21} in a surface context, while \cite{Las17} shows this for a fourth order eigenvalue problem.

On the fixed domain, we set $X = \left(H^1_0(\hat{\Omega})\right)^2$ and $Z = X^*$.
Again we will use the canonical injection to identify $Z^*$ with $X$.
Posing the split formulation of \eqref{bilaplace} on $\Omega_V$ and transforming it back onto $\hat \Omega$ in the same way as above we write the map $e$ which represents the PDE constraint as,
\begin{eqnarray}
   \langle e(V,y ),p \rangle & = &    \int_{\hat \Omega} \bigl( A(V)  \nabla y_1 \cdot \nabla p_2 -  y_2 p_2  \bigr) \det(\iden +DV) \, \dee \hat x  \nonumber \\
      &  &+ \int_{\hat \Omega} \bigl(  A(V) \nabla y_2 \cdot \nabla p_1  - F \circ ( \id + V)  p_1  \bigr) \det(\iden +DV)  \, \dee \hat x, \label{defe2}
\end{eqnarray}
for all $y = (y_1,y_2), p =(p_1,p_2) \in \left( H_0^1(\hat{\Omega}) \right)^2$.
{Derivatives of the map $e$ may be found in Appendix \ref{app:coupledPoisson:Derivatives}.}
Similar to \eqref{firstderivpoi} we obtain for the shape derivative 
\begin{eqnarray}
    \mathcal{J}'(\hat \Omega)[V] & = &    \int_{\hat \Omega} \left(  j(\cdot, \hat y_1)\Div V + j_x(\cdot,\hat y_1)\cdot V +
    \A[V] \nabla \hat y_1 \cdot \nabla \hat p_2  -  \hat y_2  \hat p_2 \Div V  \right) \, \dee \hat x  \nonumber  \\
    & & + \int_{\hat \Omega} \left(  \A[V] \nabla \hat y_2 \cdot \nabla \hat p_1 - \Div(FV) \hat p_1 \right) \, \dee \hat x, \label{firstderivbil}
\end{eqnarray}
where $\hat y = (\hat y_1, \hat y_2) \in \left( H^1_0(\hat{\Omega}) \right)^2$ satisfies $e(0,\hat{y}) =0$ and the adjoint $\hat p =(\hat p_1,\hat p_2) \in ( H^1_0(\hat \Omega))^2 $ satisfies
\begin{align}
    \int_{\hat \Omega}  \nabla \hat p_2 \cdot \nabla \eta_1 \, \dee \hat x &= -\int_{\hat \Omega} j_y(\cdot,\hat y_1) \eta_1 \, \dee \hat x \quad \forall \eta_1 \in H^1_0(\hat \Omega),
    \\
    \int_{\hat \Omega}  \nabla \hat p_1 \cdot \nabla \eta_2 \, \dee \hat x  &= \int_{\hat \Omega} \hat  p_2 \eta_2  \, \dee \hat x  \quad \forall \eta_2 \in H^1_0(\hat \Omega).
\end{align}

\subsection{Optimisation of the first eigenvalue for the Laplacian}
\label{sec:application:evalue}
Our aim is to apply the above Lagrangian framework also for the optimisation of the first Dirichlet eigenvalue of the Laplacian, i.e.
\begin{equation}
    \mathcal{J}(\Omega) = \lambda_1(\Omega),
\end{equation}
where $\lambda_1(\Omega)$ is defined by
\begin{equation}\label{eq:eigenValue}
    \lambda_1 (\Omega) := \inf \left\{\int_{\Omega} |\nabla z|^2 \, \dee x : z \in H^1_0(\Omega),\, \int_{\Omega} z^2 \, \dee x= 1 \right\}.
\end{equation}
With the notation of Section \ref{pdeconstrained} we again fix a $\hat{\Omega} \Subset D$ which we now assume to be connected and set
$\Omega_V = (\id + V)(\hat{\Omega})$.  We transform the eigenvalue relation 
\begin{displaymath}
-\Delta z_{\Omega_V} = \lambda z_{\Omega_V} \mbox{ in } \Omega_V, \quad z_{\Omega_V} =0 \mbox{ on } \partial \Omega_V
\end{displaymath}
together with the condition $\int_{\Omega_V} z_{\Omega_V}^2 \dee x =1$ onto $\hat \Omega$ and write it in the form $e(V,y)=0$,
where $e\colon B \times X \rightarrow Z$, with $X= H^1_0(\hat \Omega)\times \R, Z = X^*$, and
\begin{eqnarray}
    \langle e(V,y), p \rangle & = & 
    \int_{\hat \Omega}  \left( A(V) \nabla z \cdot \nabla q -  \lambda z q \right) \, \det(\iden + DV) \, \dee \hat x \nonumber    \\
                              && + \mu \left(1-\int_{\hat \Omega}  z^2 \, \det (\iden + DV) \, \dee \hat x \right), 
                                  \label{defe3}
\end{eqnarray}
for $y = (z,\lambda), p=(q,\mu) \in H^1_0(\hat \Omega) \times \mathbb R$. Derivatives of the map $e$ may be found
in Appendix \ref{app:eValue:Derivatives}. 
Let $\hat z\in H^1_0(\hat \Omega)$ be an eigenfunction to the first Dirichlet
eigenvalue $\hat \lambda$ with $\int_{\hat \Omega} \hat z^2\, \dee x =1$. Then we have for all $p=(q,\mu) \in H^1_0(\hat \Omega)\times \mathbb R$
\begin{align}
    \langle e_V(0,\hat y)[V],p \rangle 
    =&
    \int_{\hat \Omega} \left( \A[V] \nabla \hat z \cdot \nabla q -  \hat \lambda \Div V \hat z q - \mu \Div V \hat z^2 \right) \, \dee \hat x,
    \\
    \langle e_y (0, \hat y)[(\eta,\tilde \eta)], p \rangle 
    =&
    \int_{\hat \Omega} \left( \nabla \eta \cdot \nabla q- \hat \lambda \eta q - \tilde \eta \hat z q - 2 \mu \hat z \eta \right) \, \dee \hat x,
\end{align}
where $\hat y =(\hat z,\hat \lambda)$.
Since $\hat{\lambda}$ is simple, cf.\ $\hat \Omega$ is connected and $\hat{\lambda}$ is the first Dirichlet eigenvalue \cite[Theorem 8.38]{GilTru77}, it can be shown that
$e_y(0,\hat y)\colon H^1_0(\hat \Omega) \times \mathbb R \rightarrow H^{-1}(\hat \Omega) \times \mathbb R$ is invertible. Thus we
can write for $V \in B$
\begin{displaymath}
  \mathcal J(\Omega_V)=J(V,y(V)), \quad \mbox{ where } J(V,(z,\lambda))=\lambda. 
\end{displaymath}
The Lagrange functional is given by $L(y,V,p)= \lambda+ \langle e(V,y),p \rangle$ so that we derive with the help of \eqref{firstderivpoi}
\begin{equation} \label{firstdereig}
  \mathcal J'(\hat \Omega)[V] = L_V(\hat y,0,\hat p)[V] = \langle e_V(0,\hat y)V,\hat p \rangle.
\end{equation}
The adjoint $\hat p=(\hat q ,\hat \mu)$ is given by the relation $L_y(\hat y,0,\hat p)=0$, i.e.
\begin{displaymath}
  \tilde{\eta} + \int_{\hat \Omega} \left( \nabla \eta \cdot \nabla \hat q - \hat \lambda \eta \hat q - \tilde \eta \hat z \hat q -
    2 \hat \mu \hat z \eta \right) \dee \hat x =0 \quad \forall (\eta,\tilde \eta) \in H^1_0(\hat \Omega) \times \mathbb R.
\end{displaymath}
We infer that $\int_{\hat \Omega} \hat z \hat q \, d \hat x =1$ as well as
\begin{displaymath}
  \int_{\hat \Omega} \left( \nabla \eta \cdot \nabla \hat q - \hat \lambda \eta \hat q \right) \dee \hat x = 2 \hat \mu \int_{\hat
    \Omega} \hat z \eta \, \dee \hat x \quad \forall \eta \in H^1_0(\hat \Omega).
\end{displaymath}
Choosing $\eta=\hat z$ we deduce that $\hat \mu=0$, so that $\hat q$ is an eigenfunction for the eigenvalue $\hat \lambda$.
Since $\hat \lambda$ is simple and $\int_{\hat \Omega} \hat z \hat q\, \dee \hat x =1$ we infer that $\hat q=\hat z$ and hence by
\eqref{firstdereig} that (cf.~\cite{Hen06,HenPie18}) 
\begin{equation}\label{eq:eigenValueDerivative}
    \lambda_1'(\hat \Omega)[V]
    =
    \int_{\hat \Omega} \left( \A[V]\nabla \hat z \cdot \nabla \hat z - \hat \lambda \Div V \hat z^2 \right) \dee \hat x.
\end{equation}
It is known that the first eigenvalue scales with volume, as such we are interested in fixing the volume of $\Omega$.
While it is known that the minimiser of the first eigenvalue is a ball, the methodology is interesting and can
be applied to more complicated eigenvalue problems.

\section{Discretisation}\label{sec:Discretisation}
Our aim is to formulate a descent algorithm which produces in each step a polygonal domain and which replaces
a possible PDE constraint with a corresponding finite element approximation. To begin, let $\mathcal{T}_h^0$ be
a triangulation of the hold--all domain $D$.
We look for discrete directions of descent in the  finite element spaces
\begin{equation}
  \mathcal{V}_h^n := \left\{ V_h \in C^0(\overline{D};\R^d) : V_h|_T \in P^1(T;\R^d),\, \forall T \in \mathcal{T}_h^n, \,
V_h = 0 \mbox{ on } \partial D  \right\},
\end{equation}
where $P^1(T;\R^d)$ denotes polynomials of degree at most one on $T$ with values in $\R^d$ and $\mathcal{T}_h^n$ is to be determined for $n\geq 1$.
With a polygonal initial domain, $\Omega_0$ which is a union of the triangles in the triangulation $\mathcal{T}_h^0$, we will set $\Omega_{n+1} = (\id +t_n V_n)(\Omega_n)$ for $n\geq 1$, where $t_n \in (0,1)$ is a step size and we will shortly explain how to choose $V_n \in \mathcal{V}_h^n$.
As well as updating the domain, the triangulation will also be updated, $\mathcal{T}^{n+1}_h = \{ (\id + t_n V_n)(T) : T \in \mathcal{T}_h^n \}$.
By the choice of $\mathcal{V}_h^n$ and the fact that $V_n$ will satisfy $|DV_n| \leq 1$, it holds that the updated mesh will be admissible since $t_n \in (0,1)$.
\subsection{Choice of descent direction}
Let $n\geq 0$ be fixed and let us denote the polygonal domain $\hat \Omega = \Omega_n \Subset D$ which is a union of triangles in $\mathcal T_h^n$.
For simplicity we will henceforth neglect the dependence on $n$.
Given $t \geq 0$, we aim to find $V_h^* \in \mathcal{V}_h$ such that
\begin{eqnarray*}
  V_h^* & \in &  \argmin \left\{ \frac{t}{2}\mathcal{J}''(\hat \Omega)_h[V_h,V_h] \mathcal{J}'(\hat \Omega)_h[V_h] :  V_h \in  \mathcal{V}_h, | D V_h | \leq 1
                \mbox{ a.e. in } D \right\} \\
      & = &  \argmin \left\{ \int_D \phi(DV_h) \dee x + \frac{t}{2}\mathcal{J}''(\hat \Omega)_h[V_h,V_h] + \mathcal{J}'(\hat \Omega)_h[V_h] :  V_h \in  \mathcal{V}_h \right\}.
\end{eqnarray*}
In this setting, $\mathcal{J}'(\hat \Omega)_h$ and $\mathcal{J}''(\hat \Omega)_h$ are  suitable approximations of $\mathcal J'(\hat \Omega)$ and $\mathcal J''(\hat \Omega)$ respectively.
The function $\phi$ is given by
\begin{equation}
    \phi(A):=
    \begin{cases}
        0, &|A| \leq1,
        \\
        \infty, &|A| >1.
    \end{cases}
\end{equation}

The approximations of the shape derivatives of $\mathcal J(\Omega)$ may be given in many forms, it may not be the case that one wishes to take the shape derivative of the discrete energy, one may prefer to do some post-processing.
This leads to discussions of order of discretisation and optimisation which we do not wish to include here.

Let us now state that there exists a solution.
The proof of which is almost identical to that of Theorem \ref{thm:ContinuousMinimisers}.
\begin{theorem}\label{thm:DiscreteMinimisers}
    Given fixed $h>0$ and $t\geq 0$, there exists
    \begin{equation}
        V_h^* \in \argmin \left\{ \frac{t}{2}\mathcal{J}''(\hat \Omega)_h[V_h,V_h] +  \mathcal{J}'(\hat \Omega)_h[V_h] :  V_h \in  \mathcal{V}_h,\, | D V_h | \leq 1 \right\}.
    \end{equation}
\end{theorem}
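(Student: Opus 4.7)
The theorem is the finite dimensional counterpart of Theorem \ref{thm:ContinuousMinimisers}, and the proof proposal I would follow mirrors that argument while exploiting that $\mathcal{V}_h$ is finite dimensional, so the key technical hypothesis (weak-$*$ lower semi-continuity of the second derivative) can be dropped.

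My plan is to use the direct method. First I introduce the admissible set
\begin{equation*}
K_h := \{V_h \in \mathcal{V}_h : |DV_h| \leq 1 \mbox{ a.e. in } D\}.
\end{equation*}
Since $V_h|_T$ is affine on each $T \in \mathcal{T}_h$, the pointwise constraint $|DV_h|\leq 1$ is in fact a finite collection of constraints, one per element, and $K_h$ is a closed subset of $\mathcal{V}_h$. Using the boundary condition $V_h = 0$ on $\partial D$ together with a finite-dimensional Poincaré inequality, the bound $|DV_h|\leq 1$ yields a uniform bound on $\|V_h\|_{W^{1,\infty}(D;\R^d)}$, hence $K_h$ is bounded, and therefore compact because $\mathcal{V}_h$ is finite dimensional.

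Next I take a minimising sequence $\{V_h^{(k)}\}_{k\in\mathbb N} \subset K_h$ for the functional
\begin{equation*}
F_h(V_h) := \tfrac{t}{2}\mathcal{J}''(\hat\Omega)_h[V_h,V_h] + \mathcal{J}'(\hat\Omega)_h[V_h].
\end{equation*}
By compactness I extract a subsequence converging in $\mathcal{V}_h$ (all norms being equivalent) to some $V_h^* \in K_h$. The functional $V_h \mapsto \mathcal{J}'(\hat\Omega)_h[V_h]$ is linear on a finite dimensional space and hence continuous, and $V_h \mapsto \mathcal{J}''(\hat\Omega)_h[V_h,V_h]$ is a quadratic form on a finite dimensional space, hence also continuous, so $F_h(V_h^*) = \lim_k F_h(V_h^{(k)}) = \inf_{K_h} F_h$, which yields the claim.

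The only point that requires attention — and it is really a matter of bookkeeping rather than a genuine obstacle — is the continuity of the discrete second derivative $\mathcal{J}''(\hat\Omega)_h$. In the continuous setting, the corresponding property had to be postulated (weak-$*$ lower semi-continuity and, in practice, non-negativity near a minimiser), but here it is automatic from the definition through the Lagrangian construction in Section \ref{pdeconstrained}, since the involved state and adjoint variables depend continuously on $V_h$ in the finite dimensional space $\mathcal{V}_h$. Consequently no sign assumption on $\mathcal{J}''(\hat\Omega)_h$ is needed for existence, in contrast to the continuous level.
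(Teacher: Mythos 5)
Your proposal is correct and follows essentially the same route as the paper: the authors state that the proof is almost identical to that of Theorem \ref{thm:ContinuousMinimisers}, with the finite dimensionality of $\mathcal{V}_h$ supplying a strongly convergent subsequence of an infimising sequence so that no lower semi-continuity or sign assumptions on $\mathcal{J}'(\hat\Omega)_h$ or $\mathcal{J}''(\hat\Omega)_h$ are needed. Your additional observations (closedness and boundedness of the constraint set, continuity of the linear and quadratic forms) are exactly the bookkeeping the paper leaves implicit.
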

Unlike before, we did not need to make conditions on $\mathcal{J}'(\hat \Omega)_h$ or $\mathcal{J}''(\hat \Omega)_h$, this is due to the finite dimensional nature ensures that the existence of a strongly convergengent subsequence of an infimising sequence.

Let us comment that the question of in what way and with which estimates does $V_h^*$ converge as $h \to 0$ is open.
In \cite{Bar20}, a problem similar to the case $t=0$ is considered.
Using discontinuous elements, the author was able to provide estimates on the energy.
Let us note that this lack of estimate is not expected to be prohibitive to an initial analysis of our proposed algorithm.

In order to find the function $V_h^*$, we use an alternating direction method of multipliers (ADMM) approach in order to solve the above problem.
To do so, we set
\begin{equation}
    \mathcal{Q}_h := \left\{ q_h \in L^2(D;\R^{d\times d}) : q_h|_T \in P^0(T;\R^{d\times d}),\, \forall T \in \mathcal{T}_h \right\}
\end{equation}
and consider for a given $\tau >0$ the functional $\mathcal L_\tau\colon  \mathcal{V}_h \times
   \mathcal{Q}_h \times  \mathcal{Q}_h \rightarrow \mathbb R$ with
\begin{equation}\begin{split}
  \mathcal{L}_\tau(V_h,q_h;\lambda_h) :=&
  \int_D \left( \phi(q_h) + \lambda_h : (DV_h-q_h) \right) \dee x
  + \frac{\tau}{2}\|DV_h-q_h\|_{L^2(D;\R^{d\times d})}^2
  \\
  &+\frac{t}{2}\mathcal{J}''( \hat \Omega)_h[V_h,V_h] + \mathcal{J}'(\hat \Omega)_h[V_h].
\end{split}\end{equation}
The idea of ADMM is to alternatively minimise $\mathcal{L}_\tau$ over $q_h$ and $V_h$,
then perform an update to $\lambda_h$ and repeat this until a certain quantity is small.
More precisely, the algorithm has the following form: 
\begin{algorithm}
    \ \\ [2mm]
    0. {Choose $V^0_h$ and $\lambda^0_h$ such that $\mathcal{J}'(\hat \Omega)_h[V^0_h] < \infty$}
    \\[2mm]
    1. {Set $R = \infty$, $j=1$}
    \\[2mm]
    While {$R > tol$:}
    \\[2mm]
    2. {Find $q^j_h \in \argmin \{ \mathcal{L}_\tau(V^{j-1}_h,q_h;\lambda^{j-1}_h) : q_h \in  \mathcal{Q}_h, |q_h| \leq 1 \}$}
    \\[2mm]
    3. {Find $V^j_h \in \argmin \{ \mathcal{L}_\tau(V_h,q^j_h;\lambda^{j-1}_h) : V_h \in \mathcal{V}_h \}$}
    \\[2mm]
    4. {Set $\lambda^j_h = \lambda^{j-1}_h + \tau ( DV^j_h - q^j_h)$}
    \\[2mm]
    5. {Set $R = \left( \|\lambda^j_h-\lambda^{j-1}_h \|_{L^2(D ;\R^{d\times d})}^2 + \tau^2 \|DV^j_h - DV^{j-1}_h \|_{L^2(D;\R^{d\times d})}^2 \right)^{\frac{1}{2}}$ }
    \\[2mm]
    6. {Update j = j+1}
    \label{alg:ADMM}
\end{algorithm}
  
Let us  also mention \cite{BarMil20} which considers more general ADMM methods with variable $\tau$.
Particularly, in our experiments we make use of such an algorithm with variable $\tau$, namely \cite[Algorithm 3.19]{BarMil20}.

In a finite dimensional setting, since we have the existence of minimisers, c.f. Theorem \ref{thm:DiscreteMinimisers}, it is almost immediate that Algorithm \ref{alg:ADMM} converges.
It is necessary to check the sign of $\mathcal{J}''(\Omega)_h$; in the locality of a minimiser, one might expect that it is non-negative.
\begin{theorem}[ADMM converges]
    Suppose that $t \in [0,1)$ and $t\mathcal{J}''(\hat{\Omega})_h$ is a non-negative operator, then it holds that Algorithm \ref{alg:ADMM} converges.
\end{theorem}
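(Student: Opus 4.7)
My plan is to recognise Algorithm \ref{alg:ADMM} as a standard ADMM splitting applied to the convex problem
\[
\min_{V_h \in \mathcal V_h,\, q_h \in \mathcal Q_h} \bigl\{ F(V_h) + G(q_h) \bigr\} \quad \text{subject to} \quad DV_h = q_h,
\]
where $F(V_h) := \frac{t}{2}\mathcal{J}''(\hat\Omega)_h[V_h,V_h] + \mathcal{J}'(\hat\Omega)_h[V_h]$ and $G(q_h) := \int_D \phi(q_h)\,\dee x$. Under the hypothesis $t\mathcal{J}''(\hat\Omega)_h \geq 0$, the functional $F$ is a convex quadratic plus linear, while $G$ is the indicator of the compact convex set $\{q_h \in \mathcal{Q}_h : |q_h|\leq 1\}$. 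The functional $\mathcal{L}_\tau$ in the algorithm is then precisely the augmented Lagrangian of this splitting, and convergence will follow from the classical ADMM convergence theorem (Glowinski--Marrocco / Eckstein--Bertsekas) for the sum of two closed proper convex functionals under a linear equality constraint.

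The first step is to verify that each subproblem is well posed. Since $q_h$ is piecewise constant, the minimisation in Step~2 decouples element-by-element into the projection of a given matrix onto the spectral-norm ball, which is computed explicitly via the singular value decomposition. For Step~3, the quadratic form $V_h \mapsto \frac{\tau}{2}\|DV_h\|_{L^2(D)}^2 + \frac{t}{2}\mathcal{J}''(\hat\Omega)_h[V_h,V_h]$ is positive definite on $\mathcal V_h$ since $\tau>0$, the homogeneous boundary condition supplies a discrete Poincaré inequality, and $t\mathcal{J}''(\hat\Omega)_h \geq 0$ by assumption; hence the minimiser $V_h^j$ exists and is unique. Next, I would exhibit a saddle point of the unaugmented Lagrangian: Theorem \ref{thm:DiscreteMinimisers} already supplies a primal minimiser $(V_h^*, q_h^*)$ with $DV_h^* = q_h^*$, and in this finite-dimensional convex setting a corresponding dual multiplier $\lambda_h^* \in \mathcal{Q}_h$ is produced from the first-order optimality system in the standard way.

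With the subproblems well posed and a saddle point in hand, an application of the standard ADMM convergence theorem yields $V_h^j \to V_h^*$ and $q_h^j \to q_h^*$ together with boundedness of $\{\lambda_h^j\}$; consequently $\|\lambda_h^j - \lambda_h^{j-1}\|_{L^2}$ and $\|DV_h^j - DV_h^{j-1}\|_{L^2}$ both tend to zero, so the residual $R$ in Step~5 vanishes in the limit and the stopping criterion is eventually satisfied. The main obstacle, such as it is, lies precisely in the saddle-point step: this is where the non-negativity of $t\mathcal{J}''(\hat\Omega)_h$ earns its keep, for without convexity of $F$ duality may fail and the ADMM iterates can diverge. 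The role of the restriction $t \in [0,1)$ appears to be consistency with the outer descent step, where one needs $|tDV_h^*|<1$ to guarantee that $\id+tV_h^*$ is a homeomorphism, rather than a genuine requirement of the inner convergence analysis; I would flag this in the write-up.
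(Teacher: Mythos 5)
Your proposal is correct and follows essentially the same route as the paper, which simply observes that non-negativity of $t\mathcal{J}''(\hat\Omega)_h$ makes the objective convex and then invokes the standard convergence theory for ADMM applied to the sum of two closed proper convex functions under a linear coupling constraint (the paper cites \cite{HeYua15} where you cite Glowinski--Marrocco / Eckstein--Bertsekas, but these cover the same classical result). Your additional verifications --- well-posedness of the two subproblems, existence of a saddle point via Theorem \ref{thm:DiscreteMinimisers}, and the remark that $t\in[0,1)$ is really needed for the outer update rather than the inner iteration --- are all sound and go beyond the level of detail the paper provides.
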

This follows from \cite{HeYua15}.
The assumption that $t\mathcal{J}''(\Omega)_h$ is non-negative is made in order to ensure the appropriate functional is convex, which is a sufficient condition for the convergence of ADMM.
\subsection{Evaluation of \texorpdfstring{$\mathcal J'(\hat \Omega)_h$}{J'(Omega)h}}
\subsubsection{PDE--constrained shape optimisation}
Let us formulate suitable approximations of the shape derivatives derived in \ref{pdeconstrained}. Given the polygonal domain
$\hat \Omega$ we denote by  $\mathbb{S}_h(\hat \Omega)$  the space of linear finite elements on $\hat \Omega$ (resolved by a subtriangulation of
$\mathcal{T}_h$) which vanish on $\partial \hat \Omega$. If the constraint is given by \eqref{poisson} we set 
\begin{equation}
    \mathcal{J}'(\hat \Omega)_h[V_h] =
    \int_{\hat \Omega} \left( j(\cdot,\hat y_h)\Div V_h + j_x(\cdot,\hat y_h)\cdot V_h + \A[V_h] \nabla \hat y_h \cdot \nabla \hat p_h
     - \hat p_h \Div (F V_h ) \right) \dee \hat x,
\end{equation}
for $V_h \in \mathcal{V}_h$, where $\hat y_h,\hat p_h \in \mathbb{S}_h(\hat \Omega)$ satisfy
\begin{equation}
    \begin{split}
        \int_{\hat \Omega} \nabla \hat y_h \cdot \nabla \eta_h \, \dee \hat x
        =&
        \int_{\hat \Omega} F \eta_h \, \dee \hat x  \quad \forall \eta_h \in \mathbb{S}_h(\hat \Omega),
        \\
        \int_{\hat \Omega} \nabla \hat p_h \cdot \nabla \eta_h \, \dee \hat x
        =&
        -\int_{\hat \Omega} j_y(\cdot,\hat y_h) \eta_h \, \dee \hat x  \quad \forall \eta_h \in \mathbb{S}_h(\hat \Omega).
    \end{split}
\end{equation}
On the other hand, if the constraint is given by \eqref{bilaplace} then we let 
\begin{equation}
    \begin{split}
        \mathcal{J}'(\hat \Omega)_h[V_h]
        =&
        \int_{\hat \Omega} \left( j(\cdot, \hat y_{h,1}) \Div V_h + j_x(\cdot,\hat y_{h,1})\cdot V_h
        +
        \A[V]\nabla \hat y_{h,1} \cdot \nabla \hat p_{h,2} - \hat y_{h,2} \hat p_{h,2} \Div V_h \right) \, \dee \hat x
        \\
        &
         +\int_{\hat \Omega} \left( \A[V_h] \nabla \hat y_{h,2} \cdot \nabla \hat p_{h,1}  - \Div (V_h F) \hat p_{h,1} \right) \, \dee \hat x,
    \end{split}
\end{equation}
for $V_h \in \mathcal{V}_h$.
Here, $\hat y_h=(\hat y_{h,1},\hat y_{h,2}) \in (\mathbb{S}_h(\hat \Omega))^2$ satisfies
\begin{align}
    \int_{\hat \Omega} \nabla \hat y_{h,1} \cdot \nabla \eta_h \, \dee \hat x  &= \int_{\hat \Omega} \hat y_{h,2} \eta_h \, \dee \hat x \quad \forall \eta_h \in \mathbb{S}_h(\hat \Omega),
    \\
    \int_{\hat \Omega} \nabla \hat y_{h,2} \cdot \nabla \eta_h \, \dee \hat x  &= \int_{\hat \Omega} F \eta_h \, \dee \hat x  \quad \forall \eta_h \in \mathbb{S}_h(\hat \Omega)
\end{align}
while  $\hat p_h=(\hat p_{h,1},\hat p_{h,2}) \in (\mathbb{S}_h(\hat \Omega))^2$ satisfies
\begin{align}
    \int_{\hat \Omega} \nabla \hat p_{h,2} \cdot \nabla \eta_h \, \dee \hat x  &= -\int_{\hat \Omega} j_y(\cdot,y_{h,1})\eta_h \, \dee \hat x \quad \forall \eta_h \in\mathbb{S}_h(\hat \Omega),
    \\
    \int_{\hat \Omega} \nabla \hat p_{h,1} \cdot \nabla \eta_h \, \dee \hat x  &= \int_{\hat \Omega} \hat p_{h,2} \eta_h \, \dee \hat x  \quad \forall \eta_h \in \mathbb{S}_h(\hat \Omega).
\end{align}

\subsubsection{Optimisation of the first eigenvalue for the Laplacian}
For a given polygonal domain we determine $\hat z_h \in \mathbb{S}_h(\hat \Omega)$ and $\hat \lambda_h>0$ such that $\int_{\hat \Omega} \hat y_h^2 \, \dee x=1$ and
\begin{displaymath}
  \hat \lambda_h = \inf \left\{\int_{\hat \Omega} |\nabla \hat z_h|^2 \, \dee \hat x  : \hat z_h  \in \mathbb{S}_h(\hat \Omega),\, \int_{\hat \Omega} \hat z_h^2\, \dee \hat x = 1 \right\} =
  \int_{\hat \Omega} | \nabla \hat z_h |^2 \, \dee \hat x.
\end{displaymath}
Supposing that the eigenvalue  $\lambda_h$ is simple we let, recalling \eqref{eq:eigenValueDerivative}
\begin{equation}
  \mathcal{J}'(\hat \Omega)_h[V_h] = \int_{\hat \Omega} \left( \A[V_h] \nabla \hat z_h \cdot \nabla \hat z_h - \hat\lambda_h \hat z_h^2 \Div V_h
    \right) \, \dee \hat x \mbox{ for } V_h \in \mathcal{V}_h.
\end{equation}

\section{Numerical experiments}\label{sec:Applications}
We now {provide numerical experiments for the applications we have described}.
In the integrals for the energy we use quadrature of order 2, while for the shape derivatives, we are using
the order which is automatically decided by the software. \\
As mentioned above we will solve the state and adjoint PDEs with a finite element approximation.
The finite element approximation is performed with DUNE \cite{duneReference}, making particular use of the DUNE Python bindings \cite{DunePython1,DunePython2}.
We consider a construction of update direction using four different approaches.
Our approaches will be:
\begin{itemize}
    \item
    The direction of steepest descent method using the $W^{1,\infty}$-topology, constructed with an adaptive ADMM method, as mentioned after Algorithm \ref{alg:ADMM}.
    This will be referred to as $p = \infty$.
    \item
    A Newton-type direction, which will be a discrete minimiser of \eqref{eq:NewtonProblem} for a given $t>0$, referred to as the Newton method.
    Much like the $p=\infty$ case above, this will be constructed with an adaptive ADMM method.
    \item
    To compare against existing approaches, for $p=2,4$, we will consider the minimiser of $\mathcal{V}_h \ni V_h \mapsto \mathcal{J}'(\Omega)_h + \frac{1}{p}\int_D |DV_h|^p\dee x $.
    In the case that $p=2$, this is seen to coincide with the discrete case of \eqref{eq:MinInHilbertSpace} with $H = H^1_0(D;\R^d)$ with the Dirichlet inner product.
    We will refer to these cases by their $p$ value.
\end{itemize}
The discrete functions produced by the $p=2$ and $p=4$ methods will be normalised so that they have a $W^{1,\infty}(D;\R^d)$ semi-norm of $1$.
This normalisation is performed so that we need not check whether the mesh has overlapped.
For each of the experiments, we will set the hold-all domain to be the box $D = (-2,2)^2$.
With these directions, we will move the vertices of our mesh {according to an Armijo step rule}.
We will stop after 20 shape updates have been made.
In most cases the domain has become close to stationary at this point and the Armijio step-size has become rather small.

The energy along the iterations will be plotted.
In the case that the minimiser is known, the origin will be offset by the known value, when the minimiser is not known, the origin will be offset by the smallest value attained in the experiment.

\subsection{Minimisation without a PDE constraint}\label{sec:application:noPDE}
Here we will consider that there is no PDE constraint, so that the map $e$ need not be included.  We comment that the no PDE example may be derived as an example from the following Section \ref{sec:application:poisson} where one chooses right hand side data $F=0$ so that the state constraint guarantees $y=0$.

For this experiment, the main contributions to the error is that induced by the quadrature rules when calculating the energy and the shape derivative, as well as the direction of descent with the chosen method.

\subsubsection{No PDE experiment 1}\label{sec:experiment:NOPDE:Experiment1}
For this problem, we consider
\begin{equation}
    j(x,y) = -Z(x)
\end{equation}
where
\begin{equation}
    Z(x) = 
    \begin{cases}
        \cos(0.5\pi x_1)\cos(0.5\pi x_2) & |x_1| \mbox{ and } |x_2| \leq 1,\\
        \frac{\pi}{4}(1-x_1^2) & |x_1|>1 \mbox{ and } |x_2|<1,\\
        \frac{\pi}{4}(1-x_2^2) & |x_1|<1 \mbox{ and } |x_2|>1,\\
        \frac{\pi}{4}(2-x_1^2-x_2^2) & \mbox{otherwise}.
    \end{cases}
\end{equation}
For the Newton direction we take $t= 0.0625$.
One expects the square $(-1,1)^2$ to be a minimiser of $\mathcal{J}$.
It holds that
$\mathcal{J}\left((-1,1)^2\right) = -\frac{16}{\pi ^2}$.
We start with the initial domain $(-1.5,-1)\times (-1,1)$.
The triangulation of the domain and hold-all is displayed in Figure \ref{fig:experiment:NOPDE:Experiment1:InitialDomain}.
In Figure \ref{fig:experiment:NOPDE:Experiment1:graphs}, the energy
of shapes along the minimising sequences we produce are given.
\begin{figure}\centering
    \begin{subfigure}[b]{.49\linewidth}
    \centering
    \includegraphics[width=.8\linewidth]{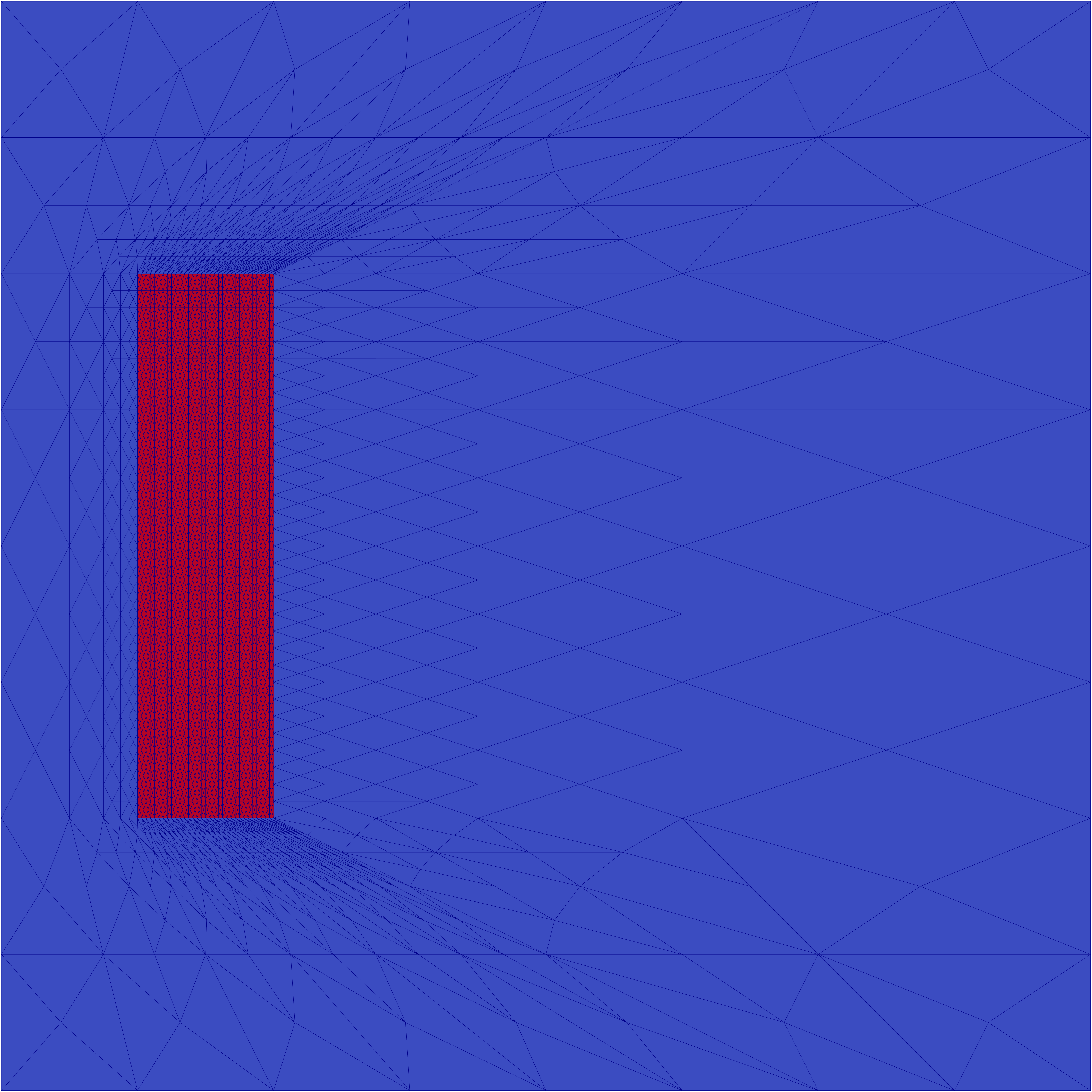}
    \caption{Initial domain for the first \emph{No PDE} experiment in Section \ref{sec:experiment:NOPDE:Experiment1}, with $(-1.5,-1)\times(-1,1)$ in red and the hold-all, $(-2,2)^2$ in blue.}
    \label{fig:experiment:NOPDE:Experiment1:InitialDomain}
    \end{subfigure}\hfill
    \begin{subfigure}[b]{.49\linewidth}
    \centering
    \includegraphics[width=1\linewidth]{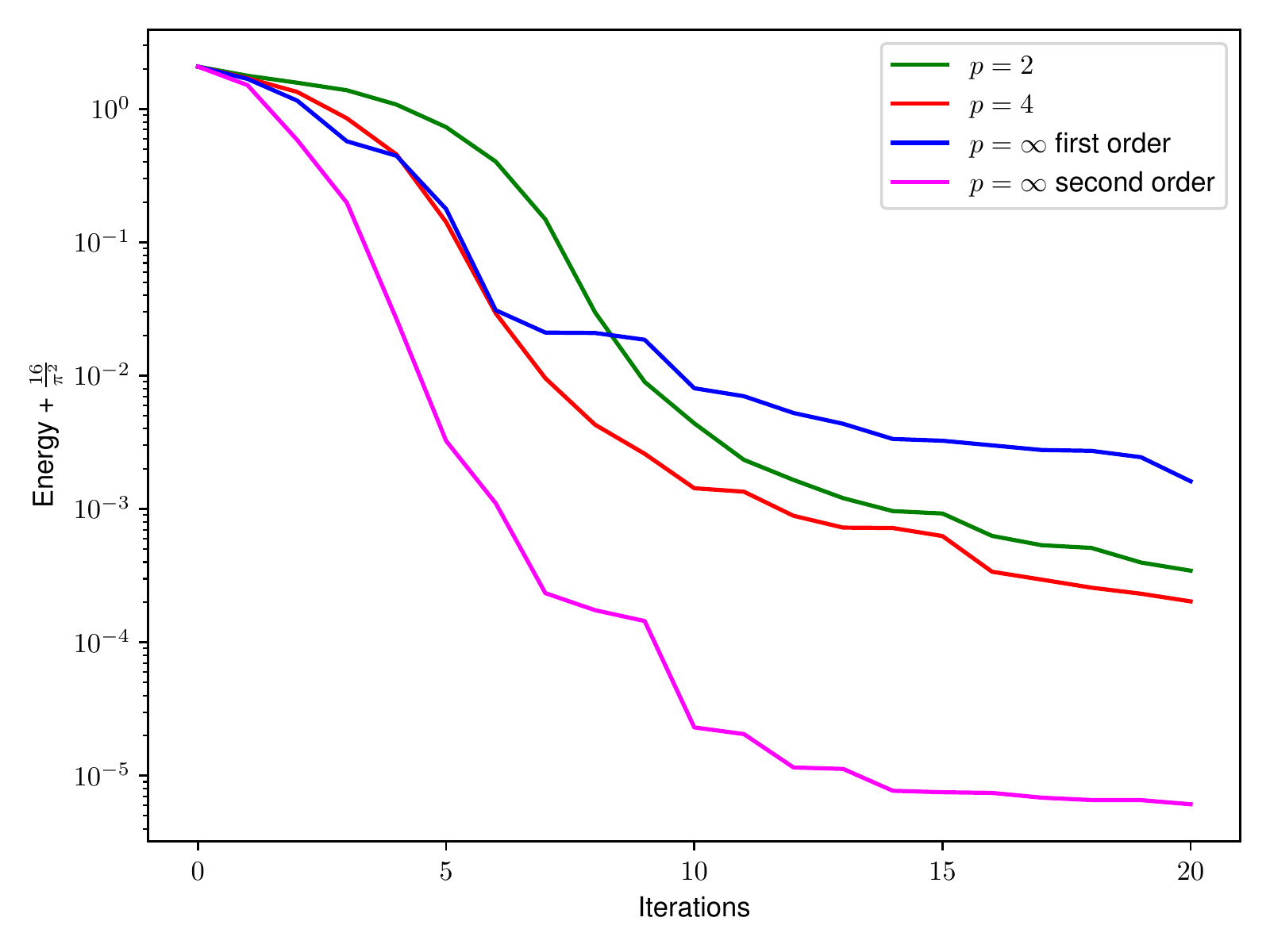}
    \caption{Graph of the energy for the iterates in the first \emph{No PDE} experiment in Section \ref{sec:experiment:NOPDE:Experiment1}.
    It is seen that the Newton-type method is energetically performing the best while the first order infinity method appears to struggle compared to the traditional $p=2$ method.
    }
    \label{fig:experiment:NOPDE:Experiment1:graphs}
    \end{subfigure}
    \caption{Initial mesh and graph of the energy for the experiment in Section \ref{sec:experiment:NOPDE:Experiment1}.} 
\end{figure}
In Figure \ref{fig:experiment:NOPDE:Experiment1:meshes}, the meshes of the final domains $\Omega$ for each of the methods are given.
\begin{figure}
    \vspace{-.45\linewidth} 
    \centering
    \adjustbox{trim = {0\width} {0.5\height} {0.\width} {-1\height}, clip ,width = .45\linewidth }{
    \includegraphics[width=.35\linewidth]{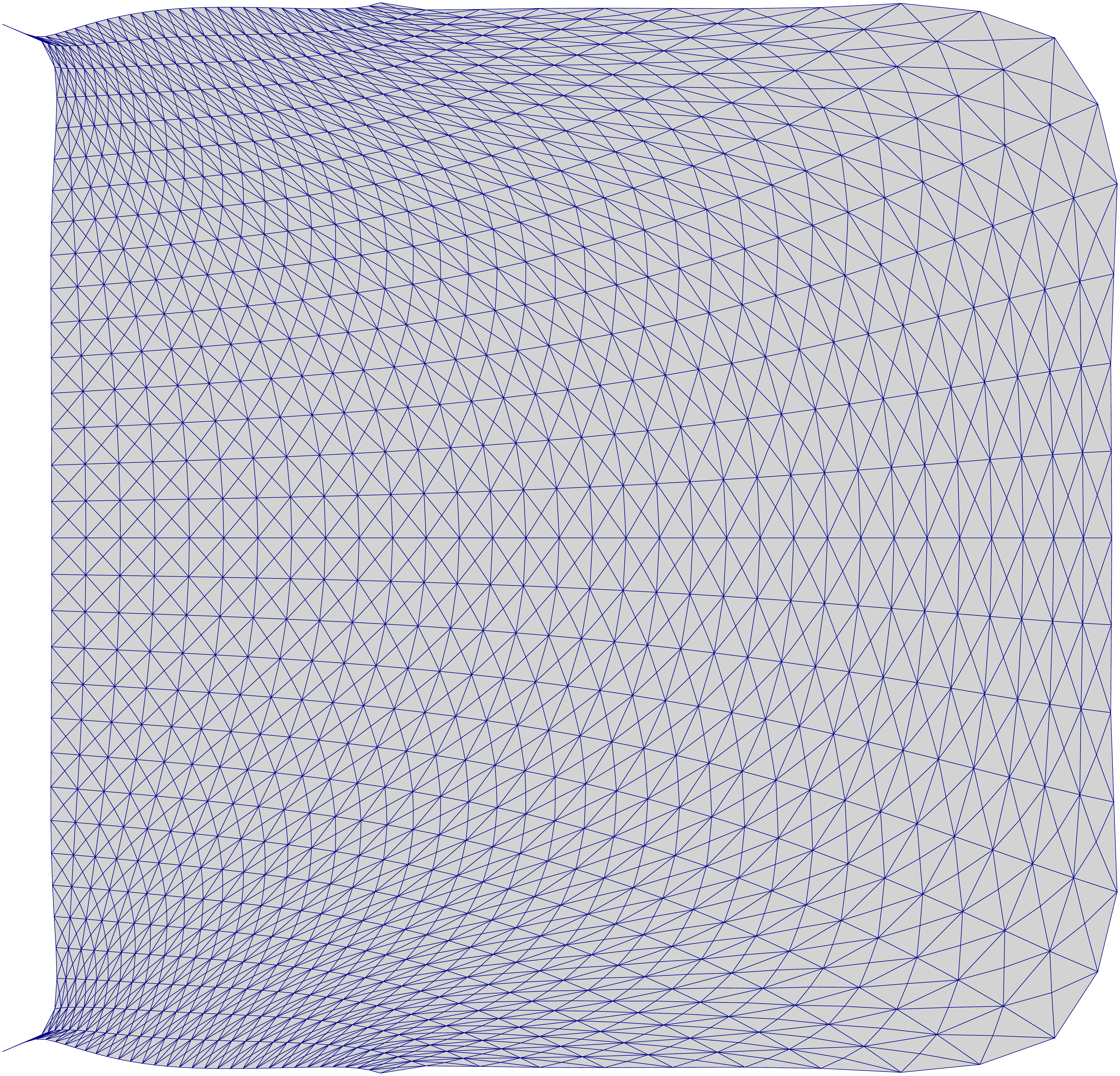}}\hspace{.05\linewidth}
    \adjustbox{trim = {0.\width} {0.5\height} {0\width} {-1\height}, clip ,width = .45\linewidth }{
    \includegraphics[width=.35\linewidth]{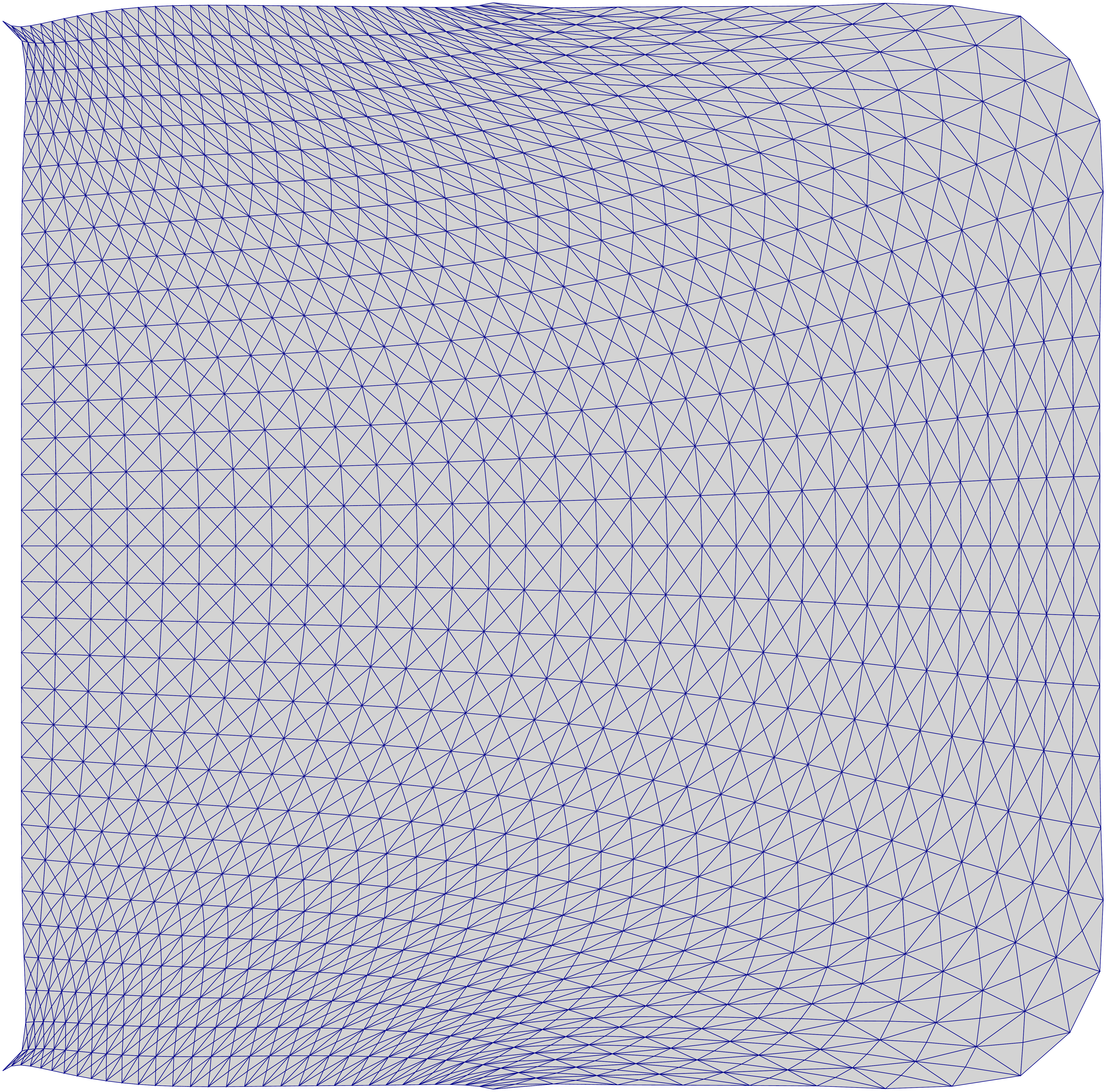}}
    \\
    \adjustbox{trim = {0\width} {0\height} {0.\width} {.5\height}, clip ,width = .45\linewidth }{
    \includegraphics[width=.35\linewidth]{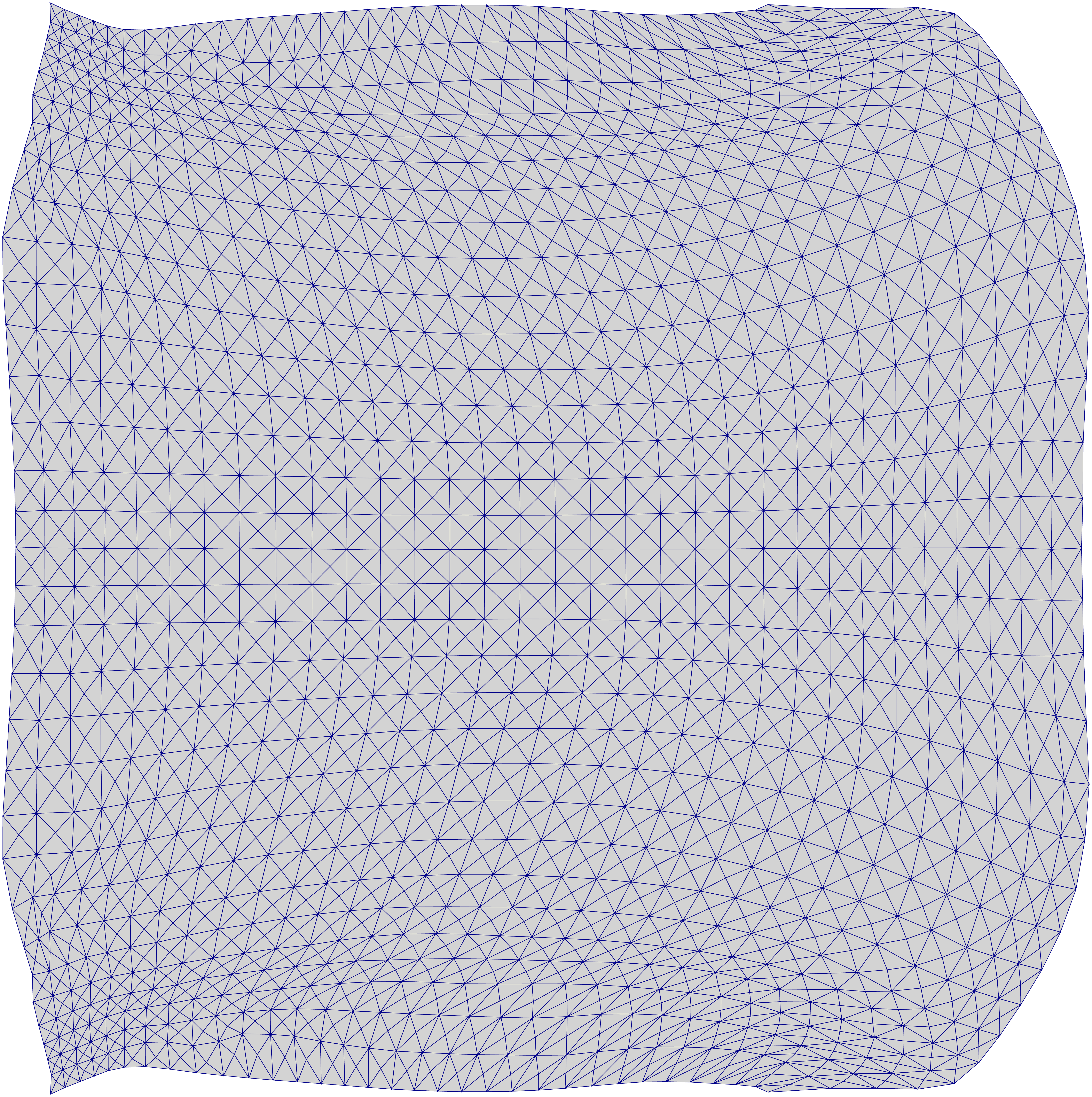}}\hspace{.05\linewidth}
    \adjustbox{trim = {0.\width} {0\height} {0\width} {.5\height}, clip ,width = .45\linewidth }{
    \includegraphics[width=.35\linewidth]{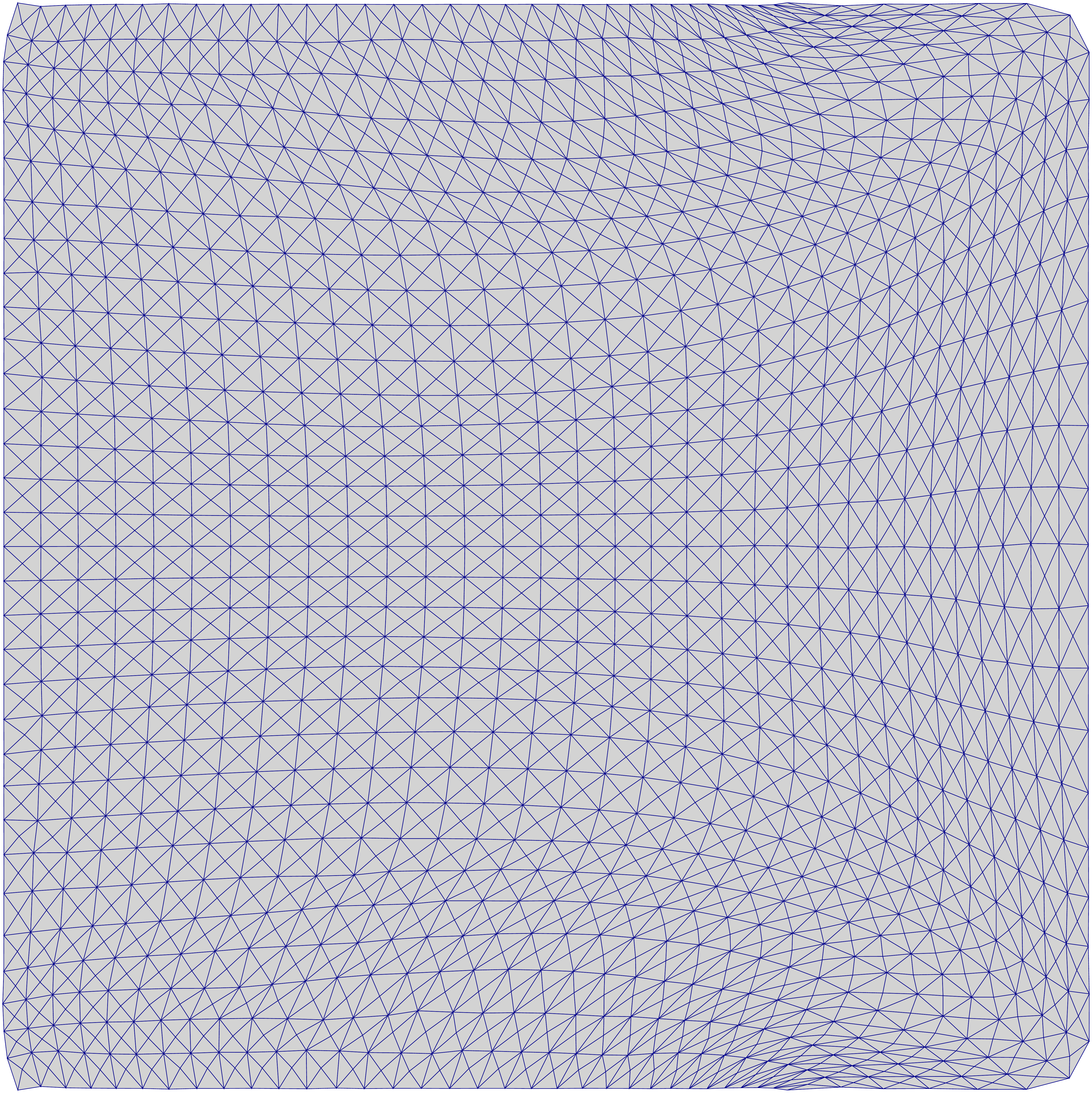}}
    \caption{The meshes of $\Omega$ for the final domains produced in the first \emph{No PDE} experiment in Section \ref{sec:experiment:NOPDE:Experiment1}: top left to bottom, $p=2,4,\infty$ and second order.
    Due to symmetry of the result, we show only a half of each mesh.
    It is seen that none of the methods correctly captures the corners which are expected from the minimising shape.
    This is unsurprising as there is not a particularly large influence of the energy around the corners, where $Z$ is quite close to zero.
    The Newton-type method is the closest to forming corners.
    }
    \label{fig:experiment:NOPDE:Experiment1:meshes}
\end{figure}

\subsubsection{No PDE experiment 2}\label{sec:experiment:NOPDE:Experiment2}
For this problem, we consider
\begin{equation}
    j(x,y) = \frac{1}{2} Z(x)^2
\end{equation}
where for given $\epsilon >0$,
\begin{equation}
    Z(x) = \sqrt{(x_1+x_2)^2 + \epsilon} + \sqrt{(x_1-x_2)^2 + \epsilon }
\end{equation}
is a smooth approximation to $|x_1+x_2| + |x_1-x_2|$.
For the Newton direction we take $t= 0.125$.
A very similar experiment with the non-smooth energy was considered in \cite{DecHerHin21}.
This smooth approximation is used because we intend to employ the Newton method for which it would be useful to have (weak) second derivatives of $Z$.
Without any constraint, we know that the theoretical minimiser is degenerate, a measure zero set.
To avoid this, we will fix the area to be constrained equal to $4$.
We expect this to have minimiser close to the square $(-1,1)^2$ which, for $\epsilon = 0$, has energy $4$.
Our directions of descent will only preserve the area constraint in a linear sense by restricting to $V$ with $\int_\Omega \Div V = 0$.
We will perform a projection step to fix the area after each update.

We take $\epsilon = 10^{-4}$ and start with an approximation of a ball of radius $\frac{2}{\sqrt{\pi}}$ at the origin.
The triangulation of the domain and hold-all is displayed in Figure \ref{fig:experiment:NOPDE:Experiment2:InitialDomain}.
In Figure \ref{fig:experiment:NOPDE:Experiment2:graphs}, the energy of shapes along the minimising sequences we produce are given.
\begin{figure}\centering
    \begin{subfigure}[b]{.49\linewidth}
    \centering
    \includegraphics[width=.8\linewidth]{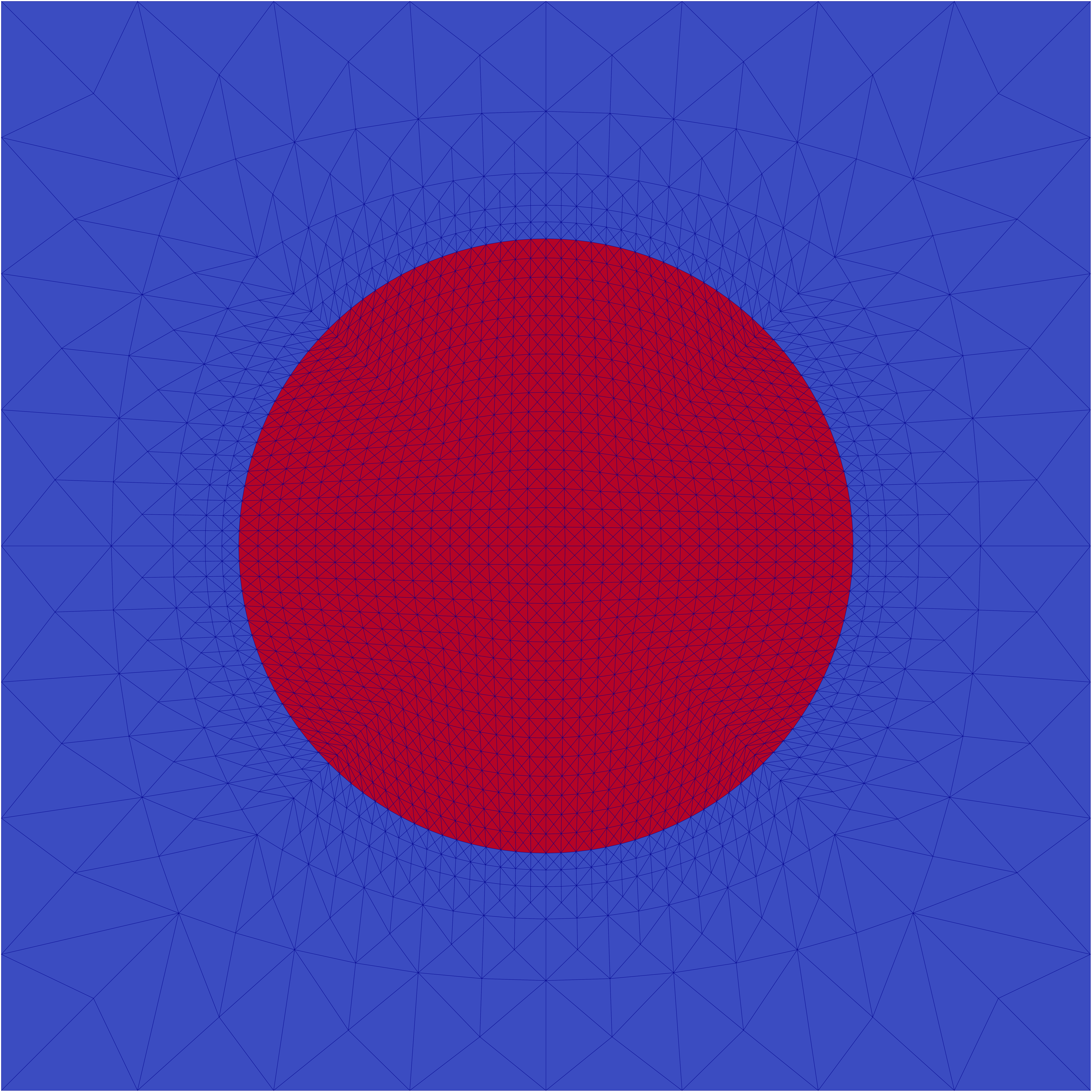}
    \caption{Initial domain for the second \emph{No PDE} experiment in Section \ref{sec:experiment:NOPDE:Experiment2}, an approximation of the ball of radius $\frac{2}{\sqrt{\pi}}$ at the origin, is in red and the hold-all, $(-2,2)^2$ in blue.}
    \label{fig:experiment:NOPDE:Experiment2:InitialDomain}
    \end{subfigure}\hfill
    \begin{subfigure}[b]{.49\linewidth}
    \centering
    \includegraphics[width=1\linewidth]{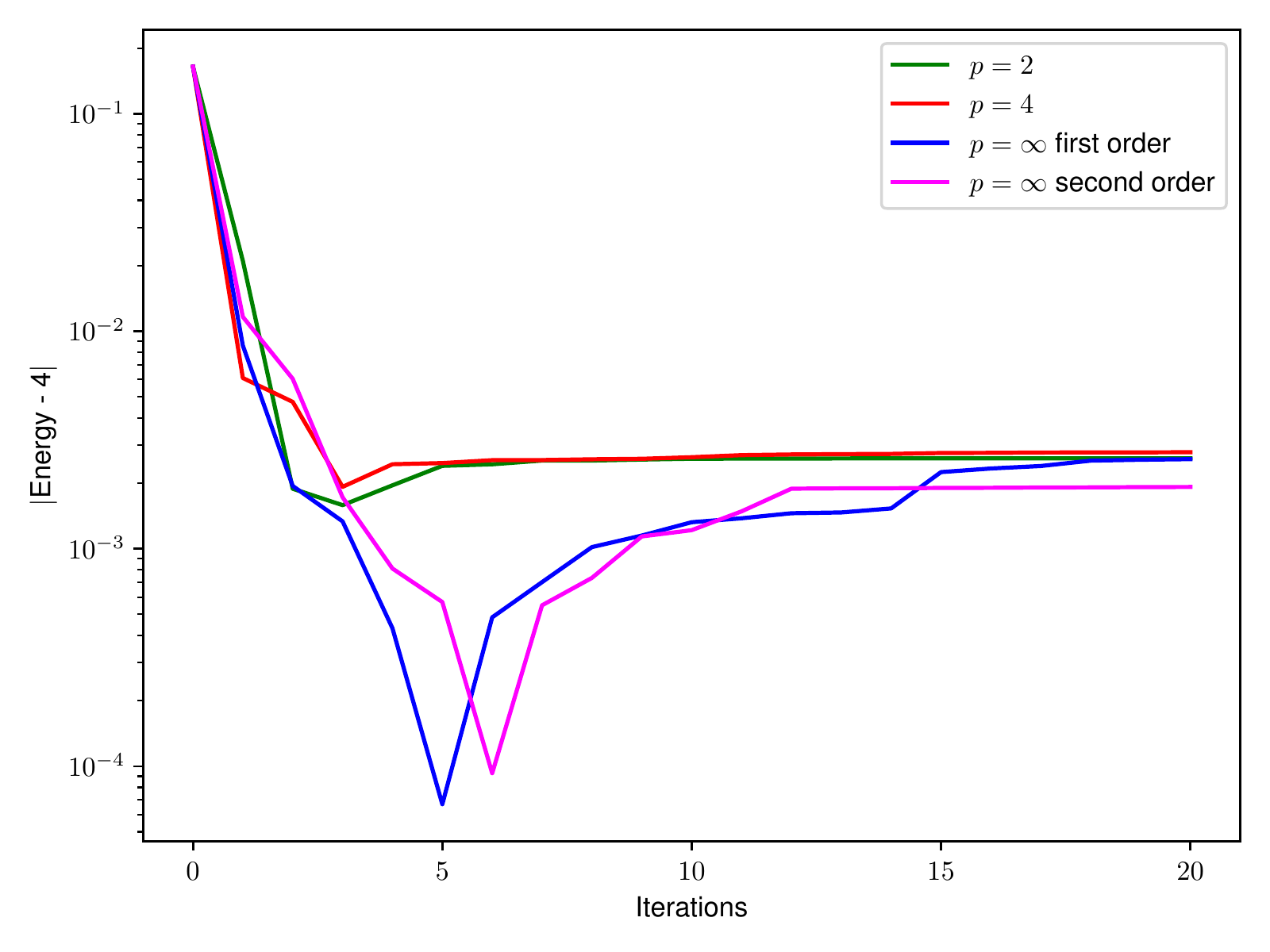}
    \caption{Graph of the energy for the iterates in the second \emph{No PDE} experiment in Section \ref{sec:experiment:NOPDE:Experiment2}.
    Let us note that the bumps in the graphs are due to the absolute value we are using.
    It is seen that the Newton method has the closest energy.}
    \label{fig:experiment:NOPDE:Experiment2:graphs}
    \end{subfigure}
    \caption{Initial mesh and graph of the energy for the experiment in Section \ref{sec:experiment:NOPDE:Experiment2}.}
\end{figure}
In Figure \ref{fig:experiment:NOPDE:Experiment2:meshes}, the meshes for the final domains $\Omega$ for each of the methods are given.
\begin{figure}
    \vspace{-.45\linewidth} 
    \centering
    \adjustbox{trim = {0\width} {0\height} {0.5\width} {-1\height}, clip ,width = .225\linewidth }{
    \includegraphics[width=.35\linewidth]{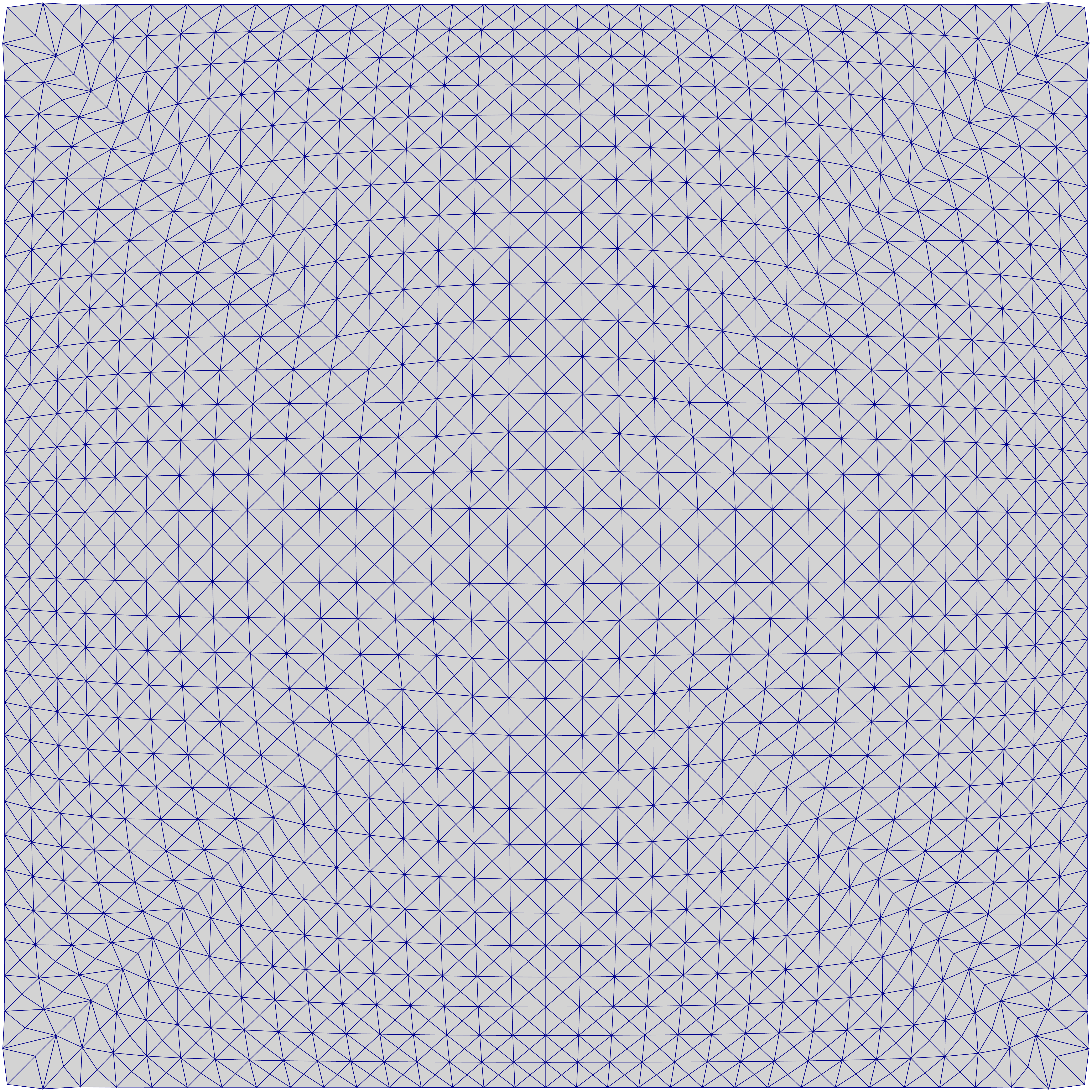}}
    \adjustbox{trim = {0.5\width} {0\height} {0\width} {-1\height}, clip ,width = .225\linewidth }{
    \includegraphics[width=.35\linewidth]{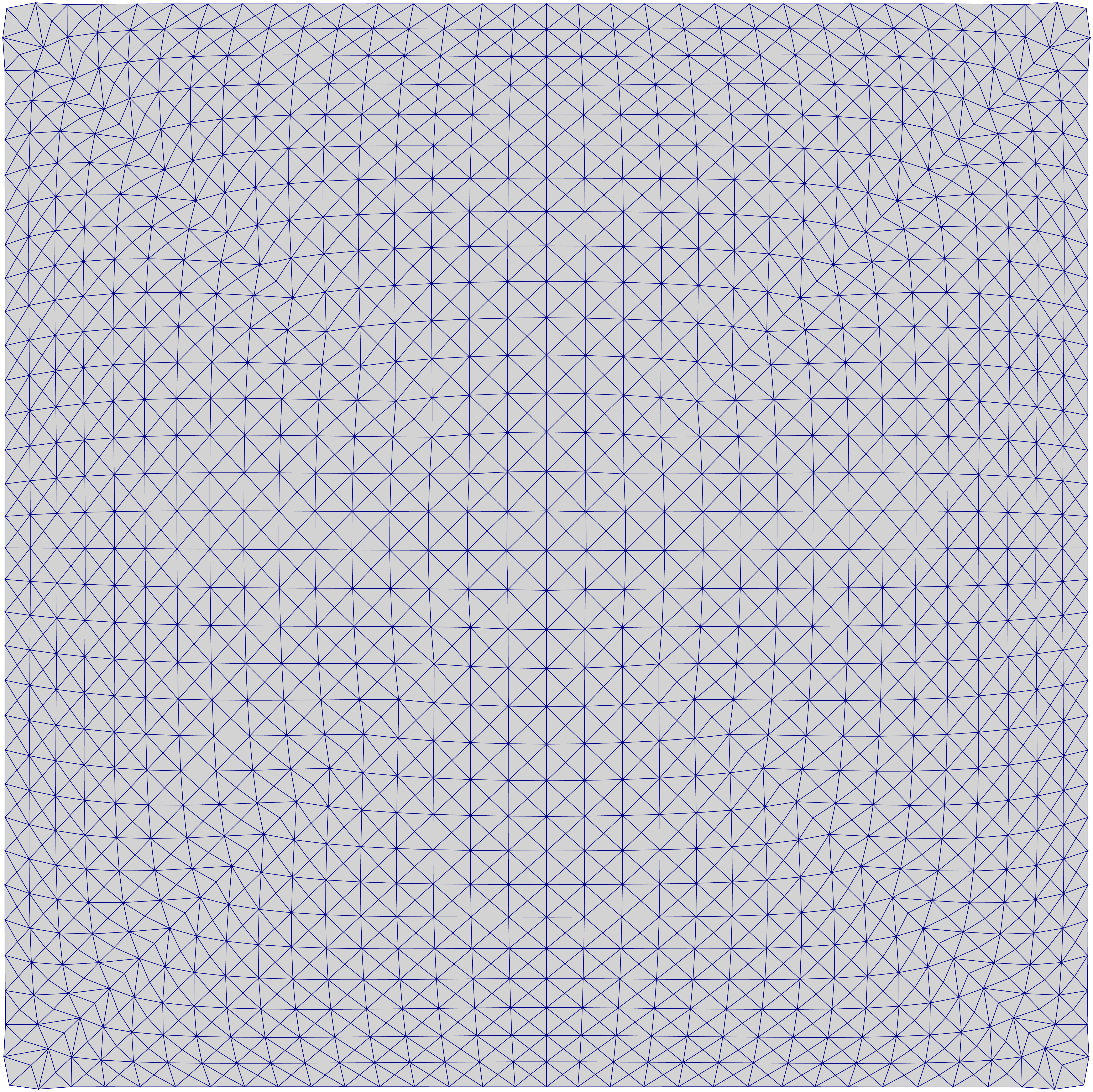}}
    \hspace{0.05\linewidth}
    \adjustbox{trim = {0\width} {0\height} {0.5\width} {.\height}, clip ,width = .225\linewidth }{
    \includegraphics[width=.35\linewidth]{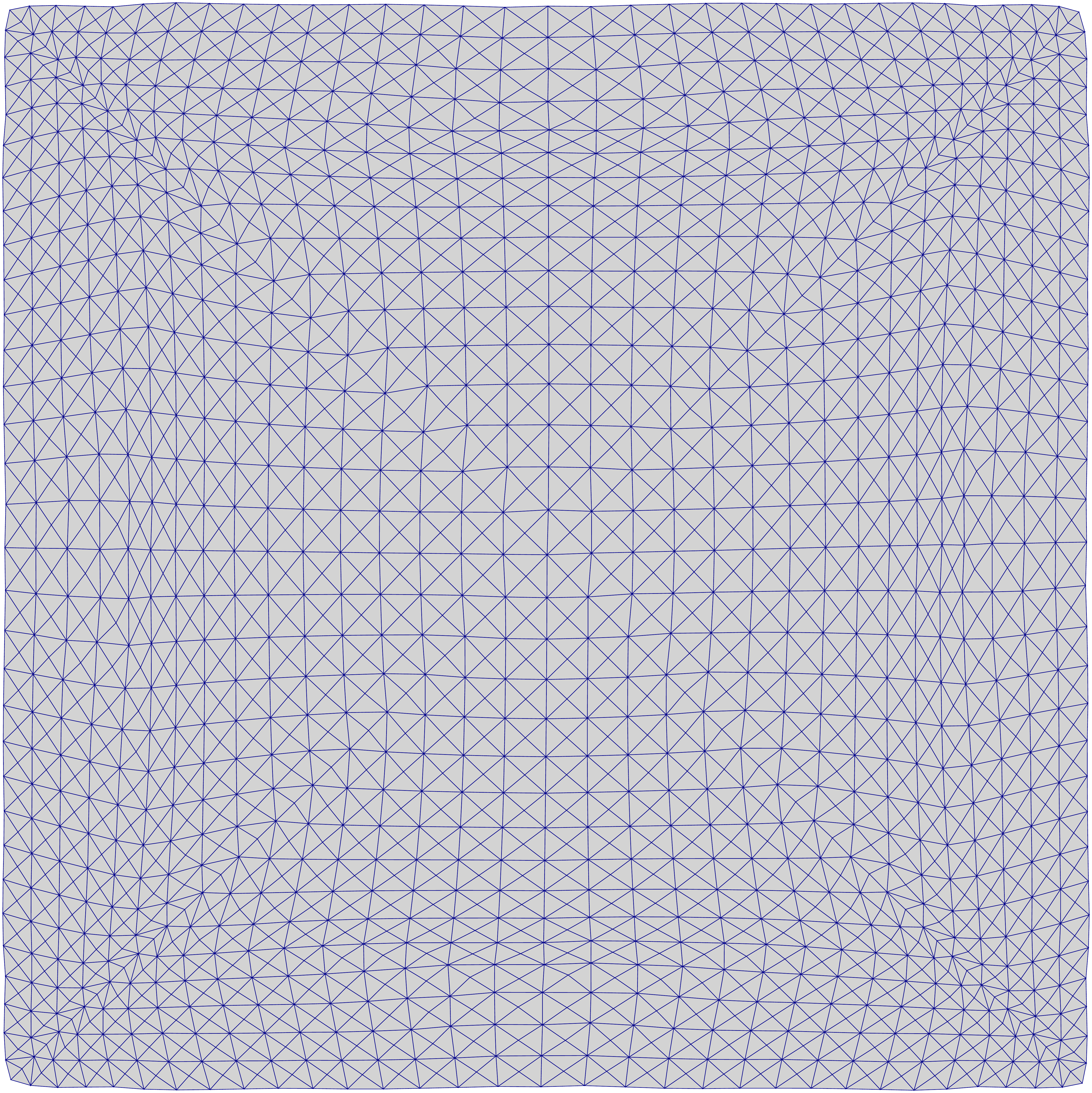}}
    \adjustbox{trim = {0.5\width} {0\height} {0\width} {.\height}, clip ,width = .225\linewidth }{
    \includegraphics[width=.35\linewidth]{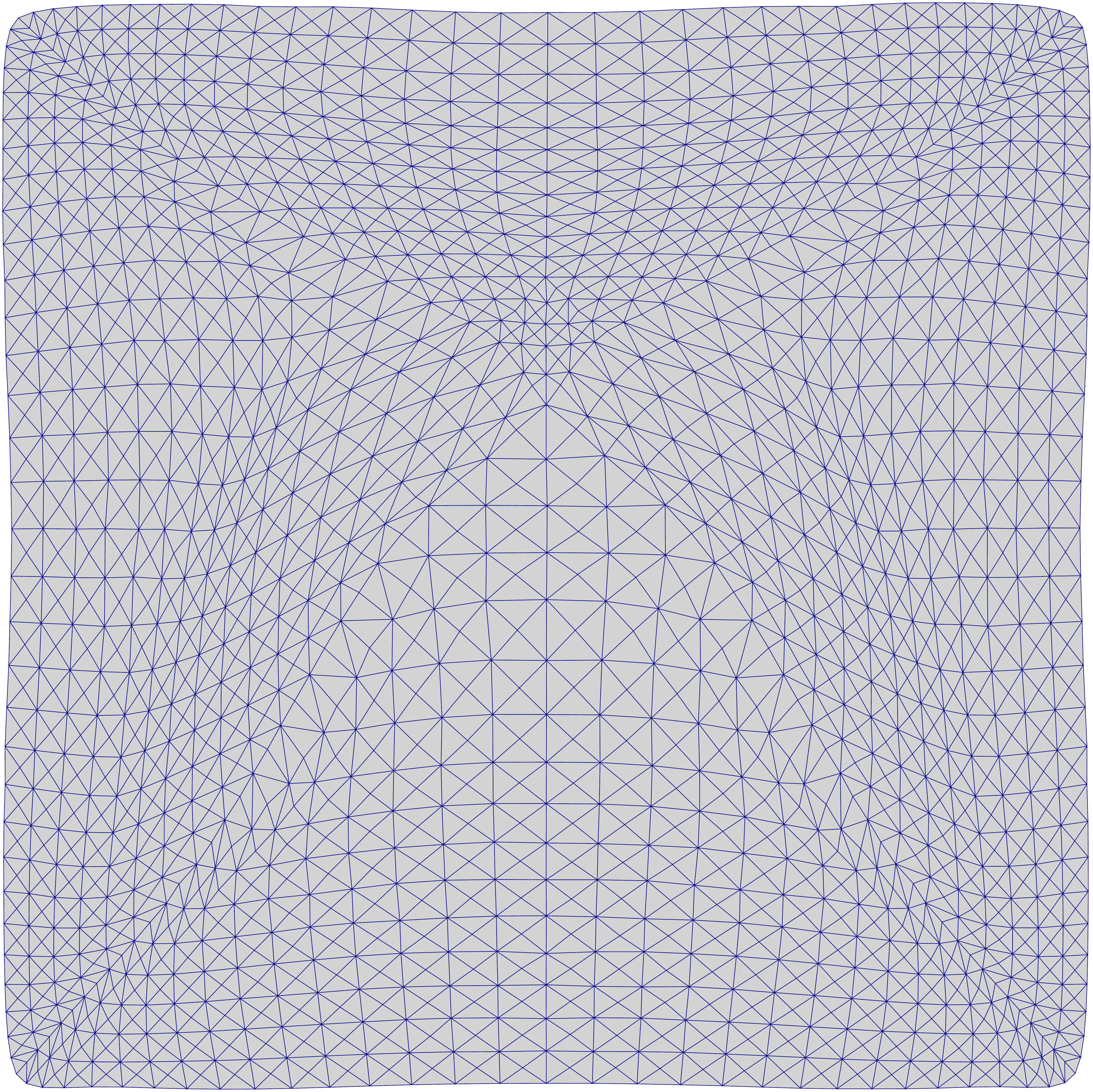}}
    \caption{The meshes of $\Omega$ for the final domains produced in the second \emph{No PDE} experiment in Section \ref{sec:experiment:NOPDE:Experiment2}: left to right, $p=2,4,\infty$ and second order.
    Due to symmetry of the result, we show only a half of each mesh.
    Generally, the first order methods provide a rather good approximation of the expected minimising shape of a square with $p=\infty$ having the most regular triangles around the corners.
    The Newton method does not quite form the corners we expect.}
    \label{fig:experiment:NOPDE:Experiment2:meshes}
\end{figure}

\subsection{A Poisson problem}\label{sec:application:poisson}

\subsubsection{Poisson experiment 1}\label{sec:experiment:Poisson:Experiment1}
For our first experiment we consider $j(x,y) = y$ and $F(x) =2.5 (x_1 + 0.5 - x_2^2)^2 + x_1^2 + x_2^2 - 1$.
This has appeared in \cite{EtlHerzLoa20,HerLoa21}, for example, as a benchmark for the comparisons of shape optimisation algorithms.
For the Newton direction we take $t= 0.125$.
The minimising shape is not explicitly known, however it appears to be a shape not so dissimilar to a kidney.
Similarly, the energy of a minimiser is not known.

We start with an approximation of an ellipse with semiaxes $\frac{2}{\sqrt{\pi}}$ and $\frac{1}{\sqrt{\pi}}$ centred at the origin.
The triangulation of the domain and hold-all is displayed in Figure \ref{fig:experiment:Poisson:Experiment1:InitialDomain}.
In Figure \ref{fig:experiment:Poisson:Experiment1:graphs}, the energy of shapes along the minimising sequences we produce are given.
\begin{figure}\centering
    \begin{subfigure}[b]{.49\linewidth}
    \centering
    \includegraphics[width=.8\linewidth]{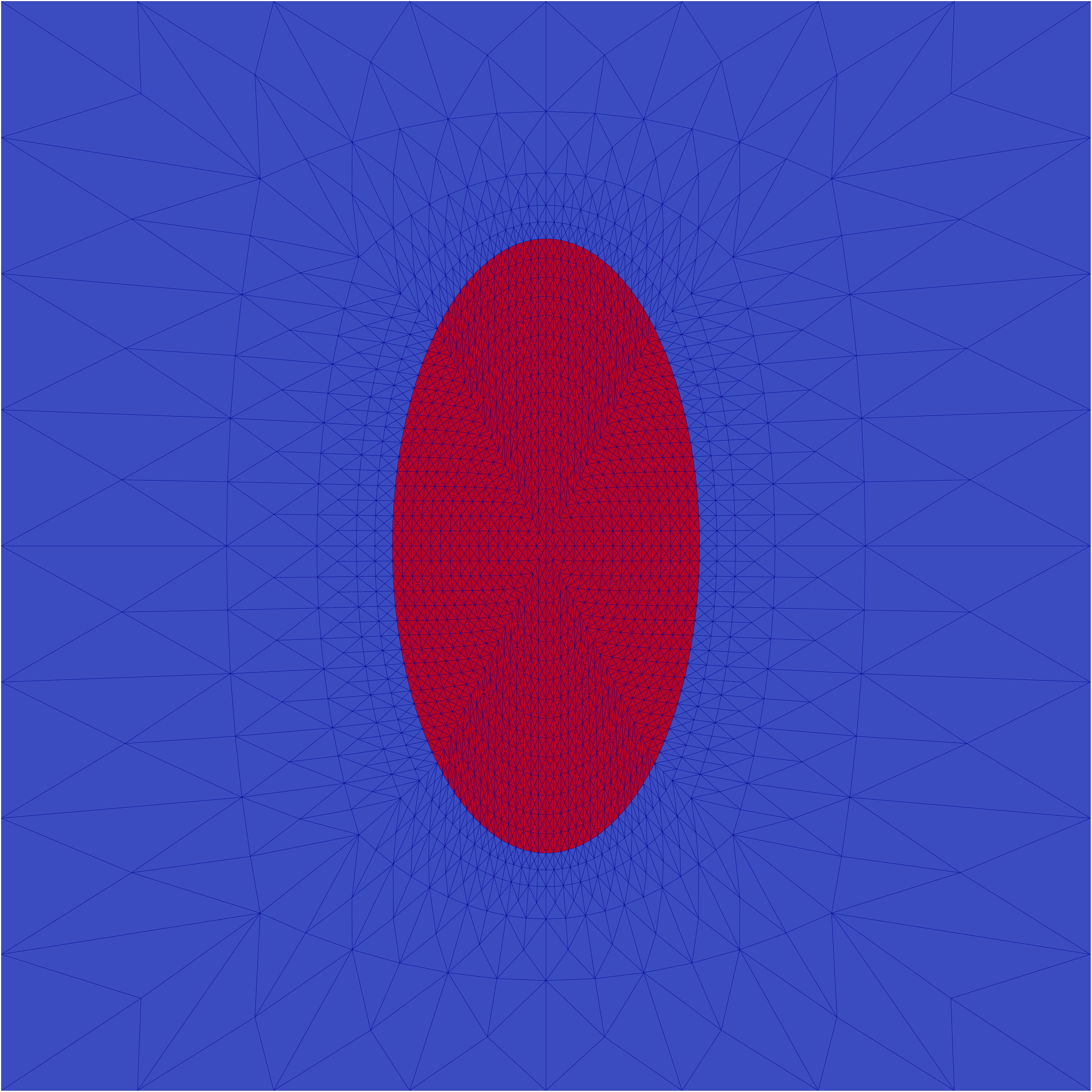}
    \caption{Initial domain for the first \emph{Poisson} experiment in Section \ref{sec:experiment:Poisson:Experiment1}, an approximation of the ellipse with semiaxes $\frac{2}{\sqrt{\pi}}$ and $\frac{1}{\sqrt{\pi}}$ at the origin, is in red and the hold-all, $(-2,2)^2$ in blue.}
    \label{fig:experiment:Poisson:Experiment1:InitialDomain}
    \end{subfigure}
    \begin{subfigure}[b]{.49\linewidth}
    \centering
    \includegraphics[width=1\linewidth]{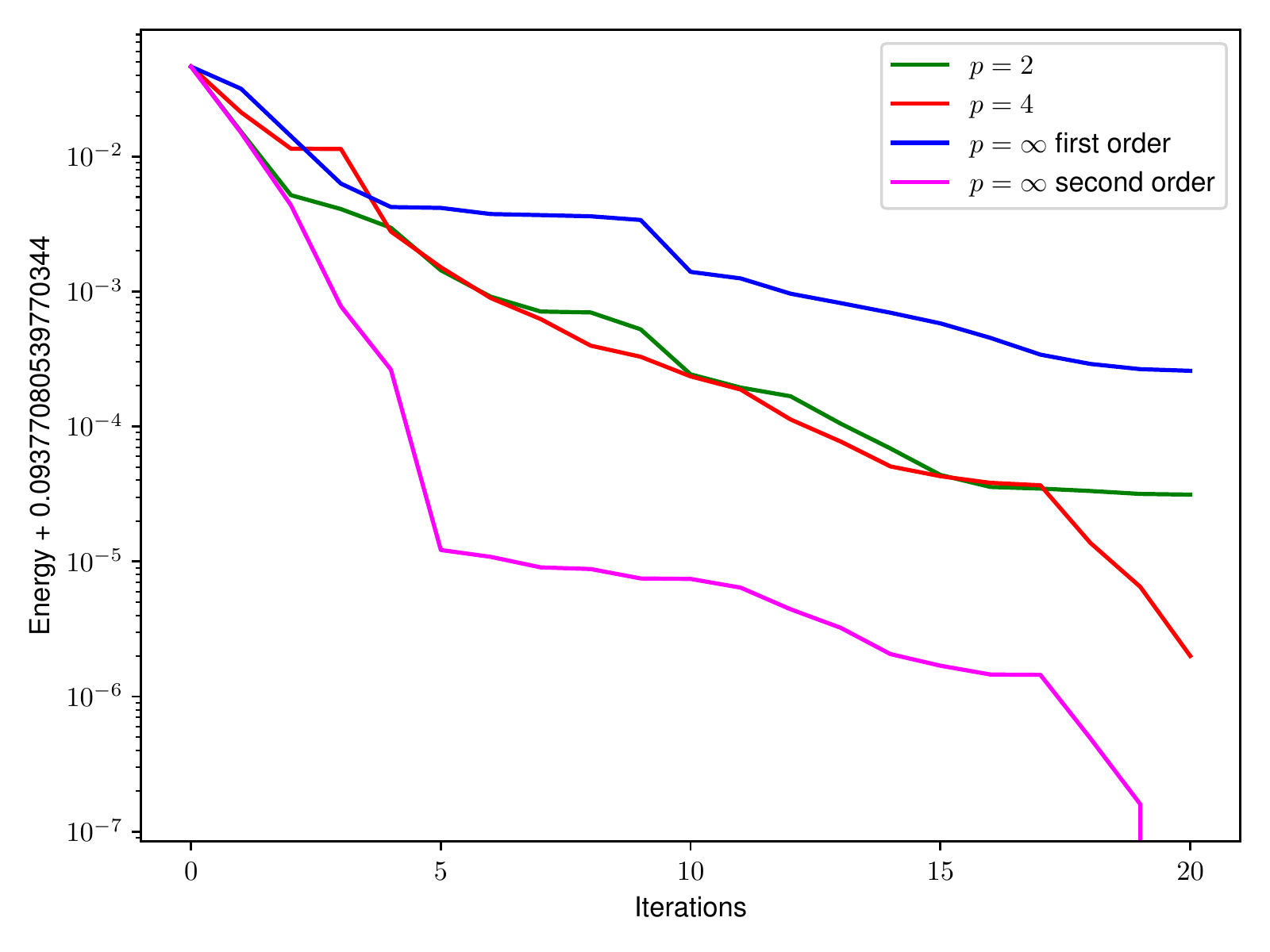}
    \caption{Graph of the energy for the iterates in the first \emph{Poisson} experiment in Section \ref{sec:experiment:Poisson:Experiment1}.
    We see that $p=\infty$ does not perform as well as the traditional $p=2$ method.
    The Newton method performs well energetically.}
    \label{fig:experiment:Poisson:Experiment1:graphs}
    \end{subfigure}
    \caption{Initial mesh and graph of the energy for the experiment in Section \ref{sec:experiment:Poisson:Experiment1}. }
\end{figure}
In Figure \ref{fig:experiment:Poisson:Experiment1:meshes}, the meshes for the final domains $\Omega$ for each of the methods are given.
\begin{figure}
    \vspace{-.45\linewidth} 
    \centering
    \adjustbox{trim = {0\width} {0.5\height} {0\width} {-1\height}, clip ,width = .35\linewidth }{\includegraphics[width=.35\linewidth]{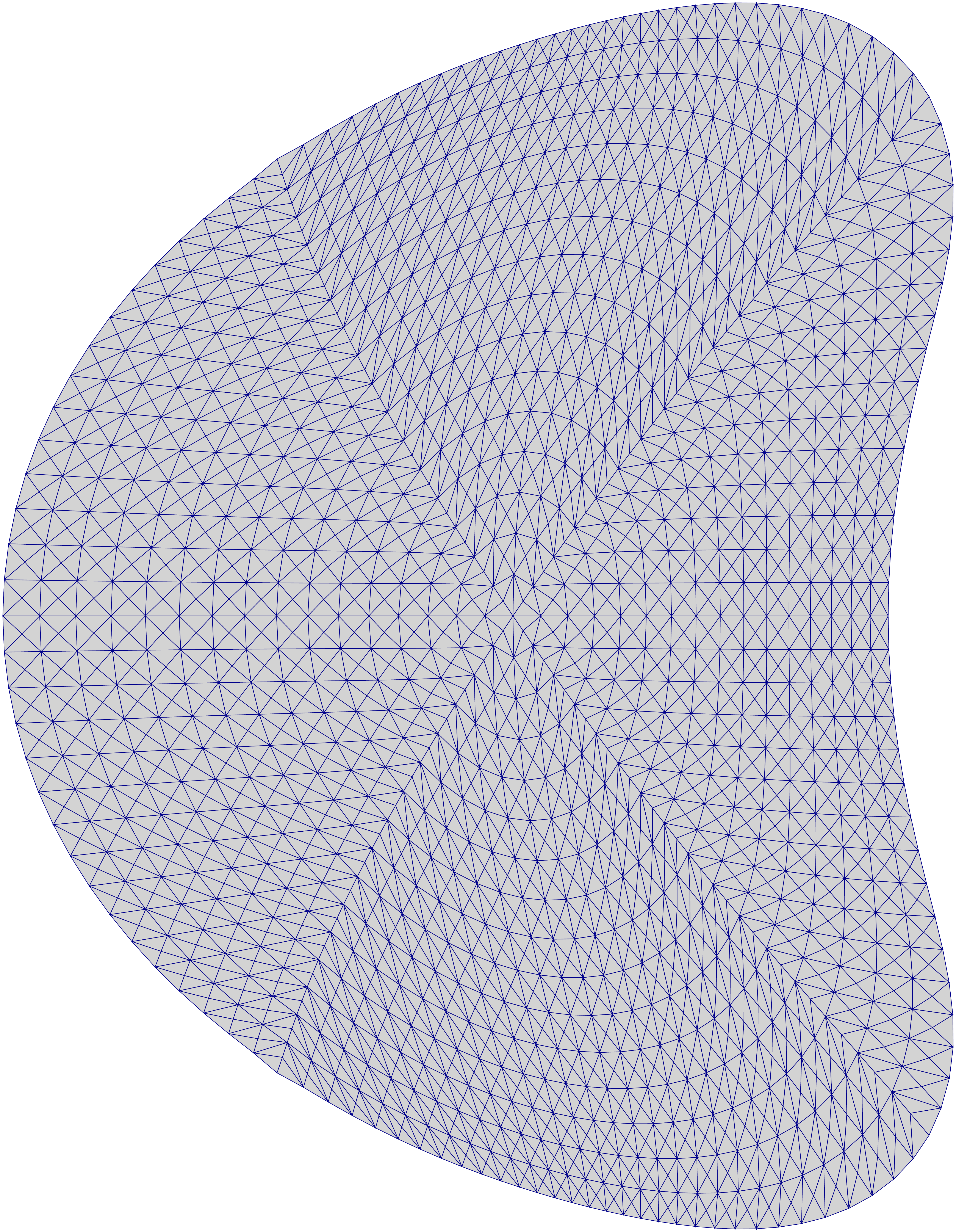}}
    \hspace{0.05\linewidth}
    \adjustbox{trim = {0\width} {0.5\height} {0\width} {-1\height}, clip ,width = .35\linewidth }{\includegraphics[width=.35\linewidth]{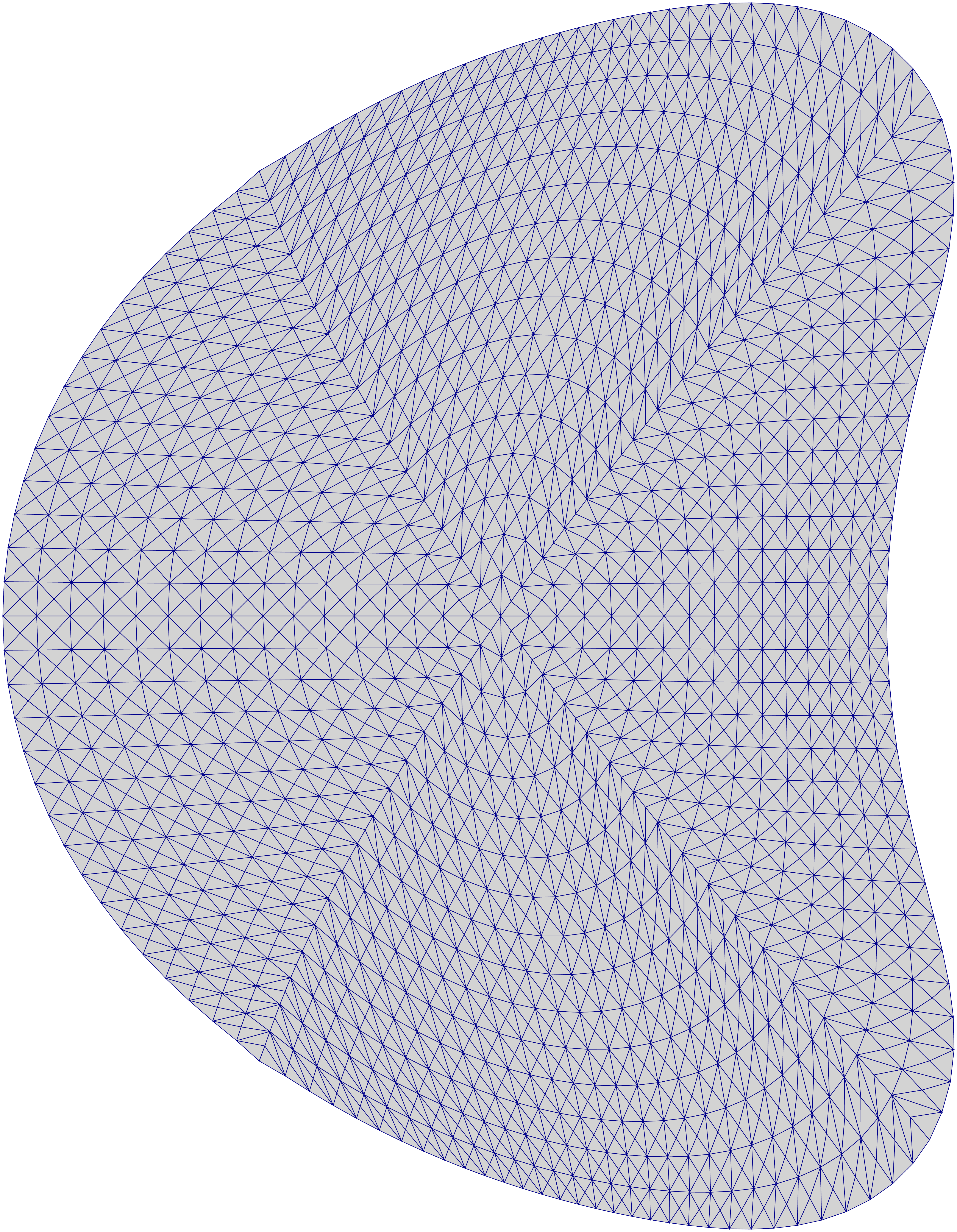}}
    \\
    \adjustbox{trim = {0\width} {0\height} {0\width} {.5\height}, clip ,width = .35\linewidth }{\includegraphics[width=.35\linewidth]{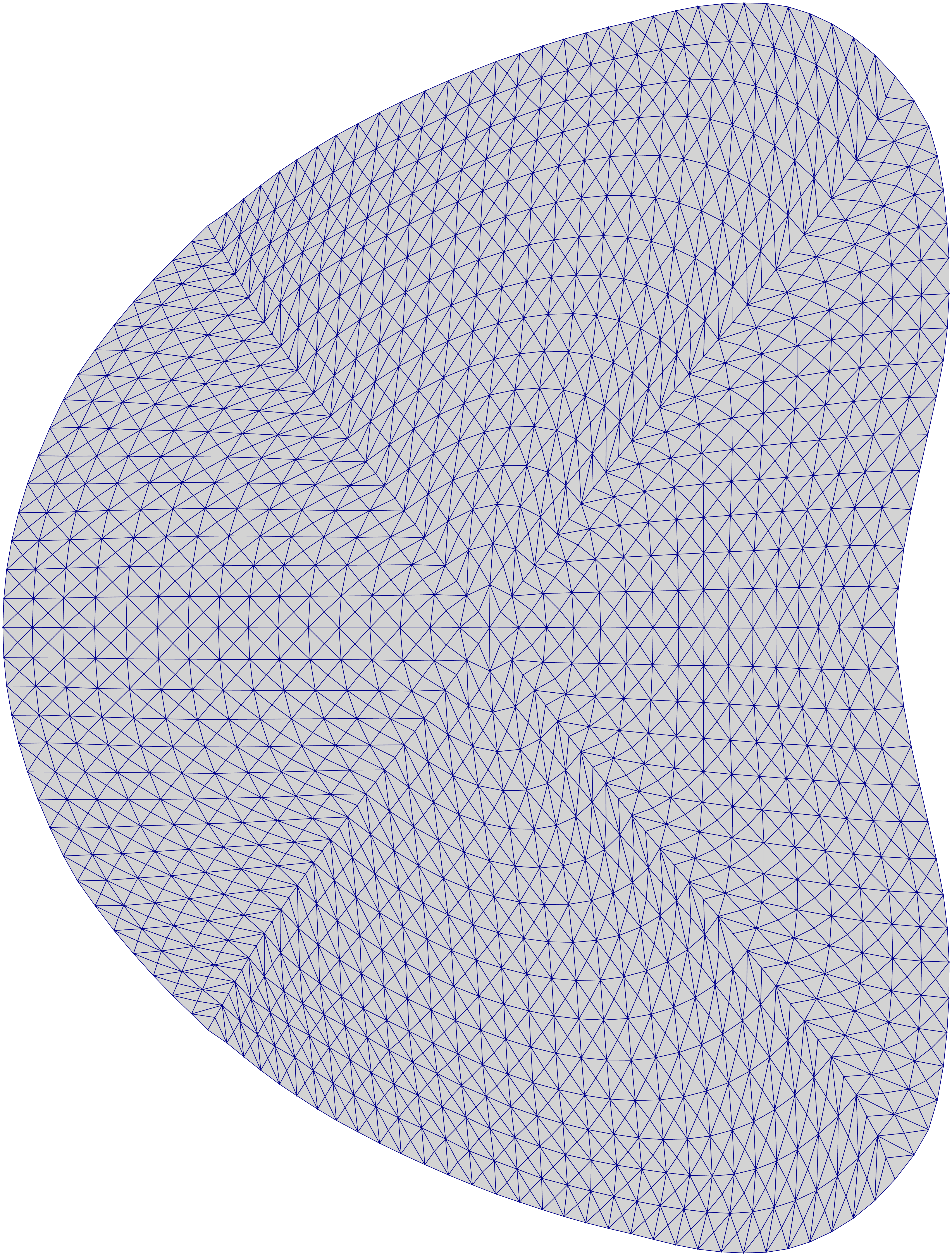}}
    \hspace{0.05\linewidth}
    \adjustbox{trim = {0\width} {0\height} {0\width} {.5\height}, clip ,width = .35\linewidth }{
    \includegraphics[width=.35\linewidth]{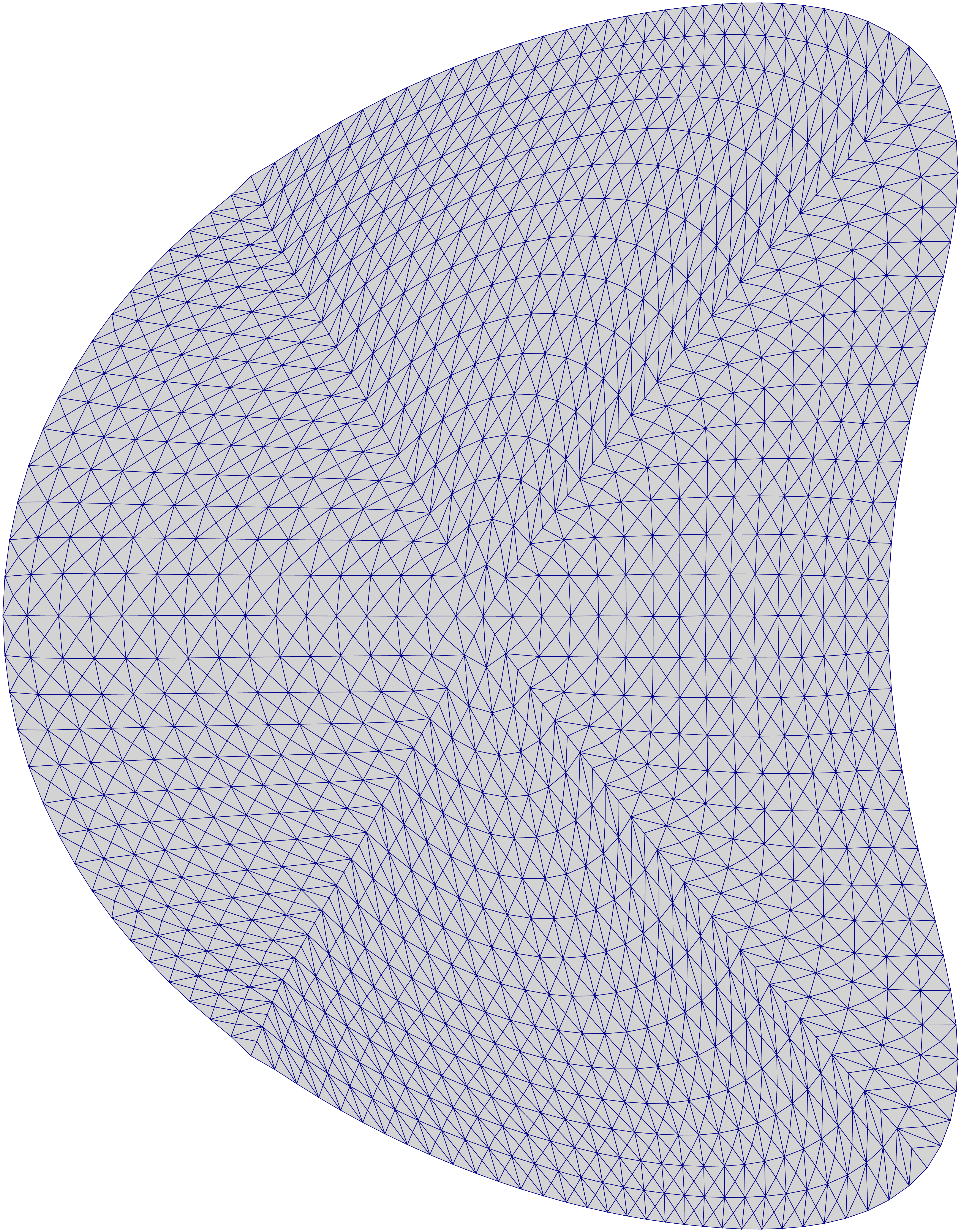}}
    \caption{The meshes of $\Omega$ for the final domains produced in the first \emph{Poisson} experiment in Section \ref{sec:experiment:Poisson:Experiment1}: top left to bottom, $p=2,4,\infty$ and second order.
    Due to symmetry of the result, we show only a half of each mesh.
    We see that all of the shapes are rather similar, with the $p=\infty$ being a slight outlier.
    The triangles for both $p=\infty$ and the Newton method appear to be more regularly spaced at the boundary.}
    \label{fig:experiment:Poisson:Experiment1:meshes}
\end{figure}

\subsubsection{Poisson experiment 2}\label{sec:experiment:Poisson:Experiment2}
For this experiment we consider $j(x,y) = \frac{1}{2}(y-y_d(x))^2$ where $y_d(x) = \frac{4}{\pi} - |x|^2$ and $F = 1$.
Let us note that $-\Delta y_d = 4 F$.
For the Newton direction we take $t= 0.125$.
This experiment will be equipped with an area constraint that the domain has fixed area $4$ - we will use the same linear constraint on the update direction and projection as in Section \ref{sec:experiment:NOPDE:Experiment2}.
In this setting, we expect the minimiser to be given by the ball of radius $\frac{2}{\sqrt{\pi}}$ at the origin which has energy $\frac{6}{\pi^2}$.

We start with the square $(-1,1)^2$.
The triangulation of the domain and hold-all is displayed in Figure \ref{fig:experiment:Poisson:Experiment2:InitialDomain}.
In Figure \ref{fig:experiment:Poisson:Experiment2:graphs}, the energy of shapes along the minimising sequences we produce are given.
\begin{figure}\centering
    \begin{subfigure}[b]{.49\linewidth}
    \centering
    \includegraphics[width=.8\linewidth]{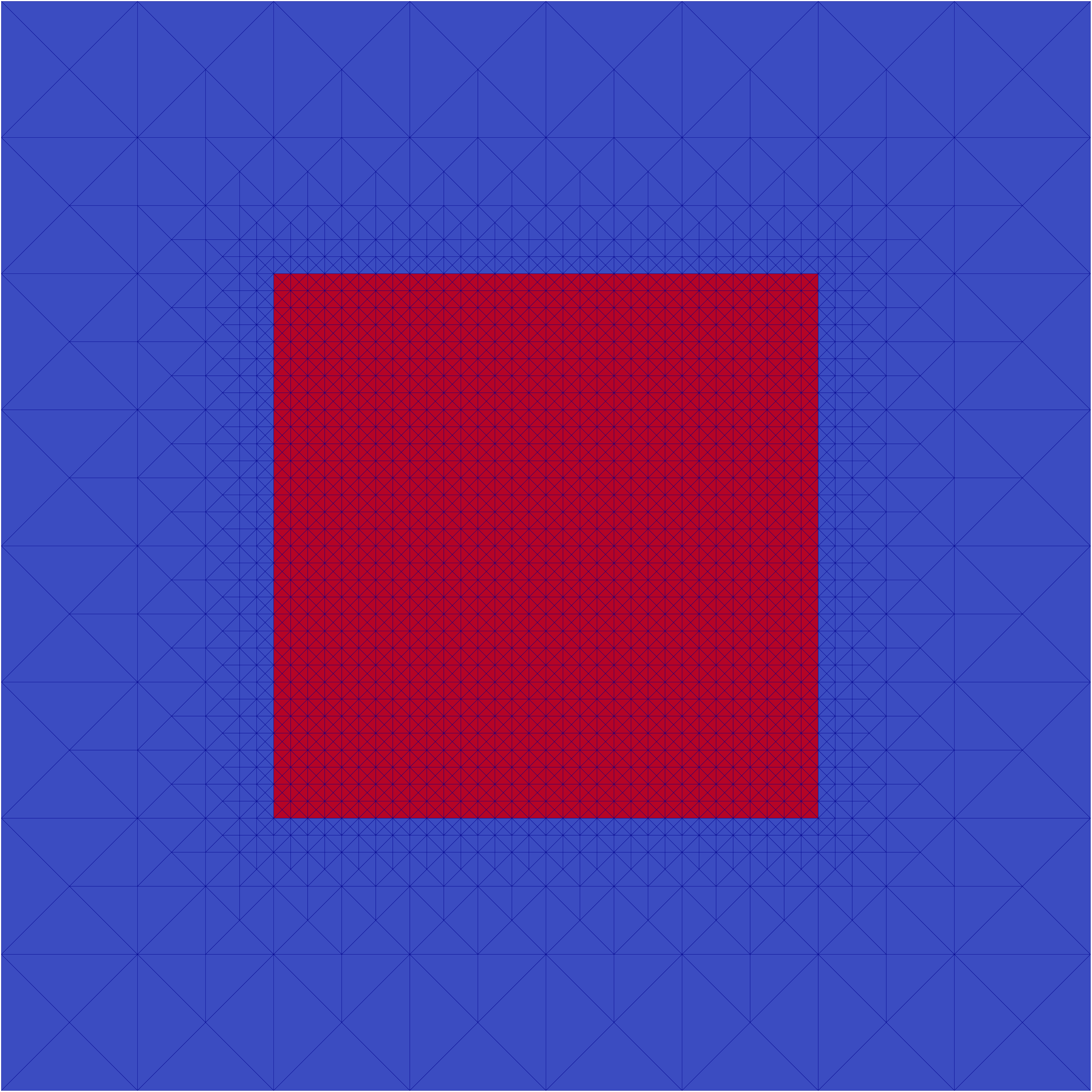}
    \caption{Initial domain for the second \emph{Poisson} experiment in Section \ref{sec:experiment:Poisson:Experiment2}, $(-1,1)^2$ is in red and the hold-all, $(-2,2)^2$ in blue.}
    \label{fig:experiment:Poisson:Experiment2:InitialDomain}
    \end{subfigure}\hfill
    \begin{subfigure}[b]{.49\linewidth}
    \centering
    \includegraphics[width=1\linewidth]{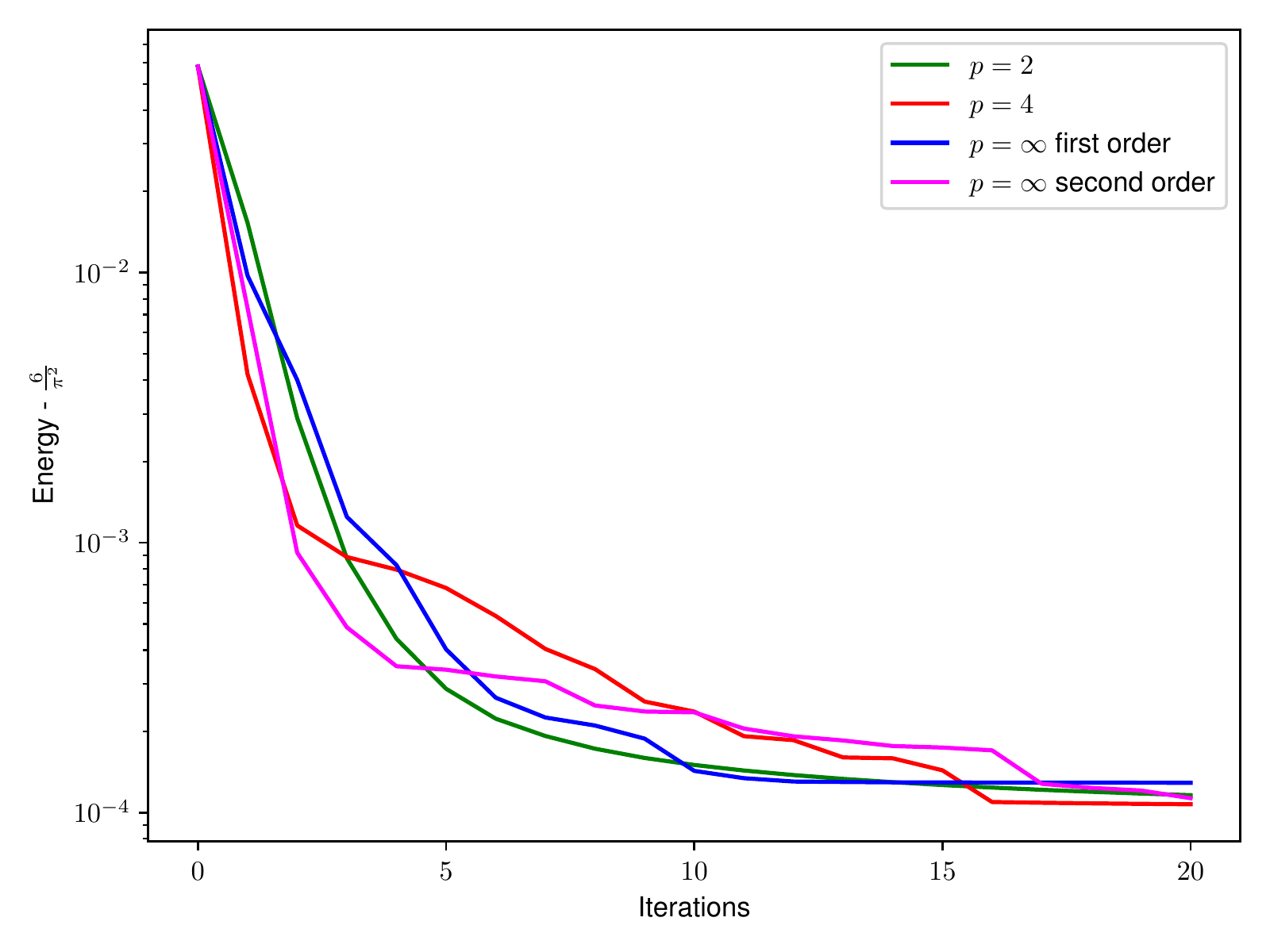}
    \caption{Graph of the energy for the iterates in the second Poisson experiment in Section \ref{sec:experiment:Poisson:Experiment2}.
    We see that all the 
    methods give roughly the same energy.}
    \label{fig:experiment:Poisson:Experiment2:graphs}
    \end{subfigure}
    \caption{Initial mesh and graph of the energy for the experiment in Secetion \ref{sec:experiment:Poisson:Experiment2}.} 
\end{figure}
In Figure \ref{fig:experiment:Poisson:Experiment2:meshes}, the meshes for the final domains $\Omega$ for each of the methods are given.
\begin{figure}
    \vspace{-.5\linewidth} 
    \centering
    \adjustbox{trim = {0\width} {0.5\height} {0.5\width} {-1\height}, clip ,width = .25\linewidth }{\includegraphics[width=.35\linewidth]{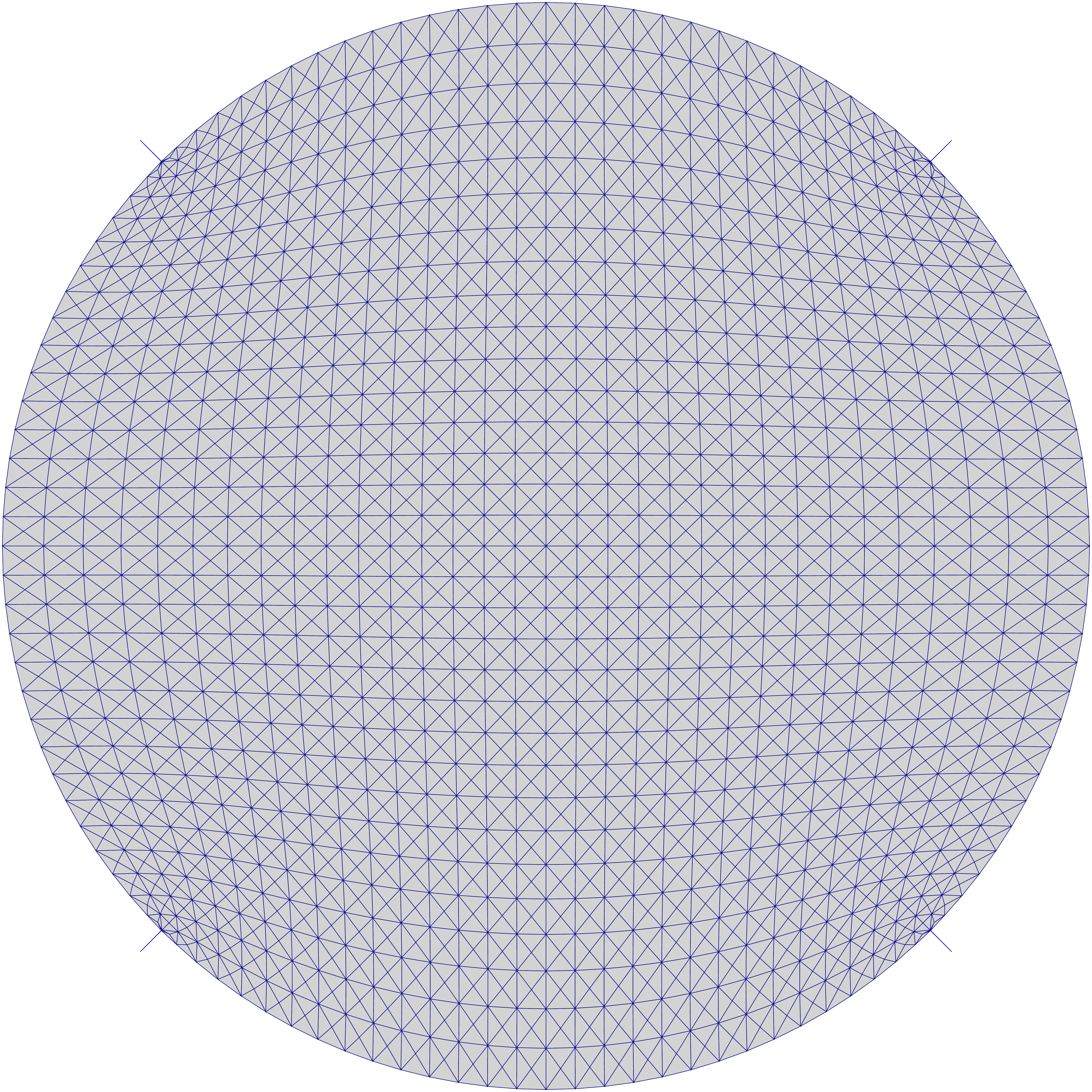}}
    \adjustbox{trim = {0.5\width} {0.5\height} {0\width} {-1\height}, clip ,width = .25\linewidth }{\includegraphics[width=.35\linewidth]{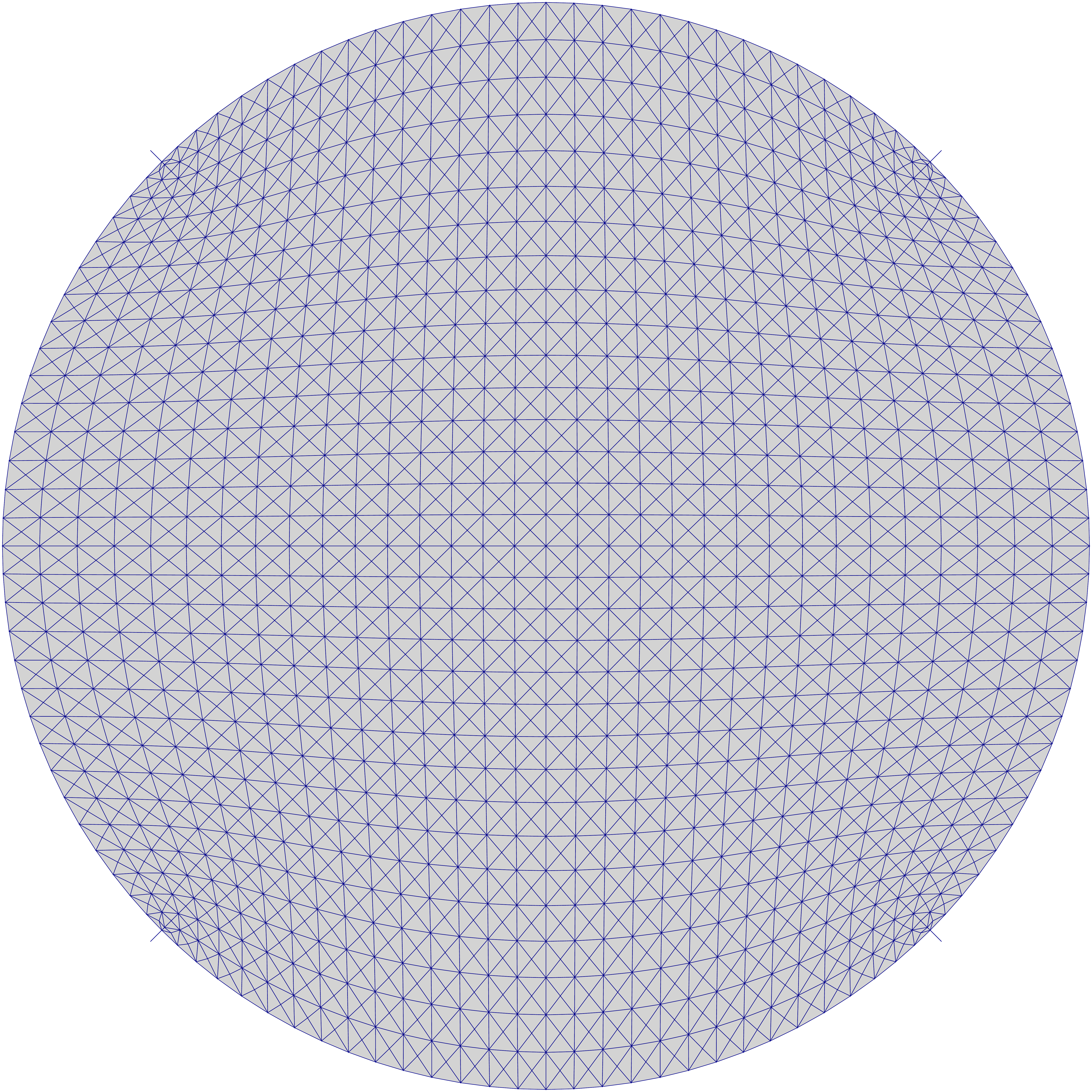}}
    
    \adjustbox{trim = {0\width} {0\height} {0.5\width} {.5\height}, clip ,width = .25\linewidth }{\includegraphics[width=.35\linewidth]{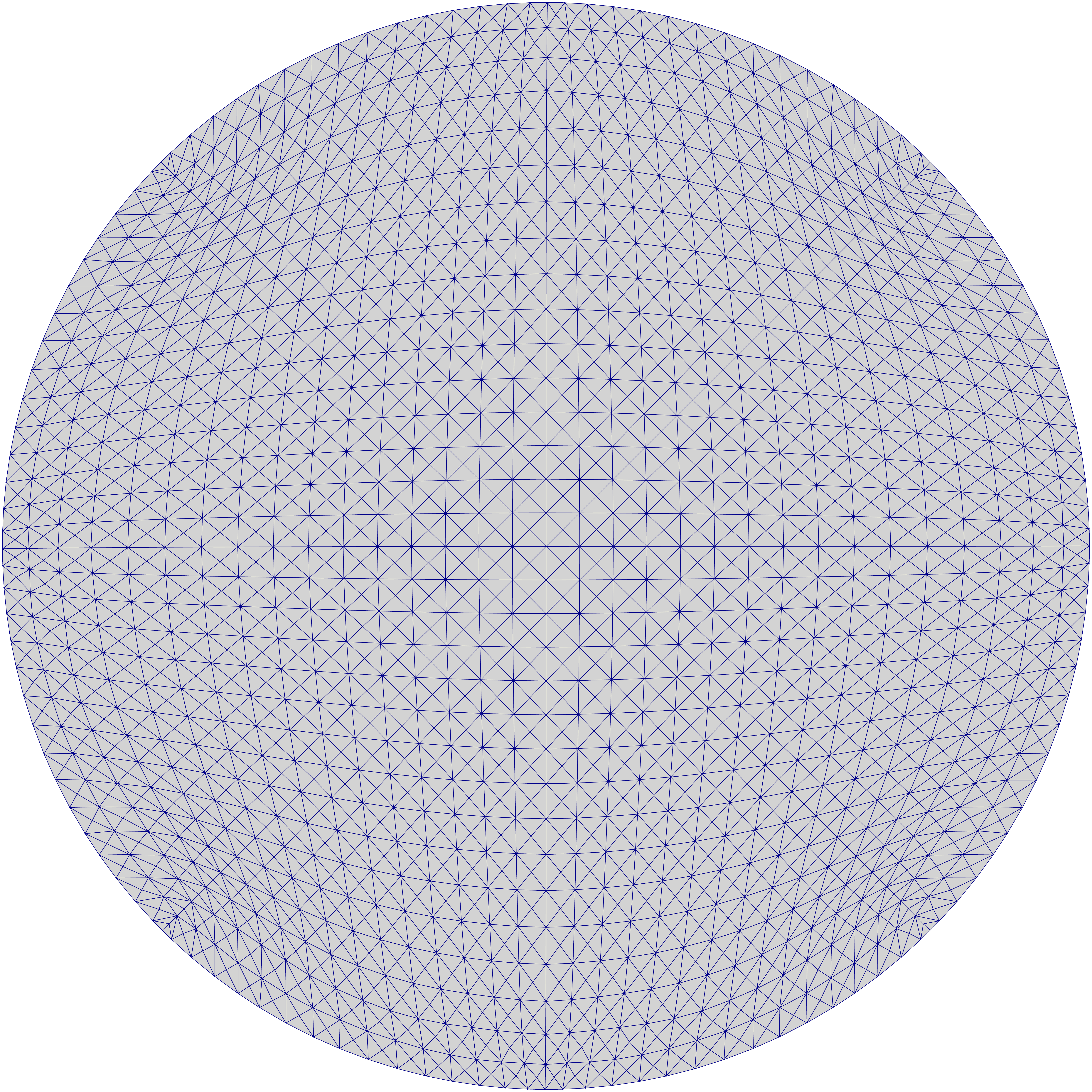}}
    \adjustbox{trim = {0.5\width} {0\height} {0\width} {.5\height}, clip ,width = .25\linewidth }{\includegraphics[width=.35\linewidth]{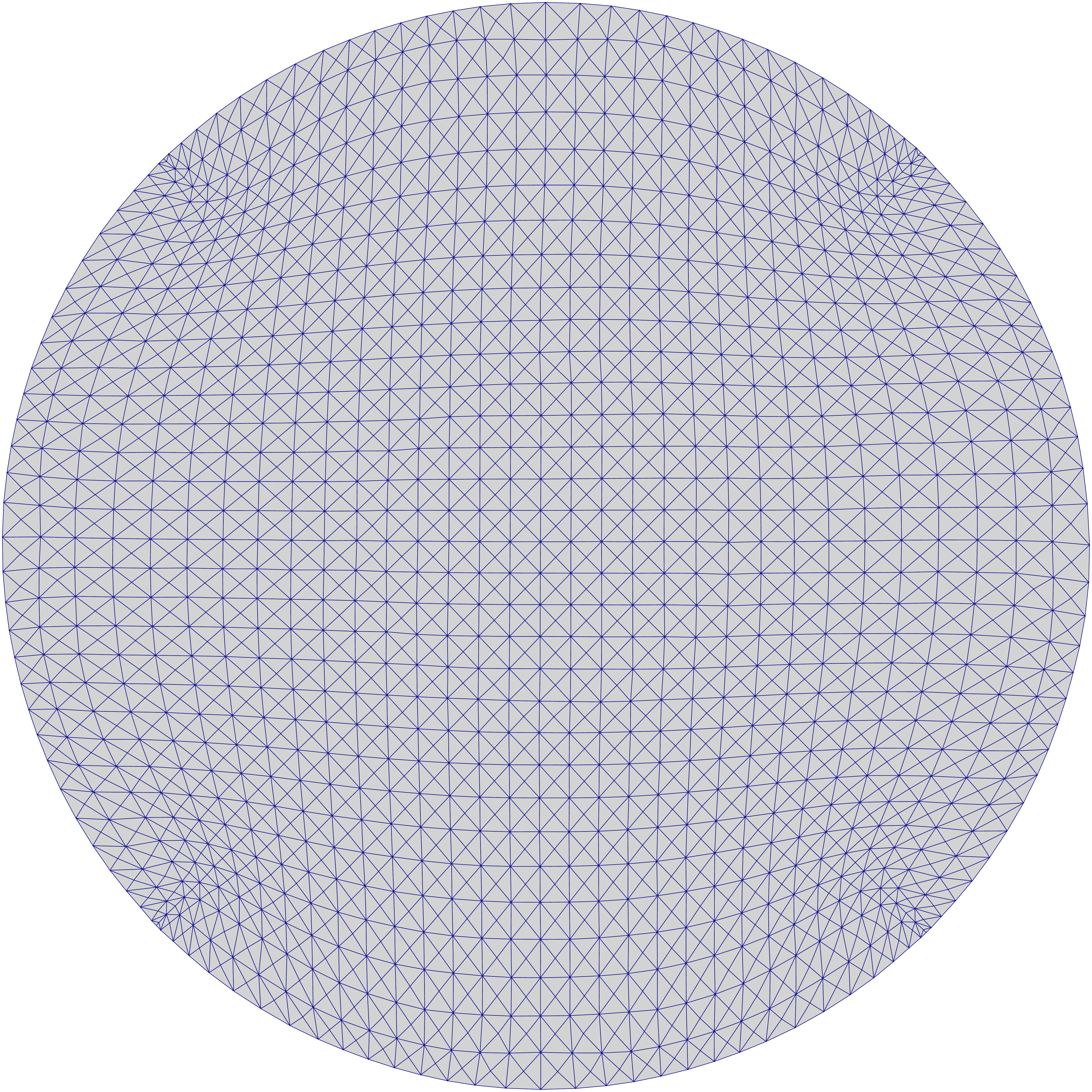}}
    \caption{The meshes of $\Omega$ for the final domains produced in the second \emph{Poisson} experiment in Section \ref{sec:experiment:Poisson:Experiment2}: top left to bottom, $p=2,4,\infty$ and second order.
    Due to symmetry of the result, we show only a quarter of each mesh.
    We see that both $p=2$ and $p=4$ have very degenerate elements where the corners of the original domain were.
    Both $p=\infty$ and Newton methods do not have these degenerate triangles.
    We notice that the triangles which previously made up the corners of the original domain are rather regular in the Newton method.
    For $p=\infty$, there are many triangles which have obtuse angles.
    }
    \label{fig:experiment:Poisson:Experiment2:meshes}
\end{figure}

It is worth noting that when larger values of $t$ were taken during testing, the Newton-type method struggled to perform well.
With the Newton method, once the shape was sufficiently close to a ball, the directions generated by ADMM would rotate the almost-ball by large angles which caused large deformations of the mesh in the hold-all.

\subsection{A coupled Poisson problem}\label{sec:application:coupledPoisson}

For this experiment we will consider $j(x,y) = \frac{1}{2} (y_1-y_d(x) )^2$ where $y_d(x) = 0.05 + (1-x_1^2)^3 (1-x_2^2)^3$ and
$F(x) = \Delta^2 \left( (1-x_1^2)^3 (1-x_2^2)^3 \right)$.
For the Newton direction we take $t= 0.0625$.
This experiment will be equipped with an area constraint that the domain has fixed area $4$ - we will use the same linear constraint on the update direction and projection as in Section \ref{sec:experiment:NOPDE:Experiment2}.
One would expect the minimiser to be relatively close to the square $(-1,1)^2$ which should have energy $0.005$.

We start with the square $(-1,1)^2$.
The triangulation of the domain and hold-all is displayed in Figure \ref{fig:experiment:coupledPoisson:Experiment1:InitialDomain}.
In Figure \ref{fig:experiment:coupledPoisson:Experiment1:graphs}, the energy of shapes along the minimising sequences we produce are given.
\begin{figure}\centering
    \begin{subfigure}[b]{.49\linewidth}
    \centering
    \includegraphics[width=.8\linewidth]{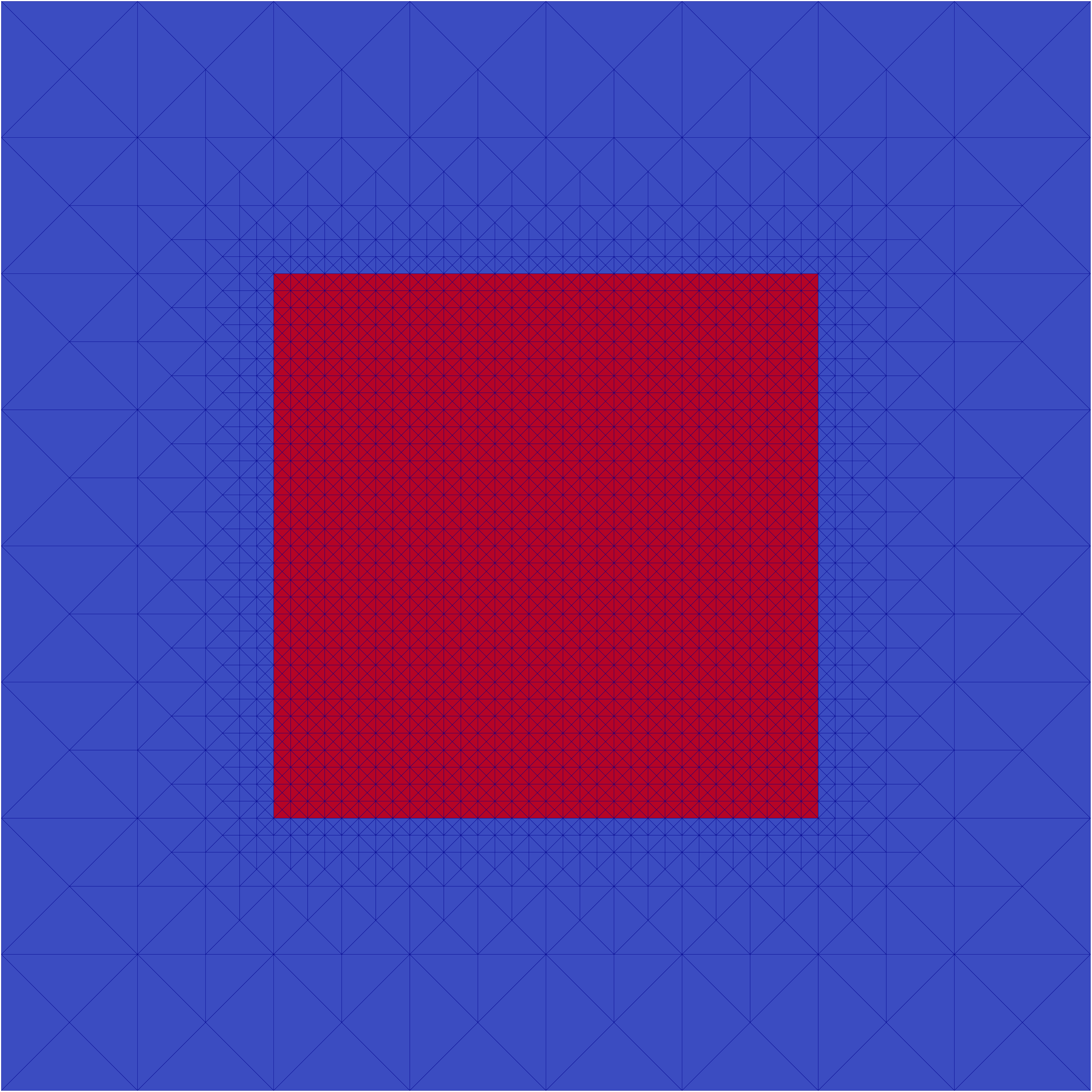}
    \caption{Initial domain for the \emph{coupled Poisson} experiment in Section \ref{sec:application:coupledPoisson}, $(-1,1)^2$ is in red and the hold-all, $(-2,2)^2$ in blue.}
    \label{fig:experiment:coupledPoisson:Experiment1:InitialDomain}
    \end{subfigure}\hfill
    \begin{subfigure}[b]{.49\linewidth}
    \centering
    \includegraphics[width=1\linewidth]{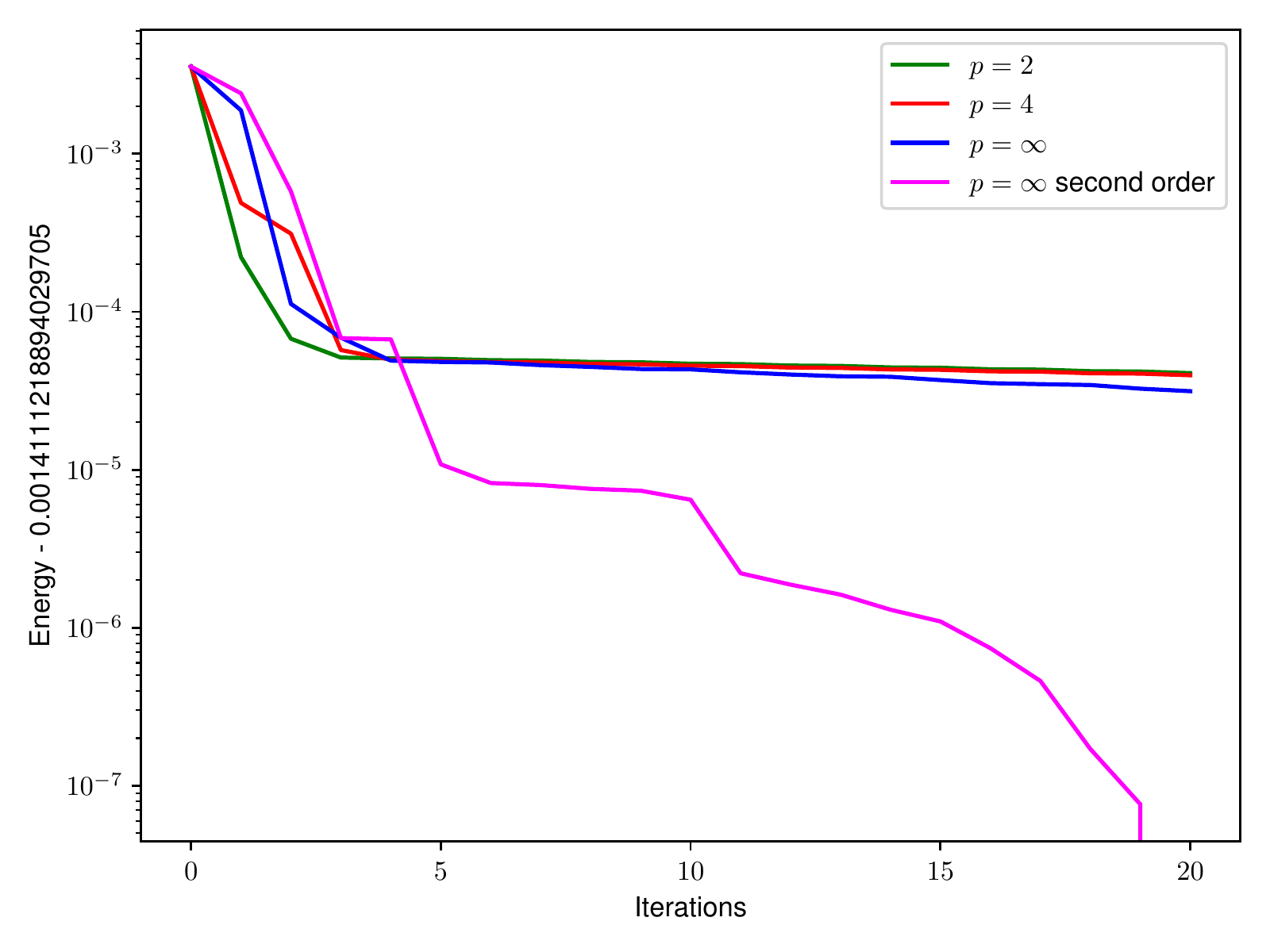}
    \caption{Graph of the energy for the iterates in the \emph{coupled Poisson} experiment in Section \ref{sec:application:coupledPoisson}.
    We see that $p=\infty$ outperforms the finite $p$ experiments, but the Newton method is energetically performing the best.
    }
    \label{fig:experiment:coupledPoisson:Experiment1:graphs}
    \end{subfigure}
    \caption{Initial mesh and graph of the energy for the experiment in Section \ref{sec:application:coupledPoisson}.}
\end{figure}
In Figure \ref{fig:experiment:coupledPoisson:Experiment1:meshes}, the meshes for the final domains $\Omega$ for each of the methods are given.
\begin{figure}
    \vspace{-.5\linewidth} 
    \centering
    \adjustbox{trim = {0\width} {0.5\height} {0.5\width} {-1\height}, clip ,width = .25\linewidth }{\includegraphics[width=.35\linewidth]{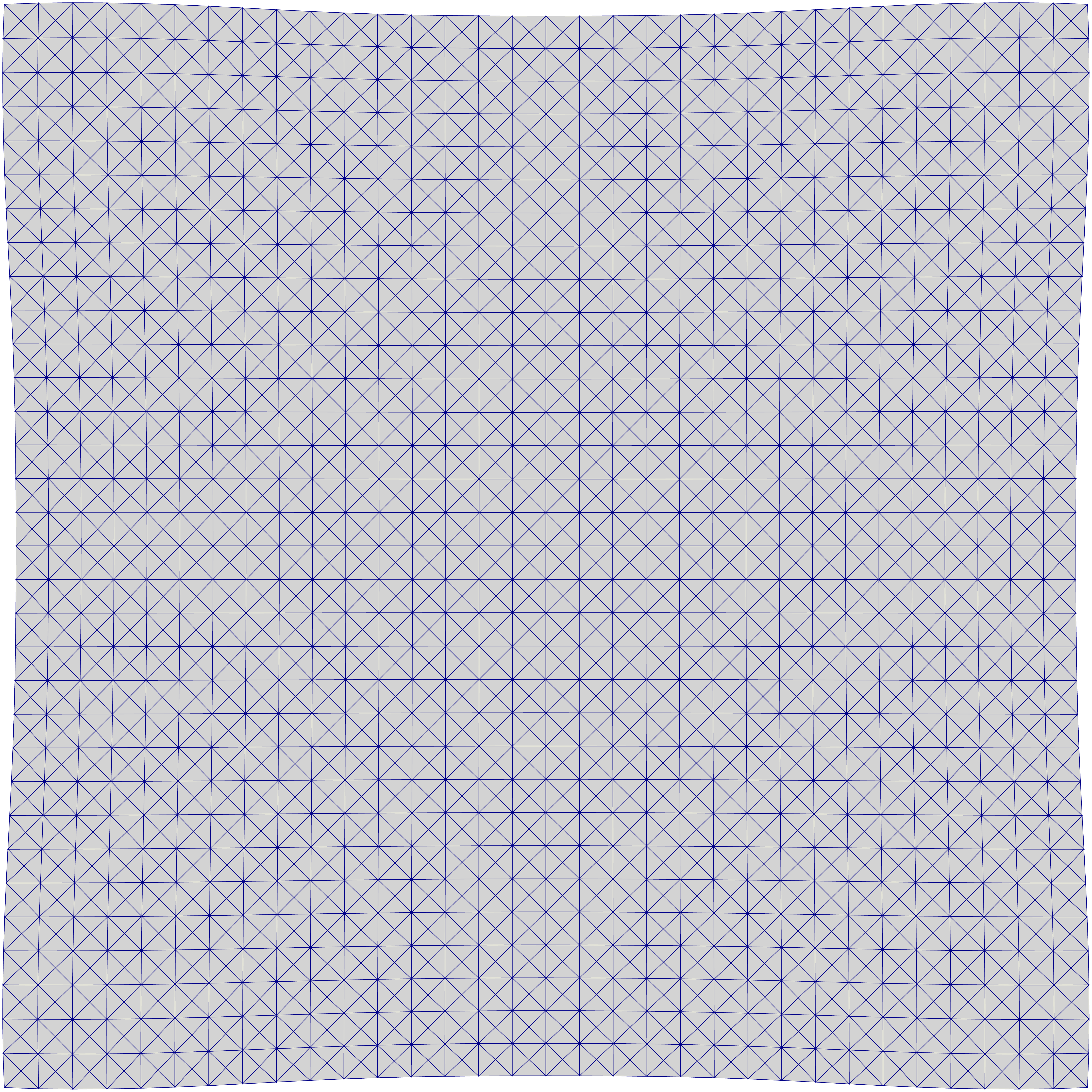}}
    \adjustbox{trim = {0.5\width} {0.5\height} {0\width} {-1\height}, clip ,width = .25\linewidth }{\includegraphics[width=.35\linewidth]{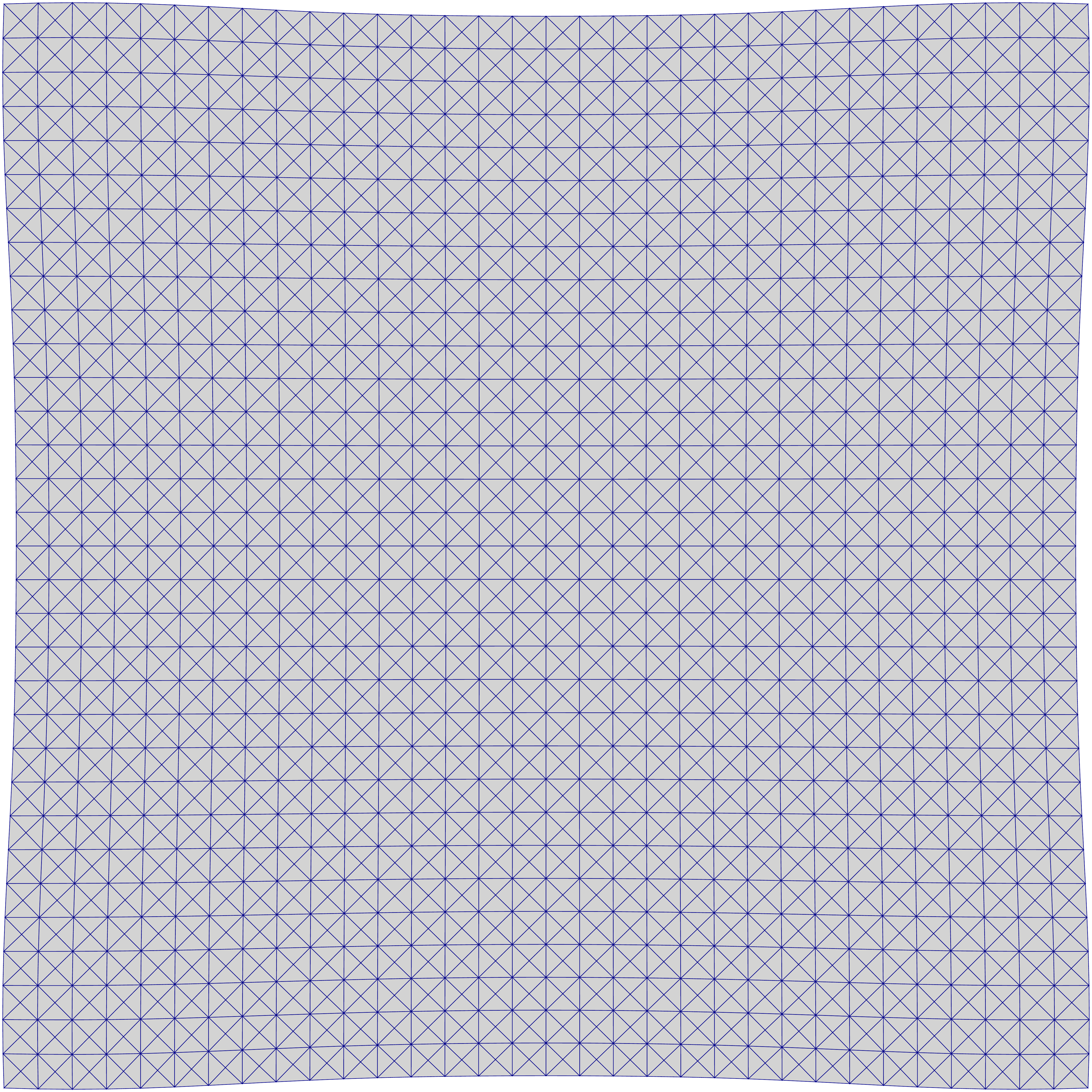}}
    \\
    \adjustbox{trim = {0\width} {0\height} {0.5\width} {.5\height}, clip ,width = .25\linewidth }{\includegraphics[width=.35\linewidth]{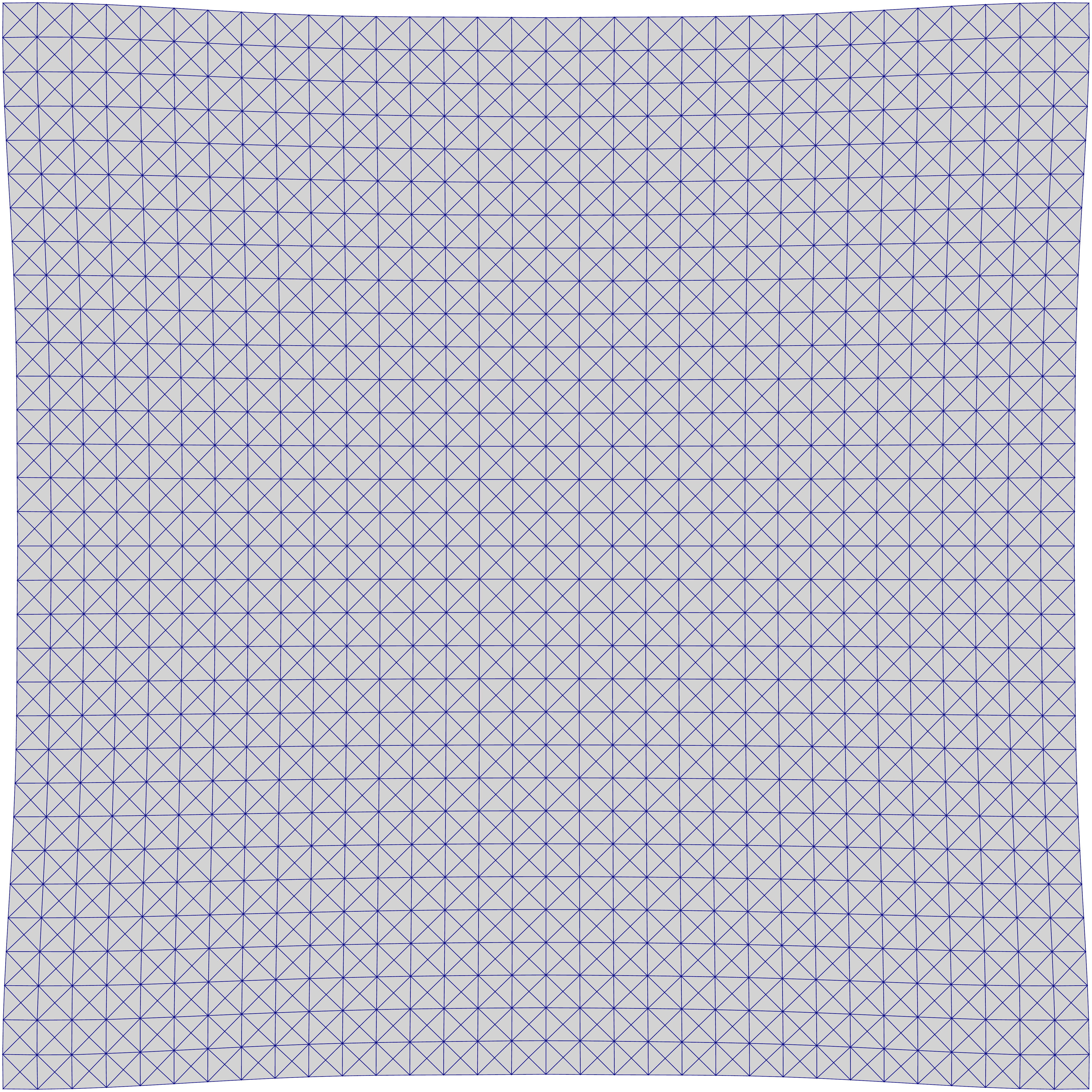}}
    \adjustbox{trim = {0.5\width} {0\height} {0\width} {.5\height}, clip ,width = .25\linewidth }{
    \includegraphics[width=.35\linewidth]{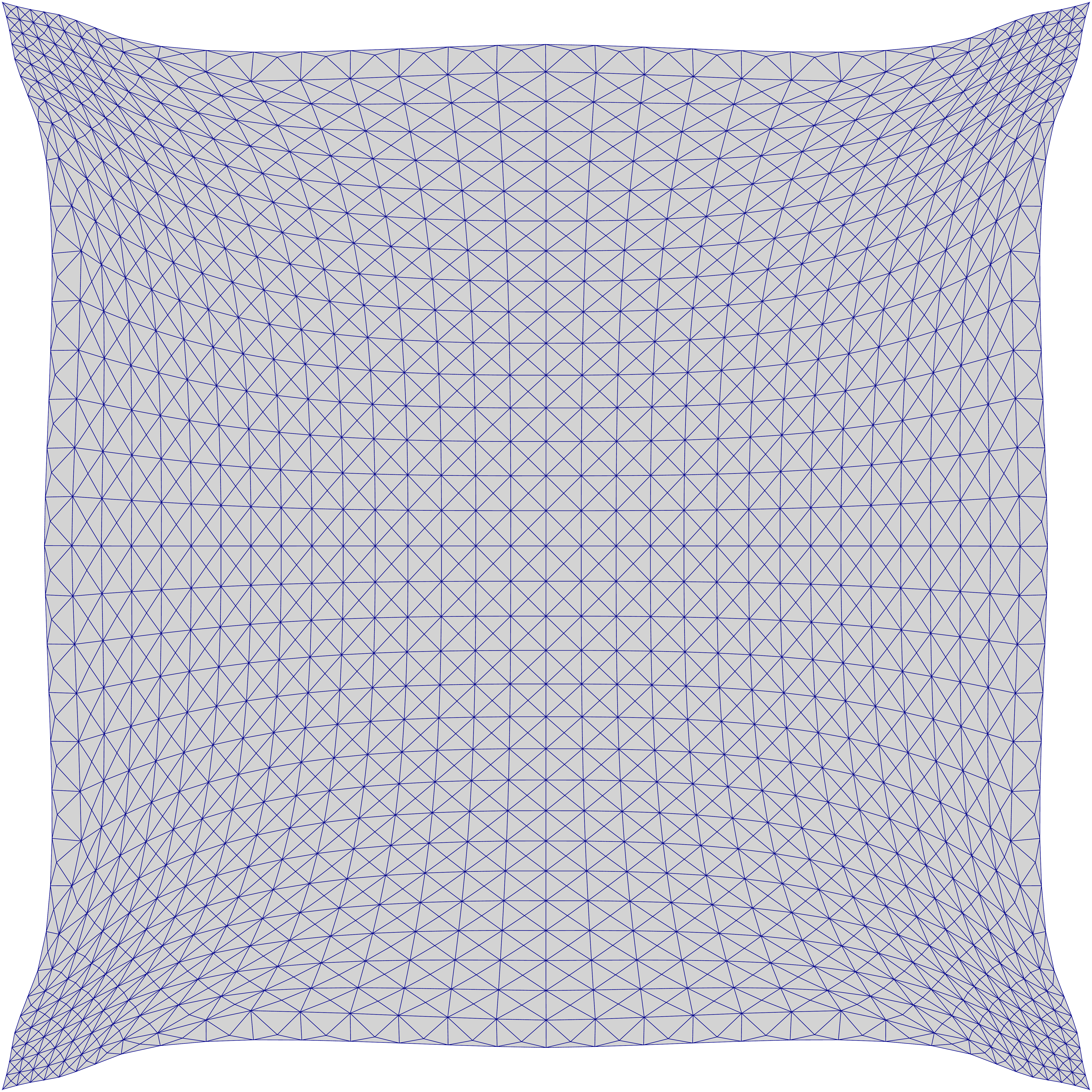}}
  \caption{The meshes of $\Omega$ for the final domains produced in the \emph{coupled Poisson} experiment in Section
    \ref{sec:application:coupledPoisson}: top left to bottom, $p=2,4,\infty$ and second order.
    Due to symmetry of the result, we show only a quarter of each mesh.
    To the eye, the first order methods are all seemingly the same.
    The Newton method appears to have found a more pronounced shape than the others.
    }
    \label{fig:experiment:coupledPoisson:Experiment1:meshes}
\end{figure}

\subsection{Optimisation of the first eigenvalue for the Laplacian}\label{sec:experiment:eval:Experiment1}

We use the function \texttt{eigs} from the module \texttt{sparse.linalg} in \texttt{scipy} \cite{scipy20} to find pairs $(v_h,\lambda_h)\in \R^{N_h}\times \R$ such that $B_h v_h = \lambda_h M_h v_h$, where $B_h\in \R^{N_h\times N_h}$ is the stiffness matrix and $M_h\in \R^{N_h\times N_h}$ is the mass matrix.
This experiment will be equipped with an area constraint that the domain has fixed area $4$ - we will use the same linear constraint on the update direction and projection as in Section \ref{sec:experiment:NOPDE:Experiment2}.
For the Newton direction we take $t= 0.125$.
In this setting, the minimiser is known to be the ball of radius $\frac{2}{\sqrt{\pi}}$ which has $\lambda_1(B_{\frac{2}{\sqrt{\pi}}}) = \beta_{0,1}^2 \frac{\pi}{4}\approx 4.54210$, where $\beta_{0,1}$ is the first zero of the $0^{th}$ Bessel function.

We start with the square $(-1,1)^2$.
The triangulation of the domain and hold-all is displayed in Figure \ref{fig:experiment:eval:Experiment1:InitialDomain}.
In Figure \ref{fig:experiment:eval:Experiment1:graphs}, the energy of shapes along the minimising sequences we produce are given.
\begin{figure}\centering
    \begin{subfigure}[b]{.49\linewidth}
    \centering
    \includegraphics[width=.8\linewidth]{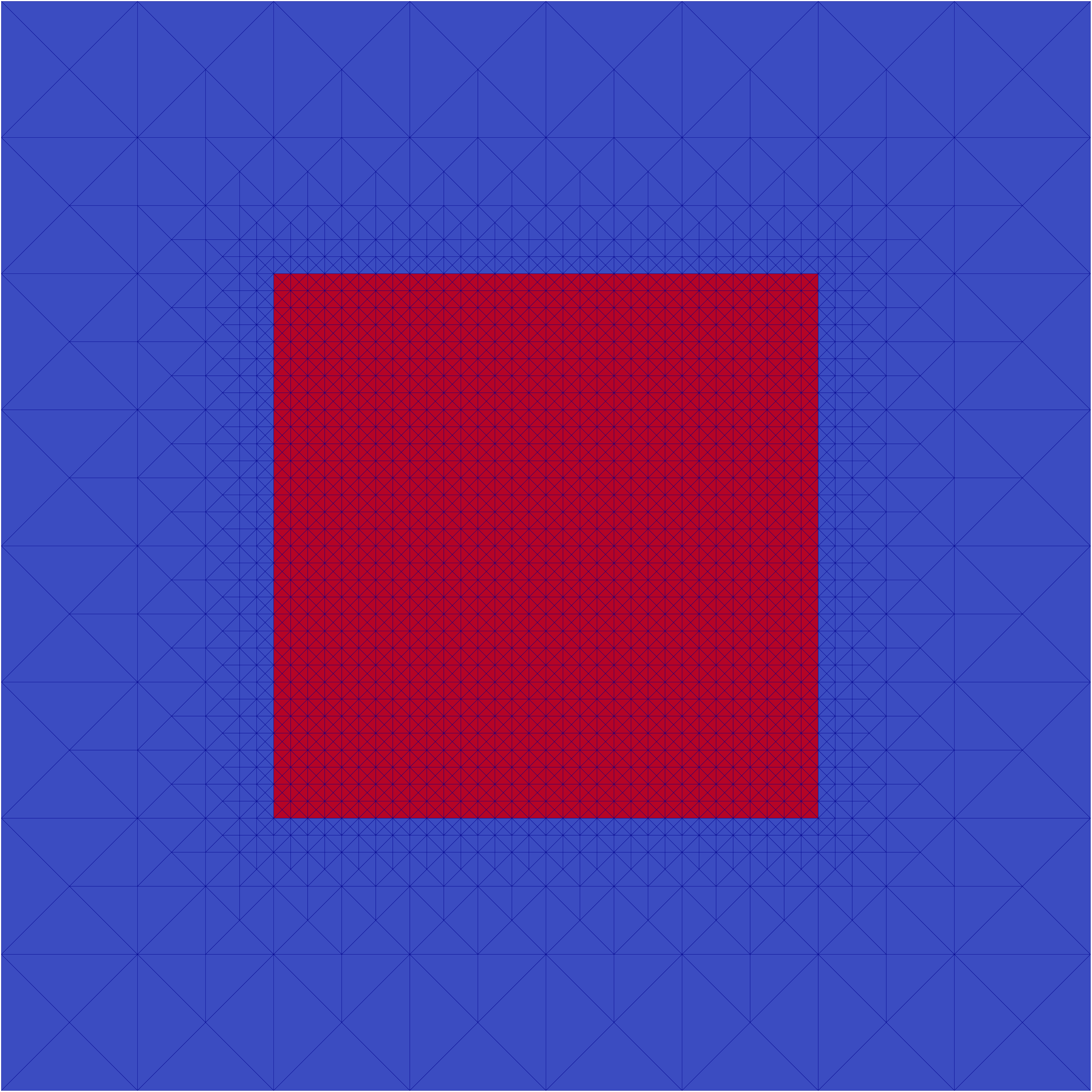}
    \caption{Initial domain for the \emph{Eigenvalue} experiment in Section \ref{sec:experiment:eval:Experiment1}, $(-1,1)^2$ is in red and the hold-all, $(-2,2)^2$ in blue.}
    \label{fig:experiment:eval:Experiment1:InitialDomain}
    \end{subfigure}\hfill
    \begin{subfigure}[b]{.49\linewidth}
    \centering
    \includegraphics[width=1\linewidth]{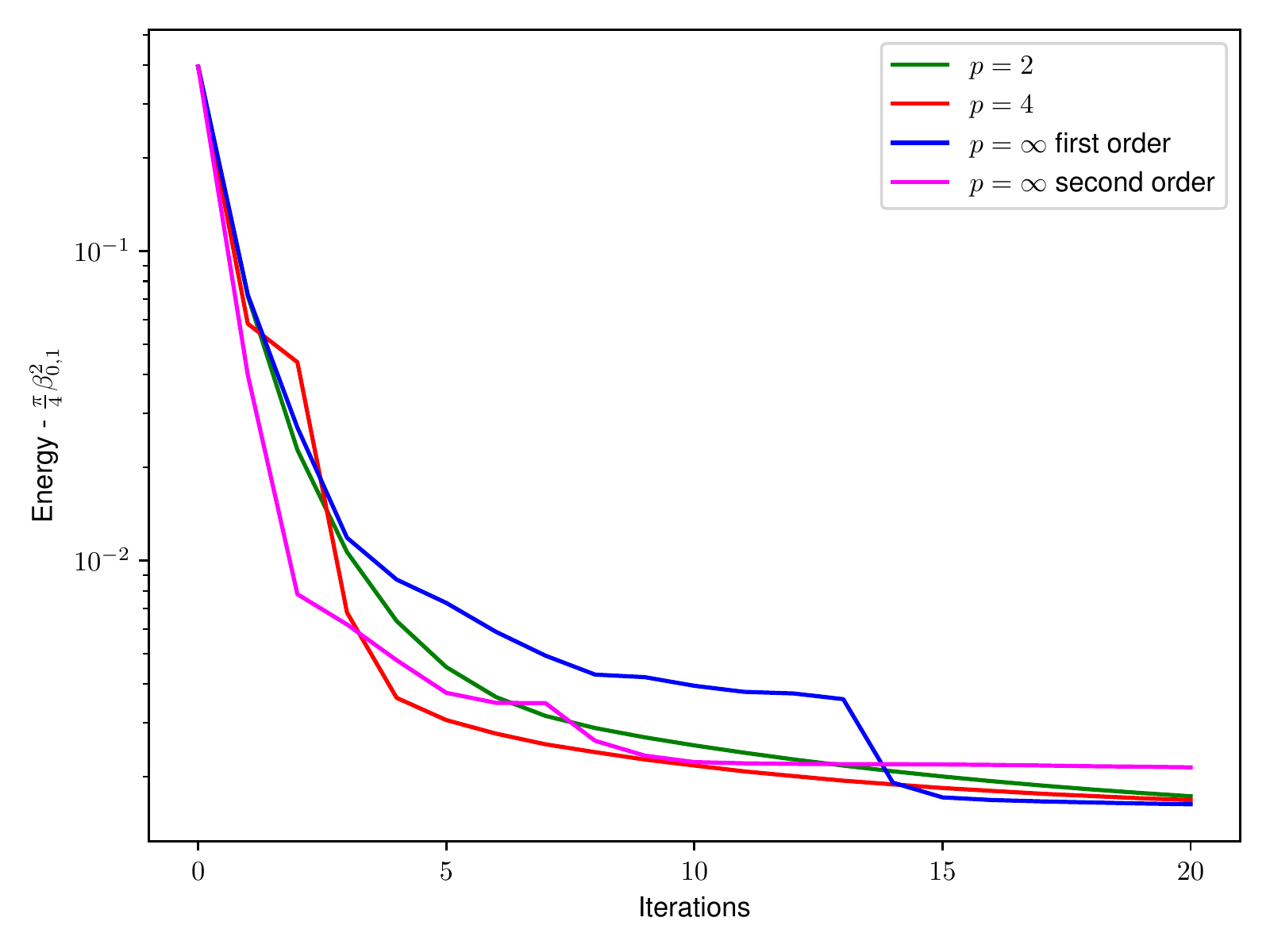}
    \caption{Graph of the energy for the iterates in the \emph{Eigenvalue} experiment in Section \ref{sec:experiment:eval:Experiment1}.
    We see that all the methods are performing roughly the same.}
    \label{fig:experiment:eval:Experiment1:graphs}
    \end{subfigure}
    \caption{Initial mesh and graph of the energy for the experiment in Section \ref{sec:experiment:eval:Experiment1}.}
\end{figure}
In Figure \ref{fig:experiment:eval:Experiment1:meshes}, the meshes for the final domains $\Omega$ for each of the methods are given.
\begin{figure}
    \vspace{-.5\linewidth} 
    \centering
    \adjustbox{trim = {0\width} {0.5\height} {0.5\width} {-1\height}, clip ,width = .25\linewidth }{
    \includegraphics[width=.35\linewidth]{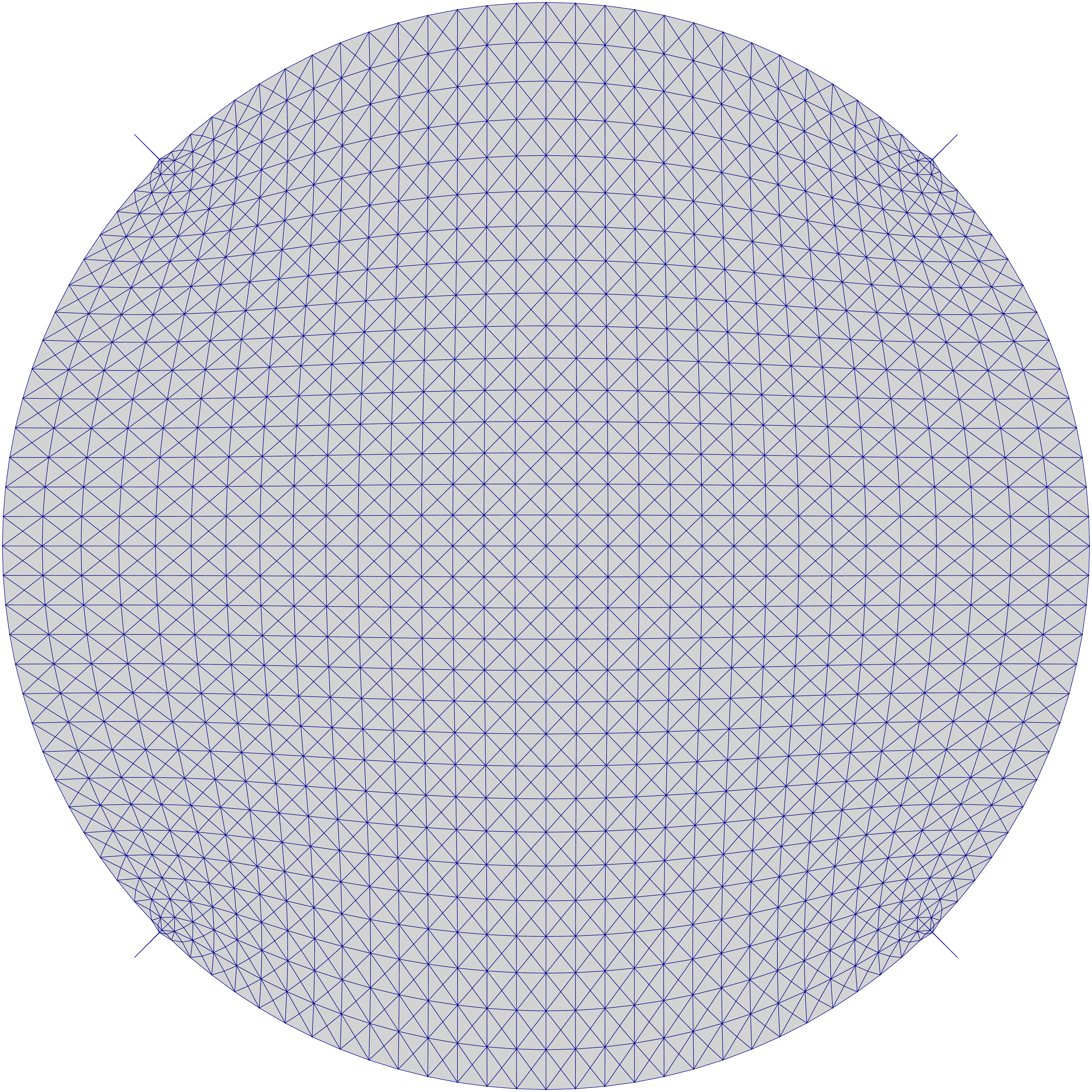}}
    \adjustbox{trim = {0.5\width} {0.5\height} {0\width} {-1\height}, clip ,width = .25\linewidth }{
    \includegraphics[width=.35\linewidth]{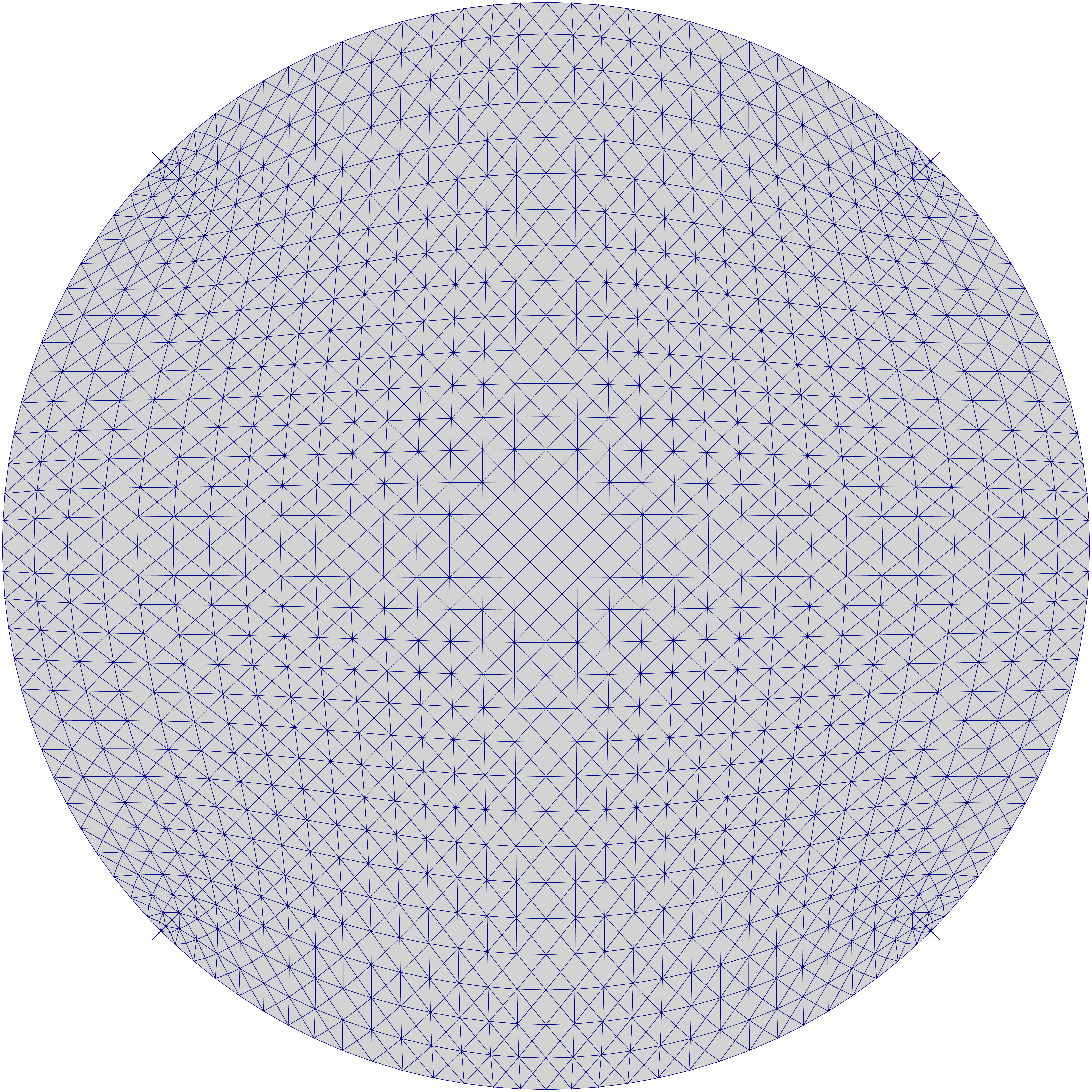}}
    \\
    \adjustbox{trim = {0\width} {0\height} {0.5\width} {.5\height}, clip ,width = .25\linewidth }{
    \includegraphics[width=.35\linewidth]{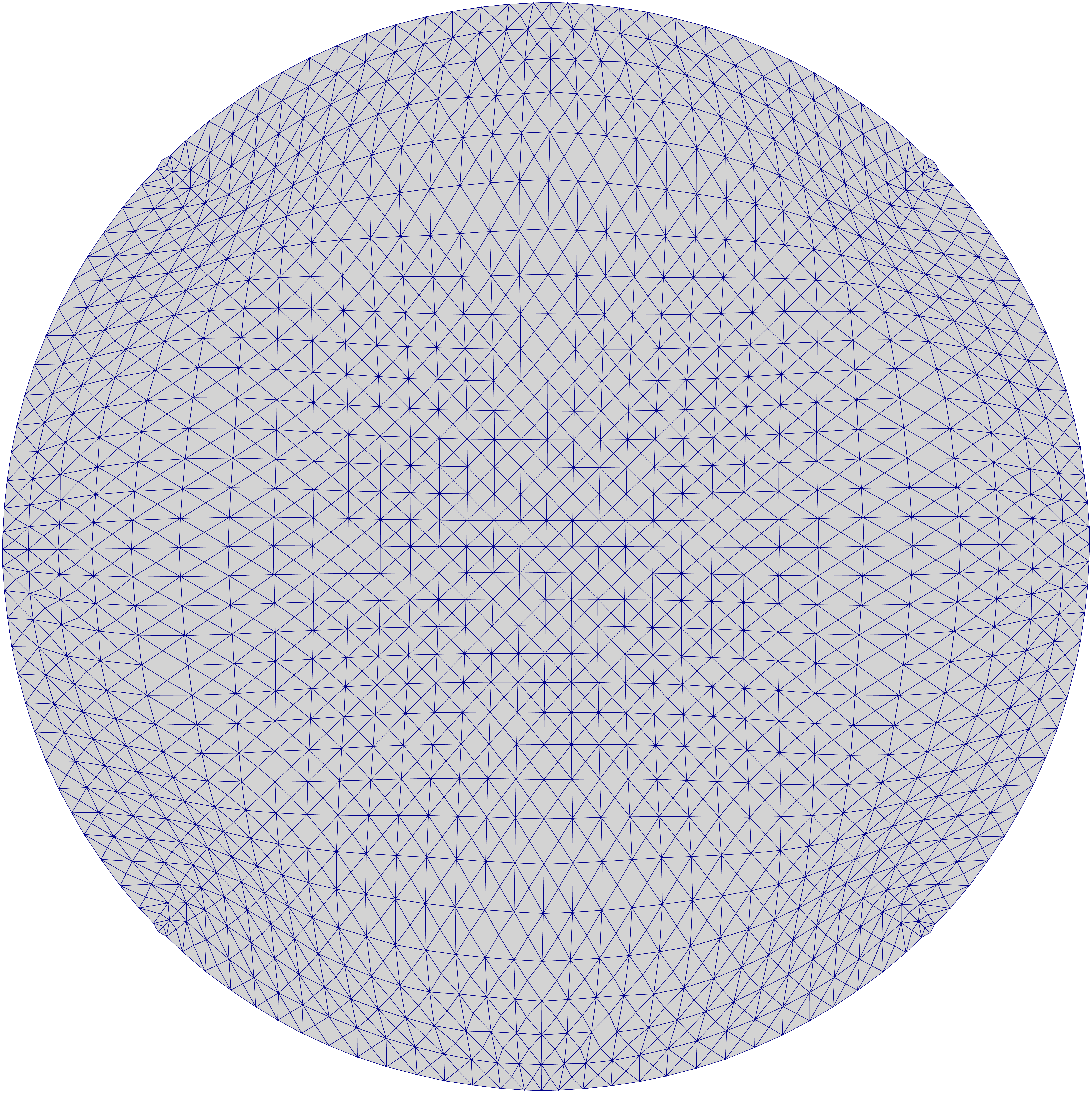}}
    \adjustbox{trim = {0.5\width} {0\height} {0\width} {.5\height}, clip ,width = .25\linewidth }{
    \includegraphics[width=.35\linewidth]{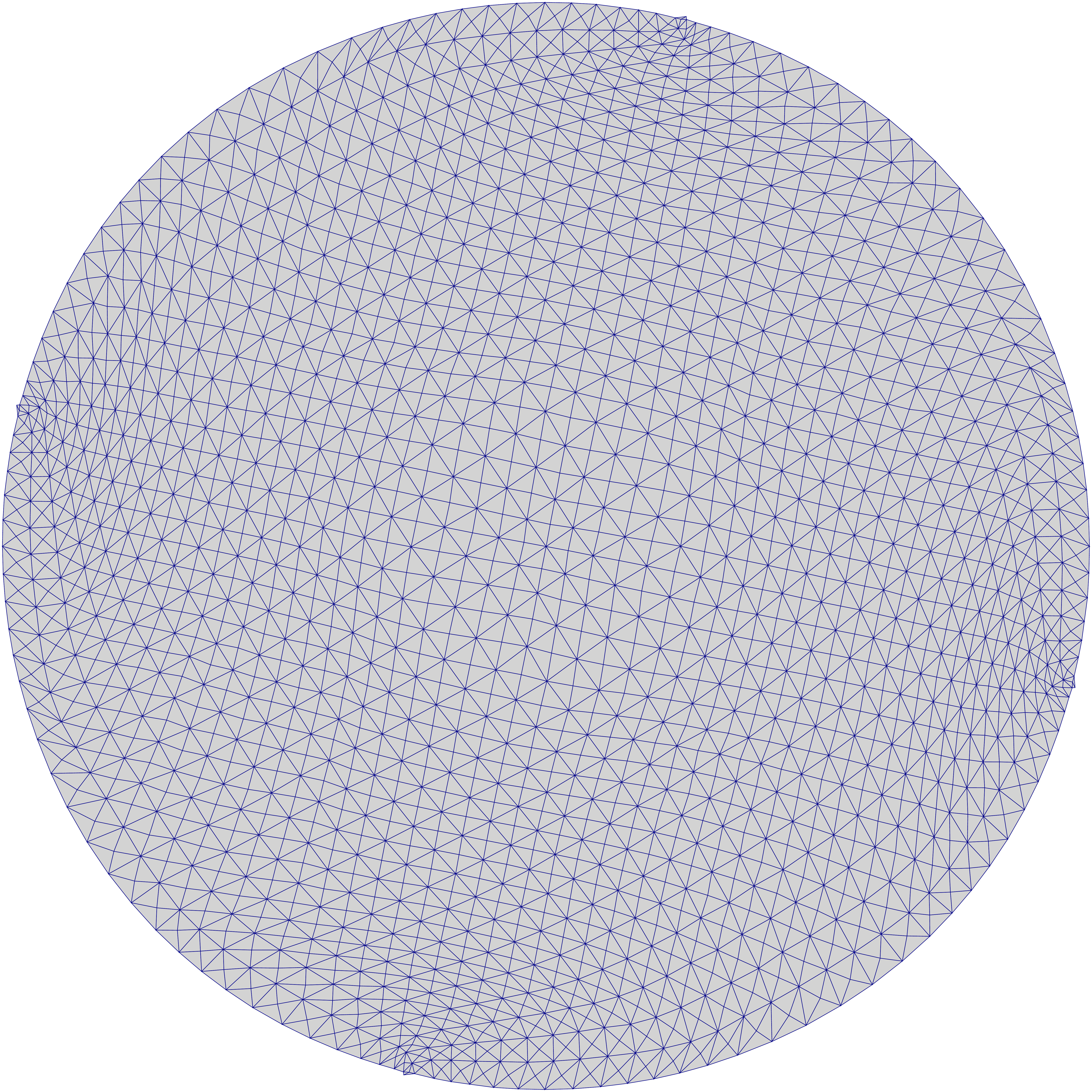}}
    \caption{The meshes of $\Omega$ for the final domains produced in the \emph{Eigenvalue} experiment in Section \ref{sec:experiment:eval:Experiment1}: top left to bottom, $p=2,4,\infty$ and second order.
    Due to symmetry of the result, we show only a quarter of each mesh.
    We see that spikes appear for $p=2$ and $p=4$ where the corners were for the original shape.
    The mesh for $p=\infty$ appears rather regular.
    We comment that the Newton method has lead to the mesh rotating when compared to its original orientation.}
    \label{fig:experiment:eval:Experiment1:meshes}
\end{figure}
Let us further elaborate on Figure \ref{fig:experiment:eval:Experiment1:meshes}, particularly the mesh produced by the infinity method.
We note that the mesh in the centre for the infinity method appears less regular than the other methods.
This appears to occur due to a somewhat undesirable approximation of the shape derivative, which should be concentrated at the boundary.
The undesirable approximation is due to the FEM approximation giving interior contributions to the shape derivatives, so-called \emph{spurious} contributions, in the limit of the mesh becoming infinitely fine this should disappear.
Investigation suggests that these are more apparent in the infinity method but disappear when using shape derivatives with only boundary contributions.
This sort of expert knowledge is often exploited in the literature and appeared in e.g. \cite{SSW16} to 'delete' contributions to the shape derivative from nodes on the interior.

\section*{Conclusion}
In this work we have introduced two new frameworks in which to perform shape optimisation.
These methods are more applicable to physical scenarios than the previous works involving $W^{1,\infty}$ as they are not restricted to star-shaped domains.
While our examples had star-shaped final domains, our framework allows for more general shapes and is more readily applicable to industrial problems.
From the experiments, it was seen that our introduced first order method did not necessarily perform well energetically, however the meshes the method produced are regular.
The second order method we introduced performs well both energetically and in terms of the regularity of the mesh, a downside however is the need to tune the damping parameter $t$.

\section*{Acknowledgements}
The authors wish to extend their gratitude to Andreas Dedner for providing useful insight into the use of the Python bindings for DUNE.
This work is part of the project P8 of the German Research Foundation Priority Programme 1962, whose support is gratefully acknowledged by the second and the third author.
P.J.H acknowledges the support of EPSRC (grant EP/W005840/1).

\appendix

\section{Calculations for the second shape derivatives}\label{Appendix}
Here we collect the derivatives of $J$ and $e$ for the examples in Section \ref{sec:Applications}.
These derivatives are particularly useful for the calculation of the second shape derivative.
All of the maps and functions we consider are sufficiently smooth that we may exchange the order of differentiation.
It will be convenient to define
\begin{equation*}
    \detDashDash[V,W]:= \left(\Div(V)\Div(W) - \Tr(DVDW)\right).
\end{equation*}
\subsection{Derivatives for the energy functionals}\label{app:JDerivatives}
Let us consider $J$ as in \eqref{eq:CostFunctionalJ}, that is $J(V,y) := \int_{\hat \Omega} j(\id + V ,y)\det(\iden + DV)$
for some fixed $\hat \Omega \Subset D$.
When $j$ is twice differentiable, it holds that
\begin{align*}
    J_V(0,y)[V]
    =&
    \int_{\hat \Omega} \Div V j(\cdot,y) + j_x(\cdot,y) \cdot V,
\quad
    J_y(0,y)[\eta]
    =
    \int_{\hat \Omega} j_y(\cdot, y)\, \eta,
\\
    J_{yV}(0, y)[\eta,V]
    =&
    \int_{\hat \Omega} \Div V j_y(\cdot, y) \, \eta + j_{yx}(\cdot, y) \cdot V \eta,
\quad
    J_{yy}(0, y)[\eta,\xi]
    =
    \int_{\hat \Omega} j_{yy}(\cdot,y)\eta \xi,
\\
    J_{VV}(0,y)[V,W]
    =&
    \int_{\hat \Omega} \detDashDash[V,W] j(\cdot,y)
    +
    \Div V j_x(\cdot,y) \cdot W
    + \Div W j_x(\cdot,y) \cdot V
    +
    j_{xx}(\cdot,y)V \cdot W.
\end{align*}



\subsection{Derivatives for PDE constraints}
Here, we collect the derivatives for the maps $e$ which appear in Sections \ref{sec:application:poisson}, \ref{sec:application:coupledPoisson}, and \ref{sec:application:evalue}.
Recall that we define $A(V):= (\iden + DV)^{-1} (\iden +DV)^{-T}$ and $\A[V]:= \iden \Div V - DV - DV^T$.
We furthermore define
\begin{align*}
    \ADashDash[V,W]:=
    &   \detDashDash[V,W]\iden -\Div(V) DW - \Div(V) DW^T
    \\
    &-  \Div(W)DV + \left( DWDV + DVDW \right) + DV DW^T
    \\
    &-  \Div(W) DV^T + DW DV^T + (DWDV+DVDW)^T.
\end{align*}

\subsubsection{Derivatives for the Poisson Problem}\label{app:Poisson:Derivatives}
In the case that
\begin{equation*}
   \langle e(V,y), p \rangle = \int_{\hat \Omega} \left( A(V) \nabla y \cdot \nabla p -  F \circ (\id +V) p \right) \,  \det(\iden+ DV)
\end{equation*}
as in Section \ref{sec:application:poisson}, then it holds that
\begin{align*}
    \langle e_V(0,y)[V],p \rangle
    =&
    \int_{\hat \Omega} \A[V]\nabla y \cdot \nabla p - \Div (V F) p,
\quad
    \langle e_y(0,y)[\eta],p \rangle
    =
    \int_{\hat \Omega} \nabla \eta\cdot \nabla p,
\\
    \langle e_{yV}(0,y)[\eta,V],p \rangle
    =&
    \int_{\hat \Omega} \A[V]\nabla \eta \cdot \nabla p,
\quad
    \langle e_{yy}(0,y)[\eta,\xi],p \rangle
    =
    0,
\\
    \langle e_{VV}(0,y)[V,W],p \rangle
    =&
    \int_{\hat \Omega} \ADashDash[V,W] \nabla y \cdot \nabla p
    - \detDashDash[V,W] F p
    - \Div(V) p W \cdot \nabla F
    \\  \nonumber
    &- \Div(W) p V \cdot \nabla F
    - W\otimes V : pD^2 F .
\end{align*}

\subsubsection{Derivatives for the coupled Poisson Problem}
\label{app:coupledPoisson:Derivatives}
In the case that
\begin{equation*}\begin{split}
    \langle e(V,y ),p \rangle
    =&
    \int_{\hat \Omega} \left(A(V) \nabla y_1 \cdot \nabla p_2 - y_2 p_2\right) \det(\iden +DV)
    \\
    &+
    \left( A(V) \nabla y_2 \cdot \nabla p_1  - p_1 F\circ (\id + V) \right) \det(\iden +DV),
\end{split}\end{equation*}
as in Section \ref{sec:application:coupledPoisson}, it holds that
\begin{align*}
    \langle e_V(0,y)[V],p \rangle 
    =&
    \int_{\hat \Omega} \A[V] \nabla y_1 \cdot \nabla p_2 - \Div V y_2 p_2
    +
    \A[V] \nabla y_2 \cdot \nabla p_1 - \Div (V F) p_1,
    \\
    \langle e_y (0,y)[\eta],p \rangle 
    =&
    \int_{\hat \Omega} \nabla \eta_1 \cdot \nabla p_2 - \eta_2 p_2
    +
    \nabla \eta_2 \cdot \nabla p_1,
    \\
    \langle e_{yV}(0,y) [\eta,V],p \rangle
    =&
    \int_{\hat \Omega} \A[V] \nabla \eta_1 \cdot \nabla p_2 - \Div V \eta_2 p_1
    +
    \A[V] \nabla \eta_2 \cdot \nabla p_2,
    \\
    \langle e_{yy}(0,y) [\eta,\xi],p \rangle
    =&
    0,
    \\
    \langle e_{VV}(0,y) [V,W],p \rangle
    =&
    \int_{\hat \Omega} \ADashDash[V,W] \nabla y_1 \cdot \nabla p_2
                -\detDashDash[V,W] y_2 p_2
                \\  \nonumber
                &+\ADashDash[V,W] \nabla y_2 \cdot \nabla p_1
                - \detDashDash[V,W] F p_1
                \\  \nonumber
                &- \Div(V) W \cdot \nabla F p_1 - \Div(W) V \cdot \nabla F p_1 - W\otimes V : D^2 F p_1.
\end{align*}

\subsubsection{Derivatives for the Eigenvalue Problem}\label{app:eValue:Derivatives}
In the case that
\begin{equation*}\begin{split}
    \langle e(V,(z,\lambda) ),(q,\mu) \rangle
    =&
    \int_{\hat \Omega} \left( A(V)\nabla z \cdot \nabla q - \lambda z q \right) \det(\iden +DV)
    \\
    &+ \mu\left(1-\int_{\hat \Omega} \det (\iden + DV)z^2 \right),
\end{split}\end{equation*}
as in section \ref{sec:application:evalue}, it holds that
\begin{align*}
    \langle e_V(0,(z,\lambda))[V],(q,\mu) \rangle 
    =&
    \int_{\hat \Omega} \A[V] \nabla z \cdot \nabla q - \lambda \Div V z q - \mu \Div V z^2,
    \\
    \langle e_y (0,(z,\lambda))[(\eta,\tilde \eta)],(q,\mu) \rangle 
    =&
    \int_{\hat \Omega} \nabla \eta \cdot \nabla q - \lambda \eta q - \tilde \eta z q
    -
    \mu z \eta,
\end{align*}
\begin{align*}
    \langle e_{Vy}(0,(z,\lambda)) [V,(\eta,\tilde{\eta})],(q,\mu) \rangle
    =&
    \int_{\hat \Omega} \A[V]\nabla q \cdot \nabla \eta
    - \Div V \lambda \eta q - \Div V \tilde{\eta} z q
    \\  \nonumber
    &- 2\mu \int_{\hat \Omega} \Div V z \eta ,
    \\
    \langle e_{yy}(0,(z,\lambda)) [(\eta,\tilde{\eta}),(\zeta,\tilde{\zeta)}],(q,\mu) \rangle
    =&
    \int_{\hat \Omega} -\tilde{\zeta} \eta q - \tilde{\eta} \zeta q - \mu \zeta \eta,
\end{align*}
\begin{align*}
    \langle e_{VV}(0,(z,\lambda)) [V,W],(q,\mu) \rangle
    =&
    \int_{\hat \Omega} \ADashDash[V,W] \nabla z \cdot \nabla q
                - \detDashDash[V,W]\left( \lambda z q - \mu z^2\right).
\end{align*}

For the energy, $J(V,(z,\lambda)) = \lambda$, it {is clear that only the derivative in the second component is non-vanishing and one has that}
\begin{align*}
    J_y(0,(z,\lambda)[(\eta,\tilde{\eta})] =& \tilde{\eta}.
\end{align*}

\printbibliography

\end{document}